\newtheorem{theorem}{Theorem}[section]
\newtheorem{lemma}[theorem]{Lemma}
\newtheorem{proposition}[theorem]{Proposition}
\newtheorem{corollary}[theorem]{Corollary}
\newtheorem{definition}[theorem]{Definition}
\newtheorem{remark}[theorem]{Remark}
\newcommand{\R}{\mathbb R}%
\newcommand{\C}{\mathbb C}%
\newcommand{\N}{\mathbb N}%
\numberwithin{equation}{section}
\begin{document}
\title[Superharmonic functions on harmonic manifolds]{Boundary exceptional sets for radial limits of superharmonic functions on non-positively curved harmonic manifolds of purely exponential volume growth}
\author[Utsav Dewan]{Utsav Dewan}
\address{Stat-Math Unit, Indian Statistical Institute, 203 B. T. Rd., Kolkata 700108, India}
\email{utsav\_r@isical.ac.in}
\subjclass[2020]{Primary 31C05; Secondary 31C15, 53C20.} 
\keywords{Superharmonic functions, Boundary behavior, Hausdorff dimensions, Harmonic manifolds}

\begin{abstract} 
By classical Fatou type theorems in various setups, it is well-known that positive harmonic functions have non-tangential limit at almost every point on the boundary. In this paper, in the setting of non-positively curved Harmonic manifolds of purely exponential volume growth, we are interested in the size of the exceptional sets of points on the boundary at infinity, where a suitable function blows up faster than a prescribed growth rate, along radial geodesic rays. For Poisson integrals of complex measures, we obtain a sharp bound on the Hausdorff dimension of the exceptional sets, in terms of the mean curvature of horospheres and the parameter of the growth rate. In the case of the Green potentials, we obtain similar upper bounds and also construct Green potentials that blow up faster than a prescribed rate on lower Hausdorff dimensional realizable sets. So we get a gap in the corresponding Hausdorff dimensions due to the assumption of variable pinched non-positive sectional curvature. We also obtain a Riesz decomposition theorem for subharmonic functions. Combining the above results we get our main result concerning Hausdorff dimensions of the exceptional sets of positive superharmonic functions.
\end{abstract}
\maketitle
\tableofcontents
\section{Introduction}
The boundary behavior of harmonic functions is one of the most well-studied topics in classical potential theory. The celebrated theorem of Fatou tells us that any positive harmonic function on the unit disk admits a non-tangential limit at almost all points on the boundary. This fact was generalized to rank one Riemannian symmetric spaces of non-compact type (for admissible limits) by Kor\'anyi \cite{Koranyi}  and then to Hadamard manifolds of pinched negative curvature by Anderson and Schoen \cite{AS}. 

\medskip

Then a natural question to ask is: how does a positive harmonic function behave along radial geodesic rays, on the complement of this full measure subset of the boundary. More precisely, how quickly can a positive harmonic function grow or how large can the exceptional set (in the boundary) be where the positive harmonic function blows up faster than a prescribed rate. These are the questions which we will address in this note, in the setting of non-positively curved Harmonic manifolds of purely exponential volume growth.

\medskip

Throughout this article, all Riemannian manifolds are assumed to be complete, simply connected and of dimension $n \ge 3$. A Harmonic manifold is a Riemannian manifold $X$ such that for any point $x \in X$, there exists a non-constant harmonic function on a punctured neighbourhood of $x$ which is radial around $x$, that is, only depends on the geodesic distance from $x$. By purely exponential volume growth, we mean that there exist constants $C > 1,\: h > 0$ such that the volume of metric balls $B(x, R)$ with center $x \in X$ and radius $R>1$, satisfies the asymptotics:
\begin{equation*}
\frac{1}{C} e^{hR} \le vol(B(x,R)) \le C e^{hR} \:.
\end{equation*}
It turns out that in our case, the constant $h > 0$ agrees with the mean curvature of the horospheres. It is well-known that the sectional curvature of Harmonic manifolds are bounded below (see \cite{Besse, RR}), that is, there exists $b>0$ such that $K_X \ge -b^2$\:.

\medskip

The class of non-positively curved Harmonic manifolds of purely exponential volume growth includes all the known examples of non-compact non-flat Harmonic manifolds: the rank one Riemannian symmetric spaces of non-compact type and the Damek-Ricci spaces.
\medskip

Let $X$ be a non-positively curved Harmonic manifold of purely exponential volume growth with mean curvature of horospheres $h>0$. We fix an origin $o$ in $X$ and let $\partial X$ be the boundary at infinity. Now for a $\xi \in \partial X$, let $\gamma_\xi$ denote the unit-speed geodesic ray such that $\gamma_\xi(0)=o,\:\gamma_\xi(+\infty)=\xi$ and $d(o,\gamma_\xi(t))=t$ for all $t \in (0,+\infty)$. Then the Poisson kernel of $X$ is given by,
 \begin{equation} \label{poisson_kernel}
 P(x,\xi) = e^{-hB_{\xi}(x)} \:,\text{ for all } x \in X,\: \xi \in \partial X,
 \end{equation}
 where $B_{\xi}(x)$ is the Busemann function, defined by
 \begin{equation} \label{busemann}
 B_{\xi}(x) = \displaystyle\lim_{t \to \infty} \left(d\left(x,\gamma_\xi(t)\right) - d\left(o,\gamma_\xi(t)\right)\right) \:.
 \end{equation} 

\medskip

The Martin representation formula \cite[Corollary 5.13]{KL} asserts that the positive harmonic functions on $X$ are given by Poisson  integrals of finite, positive Borel measures on $\partial X$. More generally, for a complex measure $\mu$ on $\partial X$, let $u=P[\mu]$ be the Poisson integral of $\mu$. Then for any $\xi \in \partial X$ and any $t \in (0,+\infty)$, by (\ref{poisson_kernel}), (\ref{busemann}) and the triangle inequality, we have
\begin{equation} \label{bounding_poisson}
|u(\gamma_\xi(t))| =\left|\int_{\partial X} P(\gamma_\xi(t),\eta)\:d\mu(\eta)\right| \le e^{ht}\: |\mu|(\partial X)\:,
\end{equation}
where $|\mu|(\partial X)$ is the total variation of $\mu$.

\medskip

Then (\ref{bounding_poisson}) motivates us to consider for $\beta \in [0,h]$ and a complex-valued function $u$ on $X$, the following sets
\begin{equation} \label{Ebeta}
E_\beta(u) := \left\{\xi \in \partial X : \displaystyle\limsup_{t \to +\infty} e^{-\beta t} \left|u\left(\gamma_{\xi}(t)\right)\right| > 0\right\}
\end{equation}
and
\begin{equation} \label{Ebetainf}
E^\infty_\beta(u) := \left\{\xi \in \partial X : \displaystyle\limsup_{t \to +\infty} e^{-\beta t} \left|u\left(\gamma_{\xi}(t)\right)\right| =+\infty\right\} \:.
\end{equation}

\medskip

When $K_X \le -1$, there is a natural metric called the visual metric, denoted by $\rho$ on $\partial X$. But in the generality of our situation, $\rho$ only defines a quasi-metric. However for $s \in (0,s_0)$, where $-s^2_0$ is the asymptotic upper curvature bound of $X$, one has a metric on $\partial X$, say $\rho_s$, bi-Lipschitz to $\rho^s$. In all our results, the Hausdorff dimensions or Hausdorff outer measures are with respect to $\rho_s$.

\medskip

The reader is referred to section $2$ for  any unexplained notations and terminologies.

\medskip

Our first result gives an upper bound on the Hausdorff dimensions of the sets defined in (\ref{Ebeta}) and (\ref{Ebetainf}) for Poisson integrals of complex measures:
\begin{theorem} \label{poisson_thm}
Let $X$ be a non-positively curved Harmonic manifold of purely exponential volume growth with mean curvature of horospheres $h>0$. Assume $\beta \in [0,h]$ and $\mu$ to be a complex measure on $\partial X$. Then 
\begin{equation*}
dim_{\mathcal{H}} E_\beta(P[\mu]) \le (h-\beta)/s \:, \text{ and }
\mathcal{H}^{(h-\beta)/s}\left(E^\infty_\beta(P[\mu])\right) = 0 \:.
\end{equation*}
\end{theorem}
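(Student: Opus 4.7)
The plan is to reduce the radial growth of $P[\mu](\gamma_\xi(t))$ as $t\to\infty$ to the upper $\alpha$-density of the variation $|\mu|$ at $\xi$, with $\alpha=(h-\beta)/s$, and then to invoke a Rogers--Taylor type covering argument in the metric space $(\partial X,\rho_s)$. The key analytic ingredient I would first prove is the pointwise Poisson kernel bound
\[
P(\gamma_\xi(t),\eta)\;\le\;\frac{C\,e^{ht}}{\bigl(1+e^{st}\rho_s(\xi,\eta)\bigr)^{2h/s}},
\]
uniformly in $\xi,\eta\in\partial X$ and $t>0$. This should follow from $P(x,\eta)=e^{-hB_\eta(x)}$ together with the standard asymptotic $B_\eta(\gamma_\xi(t))\ge -t+\tfrac{2}{s}\log^+\!\bigl(e^{st}\rho_s(\xi,\eta)\bigr)+O(1)$ for the Busemann function, part of the visual/quasimetric framework reviewed in Section~$2$.

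Granted this estimate, I would fix $\xi\in\partial X$ and suppose $|\mu|(B_s(\xi,r))\le Mr^\alpha$ for all $r<r_0$. Decomposing $\partial X$ into $B_s(\xi,e^{-st})$, the dyadic annuli $A_k=\{2^ke^{-st}\le\rho_s(\xi,\cdot)<2^{k+1}e^{-st}\}$ (for those $k$ with $2^ke^{-st}<r_0$), and the complement of $B_s(\xi,r_0)$, the Poisson bound together with the density assumption yields
\[
\bigl|P[\mu](\gamma_\xi(t))\bigr|\;\le\;CM\,e^{(h-s\alpha)t}\sum_{k\ge 0}2^{k(\alpha-2h/s)}\;+\;C\,r_0^{-2h/s}\,|\mu|(\partial X)\,e^{-ht}.
\]
Since $\alpha-2h/s=-(h+\beta)/s<0$ the geometric series converges, and $h-s\alpha=\beta$; the tail term is negligible after dividing by $e^{\beta t}$. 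Letting $M$ tend to the upper $\alpha$-density at $\xi$, I would conclude
\[
\limsup_{t\to\infty}e^{-\beta t}\bigl|P[\mu](\gamma_\xi(t))\bigr|\;\le\;C\,\limsup_{r\to 0}\frac{|\mu|(B_s(\xi,r))}{r^\alpha}\,,
\]
which gives the inclusions $E_\beta(P[\mu])\subseteq \bigl\{\xi:\limsup_{r\to 0}|\mu|(B_s(\xi,r))/r^\alpha>0\bigr\}$ and $E^\infty_\beta(P[\mu])\subseteq \bigl\{\xi:\limsup_{r\to 0}|\mu|(B_s(\xi,r))/r^\alpha=+\infty\bigr\}$.

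The final step is the classical Frostman--Rogers--Taylor density estimate in $(\partial X,\rho_s)$: for a finite Borel measure $|\mu|$ and any $c>0$,
\[
\mathcal{H}^\alpha\!\left(\bigl\{\xi:\limsup_{r\to 0}|\mu|(B_s(\xi,r))/r^\alpha>c\bigr\}\right)\;\le\;\frac{C}{c}\,|\mu|(\partial X)\,,
\]
which I would prove by a Vitali $5r$-covering argument applied to an efficient cover by small balls witnessing the density condition. Sending $c\downarrow 0$ along a sequence yields $\dim_{\mathcal H}E_\beta(P[\mu])\le\alpha$, and sending $c\uparrow\infty$ gives $\mathcal{H}^\alpha(E^\infty_\beta(P[\mu]))=0$, which is the theorem. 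The hard part, I expect, is the sharp Poisson-kernel estimate: only a quasimetric is available on $\partial X$ in this generality, so the $O(1)$ error terms in the Busemann asymptotics must be tracked carefully through the comparison $\rho_s\asymp\rho^s$ and the change of scale $e^{st}\rho_s$, without the luxury of constant curvature. Once this estimate is in hand, the annular decomposition and covering arguments are routine.
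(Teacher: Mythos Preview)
Your proposal is correct and follows essentially the same route as the paper. The paper packages your pointwise kernel bound and annular decomposition into a single maximal-function estimate (Lemma~\ref{maximal_fn_lem} and Corollary~\ref{cor1}), carrying out the dyadic sum in the quasi-metric $\rho$ and only passing to $\rho_s$ at the Vitali step, whereas you work in $\rho_s$ throughout and phrase the conclusion as an upper-$\alpha$-density inequality; the underlying inputs---the identity $B_\eta(\gamma_\xi(t))=t-2(\gamma_\xi(t)\mid\eta)_o$, monotonicity of the Gromov product along geodesics, and the bi-Lipschitz comparison $\rho_s\asymp\rho^s$---and the final Vitali $5r$-covering argument are identical. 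Your expected ``hard part'' (the kernel estimate) is in fact routine once one has the Gromov-product monotonicity recorded in Section~2, and your $O(1)$ term is exactly $\frac{2}{s}\log C_2$ coming from~\eqref{metric_relation}.
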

In fact, the bounds in Theorem \ref{poisson_thm} are sharp. This is illustrated by the following result:
\begin{theorem} \label{poisson_sharp_thm}
Let $X$ be as in the statement of Theorem \ref{poisson_thm}. Assume $\beta \in [0,h)$ and $E \subset \partial X$ with $\mathcal{H}^{(h-\beta)/s}(E)=0$. Then there exists a non-negative integrable function $f$ on $\partial X$ (with respect to the visibility measure $\lambda_o$) such that $E \subset E^\infty_\beta\left(P[f]\right)$\:.
\end{theorem}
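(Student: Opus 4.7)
The strategy is the classical covering-plus-Poisson-mass construction, carried out in the quasi-metric $\rho_s$. Two geometric estimates will be used repeatedly. First, a \emph{Poisson concentration bound}: there exist constants $c_0, c_1 > 0$ such that
\[
P(\gamma_\xi(t), \eta) \ge c_1 e^{ht} \quad\text{whenever}\quad \rho_s(\xi,\eta) \le c_0 e^{-st},
\]
which follows by bounding the Busemann cocycle $B_\eta(\gamma_\xi(t))$ above by $-t+O(1)$ in that regime, using the fellow-travelling of the rays $\gamma_\xi,\gamma_\eta$ up to the divergence time $\approx -s^{-1}\log\rho_s(\xi,\eta)$. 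Second, the \emph{Ahlfors regularity} $\lambda_o(B_{\rho_s}(\xi,r)) \asymp r^{h/s}$ for small $r>0$, a consequence of the shadow lemma together with purely exponential volume growth. Both should be available from the preliminaries of Section~2.

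Using $\mathcal{H}^{(h-\beta)/s}(E) = 0$ together with $(h-\beta)/s > 0$ (from the hypothesis $\beta < h$), for each $k \in \N$ I pick a countable cover of $E$ by balls $B_{i,k} := B_{\rho_s}(\xi_{i,k}, r_{i,k})$ with $r_{i,k} < 2^{-k}$ and $\sum_i r_{i,k}^{(h-\beta)/s} \le 2^{-2k}$, and set
\[
f := \sum_{k=1}^{\infty} k \sum_{i} r_{i,k}^{-\beta/s}\, \mathbf{1}_{B_{i,k}}.
\]
Integrability follows from the upper Ahlfors bound:
\[
\int_{\partial X} f\, d\lambda_o \;\le\; C \sum_k k \sum_i r_{i,k}^{-\beta/s}\cdot r_{i,k}^{h/s} \;=\; C\sum_k k \sum_i r_{i,k}^{(h-\beta)/s} \;\le\; C\sum_k k\, 2^{-2k} < \infty.
\]

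To verify $E \subset E^\infty_\beta(P[f])$, fix $\xi\in E$ and $k\in\N$, choose any index $i_k$ with $\xi\in B_{i_k,k}$, and set $t_k := -s^{-1}\log(r_{i_k,k}/c_0)$. For every $\eta\in B_{i_k,k}$, after a harmless rescaling of $c_0$ the quasi-triangle inequality gives $\rho_s(\xi,\eta)\le c_0 e^{-st_k}$, so the concentration estimate yields $P(\gamma_\xi(t_k),\eta)\ge c_1 e^{ht_k}$. Combining with the lower Ahlfors bound and the monotonicity of the Poisson integral on non-negative integrands,
\[
P[f](\gamma_\xi(t_k)) \;\ge\; k\, r_{i_k,k}^{-\beta/s}\int_{B_{i_k,k}} P(\gamma_\xi(t_k),\eta)\, d\lambda_o(\eta) \;\gtrsim\; k\, r_{i_k,k}^{-\beta/s}\, e^{ht_k}\, r_{i_k,k}^{h/s} \;\asymp\; k\, r_{i_k,k}^{-\beta/s},
\]
using $e^{ht_k}\asymp r_{i_k,k}^{-h/s}$. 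Multiplying by $e^{-\beta t_k}\asymp r_{i_k,k}^{\beta/s}$ gives $e^{-\beta t_k}\, P[f](\gamma_\xi(t_k)) \gtrsim k$, and since $r_{i_k,k}\le 2^{-k}$ forces $t_k\to\infty$, letting $k\to\infty$ yields the desired divergence.

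The main obstacle is establishing the Poisson concentration estimate with uniform constants in a setting of only variable pinched non-positive curvature, where the clean CAT$(-1)$ identities relating the Busemann cocycle to the Gromov product on $\partial X$ are not available; this is precisely where the snow-flake parameter $s$ enters and must be controlled by the asymptotic upper curvature bound $-s_0^2$.
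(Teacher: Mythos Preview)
Your construction is essentially the paper's: the same covering-plus-weighted-indicator function $f=\sum_{k,i} k\,r_{i,k}^{-\beta/s}\chi_{B_{i,k}}$, the same integrability check via the upper visual-measure bound, and the same conclusion $e^{-\beta t_k}P[f](\gamma_\xi(t_k))\gtrsim k$ along a sequence $t_k\to\infty$. The one substantive difference is how you obtain the key lower bound on $P[\chi_B](\gamma_\xi(t_k))$. You argue directly, combining a pointwise Poisson-kernel concentration estimate with the \emph{lower} Ahlfors bound $\lambda_o(B_{\rho_s}(\xi,r))\gtrsim r^{h/s}$. The paper instead uses Corollary~\ref{cor2}: since $P[\mathbf{1}]\equiv 1$, it suffices to bound $P[\mathbf{1}-\chi_B]$ from above, and that complementary estimate (via Lemma~\ref{maximal_fn_lem}) requires only the \emph{upper} visual-measure estimate~(\ref{visual_measure_estimate}). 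Your route works, but the lower Ahlfors bound is not among the preliminaries of Section~2; it is true in this setting (it follows from the reverse inclusion in the shadow lemma together with purely exponential volume growth) but would need a separate argument. The paper's complementary trick buys exactly this economy. A minor point: the paper uses doubled balls $2\mathscr{B}^{(m,j)}_s$ in the definition of $f$ so that $\mathscr{B}_s(\xi,r_m)\subset 2\mathscr{B}^{(m,j_m)}_s$ whenever $\xi\in\mathscr{B}^{(m,j_m)}_s$; your ``harmless rescaling of $c_0$'' serves the same purpose, and is legitimate since $\rho_s$ is a genuine metric.
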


In the classical Euclidean setting, analogues of Theorem \ref{poisson_thm} were obtained by Armitage \cite[Theorem 4 with Corollary of Theorem 2]{A} for the half-space and by Bayart-Heurteaux \cite[Theorem 1 or 3]{BH} for the unit ball. In the case of $\mathbb H^n(-1)$, the $n$-dimensional real Hyperbolic ball with constant sectional curvature equal to $-1$, analogues of Theorems \ref{poisson_thm} and \ref{poisson_sharp_thm} were recently obtained by Hirata \cite[Theorems 3 and 5]{H}.

\medskip

Now in $\mathbb H^n(-1)$, let $\mu$ be a non-negative Borel measure such that its Green potential $G[\mu]$ is well-defined. Then $G[\mu]$ has radial limit $0$ at almost all points on the boundary, whereas its boundary behavior along other non-tangential directions need not be nice \cite[Theorem 9.4.1]{St}. Similar results for the unit ball in $\C^n$ can be found in the works of Ullrich \cite{U}. These results regarding well-behaved radial limits of Green potentials on a full measure subset of the boundary intrigue us to consider the same problem of exceptional sets for Green potentials. Then one notes that Green potentials are just special examples of positive superharmonic functions. Finally motivated by \cite{H}, we endeavour to obtain results similar to Theorems \ref{poisson_thm} and \ref{poisson_sharp_thm} for the class of positive superharmonic functions.

\medskip

As a first step of analyzing subharmonic (or superharmonic) functions, we obtain their Riesz decomposition, which may be a result of independent interest and seems to be new even for the case of Damek-Ricci spaces. For the relevant definitions in the following statement, the reader is referred to section $4$. 
\begin{theorem} \label{riesz_decom}
Let $X$ be as in the statement of Theorem \ref{poisson_thm}. Let $f$ be a subharmonic function on $X$ such that it has a harmonic majorant. Then
\begin{equation*}
f(x) = F_f(x) - \int_X G(x,y) d\mu_f(y) \:,\text{ for all } x \in X \:,
\end{equation*}
where $F_f$ and $\mu_f$ are the least harmonic majorant and the Riesz measure of $f$ respectively.
\end{theorem}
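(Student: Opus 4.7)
The plan is to follow the classical exhaustion argument from Euclidean potential theory, transcribed to the Riemannian setting of $X$. By local elliptic theory for the Laplace--Beltrami operator, the distributional Laplacian of any subharmonic function on $X$ is a positive Radon measure; this is the Riesz measure $\mu_f$. Fix an exhaustion $X = \bigcup_{n \ge 1} \Omega_n$ by relatively compact open sets $\Omega_n$ with smooth boundary and $\overline{\Omega_n} \subset \Omega_{n+1}$. Since $X$ is non-positively curved with purely exponential volume growth, the bottom of the $L^2$-spectrum of $-\Delta$ satisfies $\lambda_0(X) \ge h^2/4 > 0$, so $X$ is nonparabolic and admits a positive global Green function $G$; each $\Omega_n$ is Dirichlet-regular with its own Green function $G_n$, and by the standard exhaustion characterization $G_n(x,y) \nearrow G(x,y)$ pointwise off the diagonal.

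On each $\Omega_n$, the classical bounded-domain Riesz representation yields
\begin{equation*}
f(x) \;=\; H_n(x) - \int_{\Omega_n} G_n(x,y)\, d\mu_f(y), \qquad x \in \Omega_n,
\end{equation*}
where $H_n$ is the least harmonic majorant of $f$ on $\Omega_n$ (equivalently, the Perron--Wiener--Brelot solution on $\Omega_n$ with boundary data $f|_{\partial \Omega_n}$). This is verified by observing that $f(x) + \int_{\Omega_n} G_n(x,y)\, d\mu_f(y)$ has vanishing distributional Laplacian on $\Omega_n$, hence is harmonic there by Weyl's lemma, and agrees with $f$ at the boundary in the appropriate fine sense. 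To pass to the limit, note that $F_f$ is a harmonic majorant of $f$ on every $\Omega_n$, so minimality gives $H_n \le F_f$ on $\Omega_n$, and the same argument applied successively yields that $\{H_n\}$ is decreasing on any fixed $\Omega_m$ and bounded below by $f$. Harnack's monotone convergence theorem then provides a harmonic limit $H$ on $X$ with $f \le H \le F_f$; since $H$ itself is a harmonic majorant of $f$, minimality of $F_f$ forces $H = F_f$. Combining this decreasing convergence with the increasing convergence $G_n \nearrow G$ and applying the monotone convergence theorem to the positive measure $\mu_f$ gives
\begin{equation*}
\int_{\Omega_n} G_n(x,y)\, d\mu_f(y) \;\longrightarrow\; \int_X G(x,y)\, d\mu_f(y) \;=\; F_f(x) - f(x),
\end{equation*}
which establishes both the claimed decomposition and the finiteness of the right-hand integral.

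The main obstacle is not the algebraic structure of the proof, which is routine, but verifying the underlying potential-theoretic prerequisites on $X$: nonparabolicity together with existence of a global Green function enjoying the exhaustion property $G_n \nearrow G$, Dirichlet regularity of smooth precompact domains, and Harnack's convergence theorem for monotone sequences of harmonic functions. These follow respectively from the positive spectral gap induced by the mean curvature $h > 0$ of horospheres, classical elliptic theory for $\Delta$ on a smooth Riemannian manifold, and the Brelot axiomatic framework (which applies since $X$ carries the standard sheaf of harmonic functions associated to $\Delta$). A minor subtlety worth dispatching is the interpretation of $f|_{\partial \Omega_n}$ when $f$ is only upper semi-continuous, which is precisely why the PWB formulation, rather than the naive Dirichlet problem, is the right one to invoke.
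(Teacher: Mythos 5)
Your argument takes a genuinely different route from the paper's. You run the classical exhaustion method: fix a smooth exhaustion $\{\Omega_n\}$, invoke the bounded-domain Riesz representation $f = H_n - \int_{\Omega_n} G_n(\cdot,y)\,d\mu_f(y)$ on each $\Omega_n$, and let $n \to \infty$ using $G_n \nearrow G$ together with Harnack's monotone convergence theorem for the $H_n$. The paper instead avoids PWB theory and domain Green functions entirely: it first reduces to the case $f \le 0$ with least harmonic majorant $\equiv 0$ (via Corollary \ref{harmonic_majorant_cor}), then convolves $f$ against a telescoping difference $h_j = h^1_j - h^2_j$ of radial approximate identities, one concentrating at the origin and one escaping to infinity, exploiting that $v_j := -G*h_j$ is a radial $C^\infty_c$ function with $\Delta v_j = h_j$ (Lemma \ref{riesz_decom_lemma1}) and that the Laplacian commutes with translation of radial functions (Lemma \ref{laplacian_commutes_translation}). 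The paper's route is self-contained within the radial convolution calculus of harmonic manifolds, whereas yours imports resolutivity of upper semi-continuous boundary data and the increasing-exhaustion characterization of the minimal Green function as known inputs, which would need citation or verification in this Riemannian setting. Both work. A superfluous claim: you do not need the spectral gap $\lambda_0 \ge h^2/4$; the global Green function is already constructed in the preliminaries directly from the integrability of $1/A(r)$, and its minimality (hence the exhaustion property $G_n \nearrow G$) is all you really use.

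There is one slip worth correcting. You claim $\{H_n\}$ is decreasing on any fixed $\Omega_m$ and bounded below by $f$, but the monotonicity is in the opposite direction. For $\Omega_n \subset \Omega_{n+1}$, the restriction $H_{n+1}|_{\Omega_n}$ is harmonic on $\Omega_n$ and dominates $f$ there, so minimality of $H_n$ as the least harmonic majorant of $f$ on $\Omega_n$ forces $H_n \le H_{n+1}$ on $\Omega_n$; thus $\{H_n\}$ is increasing and bounded above by $F_f$. With this correction Harnack's theorem still applies, the limit $H$ is a harmonic majorant of $f$ satisfying $H \le F_f$, and minimality of $F_f$ forces $H = F_f$. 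The rest of your passage to the limit, in particular the monotone-convergence step for $\int_{\Omega_n} G_n(x,y)\,d\mu_f(y) \nearrow \int_X G(x,y)\,d\mu_f(y)$, then goes through unchanged.
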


Then Theorem \ref{riesz_decom} motivates us back to our problem of determining the size of exceptional sets for Green potentials. As the Riesz measure of a subharmonic (or superharmonic) function is a Radon measure, we are naturally interested to look at an analogue of Theorem \ref{poisson_thm} for Green potentials of Radon measures on $X$:
\begin{theorem} \label{Green_thm}
Let $X$ be a non-positively curved Harmonic manifold of purely exponential volume growth with mean curvature of horospheres $h>n-2$ and sectional curvature $K_X \ge -b^2$, for some $b > 0$. Let $\beta \in [0, h-n+2)$ and $\mu$ be a Radon measure on $X$ whose Green potential $G[\mu]$ is well-defined. Then for $b':=\max\{2b,1\}$, we have 
\begin{equation*}
dim_{\mathcal{H}}E_\beta\left(G[\mu]\right) \le b'\left(h-\beta\right)/s\:,\text{ and }
\mathcal{H}^{b'(h-\beta)/s}\left(E^\infty_\beta\left(G[\mu]\right)\right) =0 \:.
\end{equation*}
\end{theorem}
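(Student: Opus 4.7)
The idea is to reduce the question to a boundary covering argument using sharp Green function estimates, paralleling the strategy for Theorem \ref{poisson_thm} but adapted to the diagonal singularity of $G$. The first step is to establish the pointwise estimate
\begin{equation*}
G(x,y) \le C\, e^{-h d(x,y)} \min\bigl(1,\, d(x,y)^{-(n-2)}\bigr), \qquad x \ne y,
\end{equation*}
where the singular factor follows from standard interior elliptic theory applied to $\Delta G(\cdot,y) = -\delta_y$, and the exponential decay rate $h$ comes from spherical-function asymptotics on harmonic manifolds of purely exponential volume growth (the rate matches the mean curvature of horospheres). The hypothesis $h > n-2$ is precisely what makes the two factors consistent and ensures that $G[\mu]$ is locally finite for Radon $\mu$ for which it is well-defined.

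Next I would set up a shadow-size estimate. For $y \in X$ with $r := d(o,y)$, let $\xi_y := \gamma_y(+\infty)$ denote the endpoint of the geodesic from $o$ through $y$, and define the shadow $\Sigma(y) := \{\xi \in \partial X : d(\gamma_\xi(r), y) \le 1\}$. The lower sectional curvature bound $K_X \ge -b^2$ combined with the Rauch first comparison theorem implies that geodesic rays emanating from $o$ diverge at most at the rate $e^{bt}$, so $\Sigma(y)$ subtends an angle $\lesssim e^{-br}$ at $o$. Converting the angular metric to the visual metric via the asymptotic upper curvature bound $-s_0^2$ (so that $\rho \asymp \theta^{1/s_0}$ for small angles $\theta$, and then $\rho_s \asymp \rho^s$ by assumption) gives
\begin{equation*}
\mathrm{diam}_{\rho_s}\bigl(\Sigma(y)\bigr) \le C\, e^{-s r / b'},
\end{equation*}
where $b' = \max\{2b,1\}$ absorbs the comparison constants and also handles the degenerate regime $b \le 1/2$.

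With these ingredients in hand, for any $\xi \in E^\infty_\beta(G[\mu])$ there exists a sequence $t_n \to +\infty$ with $G[\mu](\gamma_\xi(t_n)) > n\, e^{\beta t_n}$. I would decompose $X$ into Busemann shells relative to $\xi$, identify the dominant contributing scale in each shell by balancing the Green function decay against the mass distribution, and assemble the corresponding shadows into a $\rho_s$-cover of $E^\infty_\beta(G[\mu])$. A Vitali-type selection then yields a cover whose $\alpha$-content with $\alpha = b'(h-\beta)/s$ is arbitrarily small, proving $\mathcal{H}^{b'(h-\beta)/s}\bigl(E^\infty_\beta(G[\mu])\bigr) = 0$; the bound $dim_{\mathcal{H}} E_\beta(G[\mu]) \le b'(h-\beta)/s$ then follows by exhausting the threshold constant $c > 0$ in the definition of $E_\beta$.

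The main obstacle will be controlling the near-diagonal singular contribution: unlike for Poisson integrals, a large value of $G[\mu](\gamma_\xi(t))$ can arise from $\mu$ being sharply concentrated in a shrinking ball around $\gamma_\xi(t)$, a phenomenon invisible at the level of a single shadow on $\partial X$. I plan to handle this via a dyadic Whitney-type decomposition of $X$, treating the singular part separately at each scale and exploiting the strict gap $\beta < h-n+2$ to recover a residual exponential gain after absorbing the $d(x,y)^{-(n-2)}$ factor. Propagating this gain alongside the Rauch-comparison factor from the second step is what produces the final exponent $b'(h-\beta)/s$ in the Hausdorff estimate.
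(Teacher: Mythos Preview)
Your overall architecture---Green function estimates, a shadow-diameter bound, a density/covering argument with Vitali, then a Hausdorff content estimate---matches the paper's. But the shadow step, which is the heart of the argument, has a genuine gap.

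You claim that Rauch comparison with $K_X \ge -b^2$ gives that rays from $o$ ``diverge at most at the rate $e^{bt}$'' and hence the shadow of a unit ball at distance $r$ subtends angle $\lesssim e^{-br}$. The first clause is correct: $K_X\ge -b^2$ bounds Jacobi fields from \emph{above} by $\sinh(bt)/b$. But that inequality goes the wrong way for your purpose. An upper bound on divergence tells you only that \emph{every} direction within angle $\lesssim e^{-br}$ lands in the unit ball---it is a lower bound on the shadow, not an upper bound. To bound the angular diameter from above you would need a \emph{lower} bound on divergence, which comes from an \emph{upper} curvature bound; with only $K_X\le 0$ the best you get is linear divergence and hence angular diameter $\lesssim 1/r$, far too weak. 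The subsequent conversion ``$\rho\asymp\theta^{1/s_0}$'' is also not a valid identity in this generality; the visual quasi-metric is defined through the Gromov product, and there is no two-sided comparison with the Riemannian angle at $o$ under the stated hypotheses.

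The paper avoids angles at $o$ altogether. It works directly with Gromov products: for $x$ in a ball $B(z,r)$ one has $(\xi|\eta)_o\ge d(o,z)-r+(\xi|\eta)_z$, so the exponential factor $e^{-s\,d(o,z)}$ in the $\rho_s$-diameter comes purely from the definition $\rho=e^{-(\cdot|\cdot)_o}$ and monotonicity of the Gromov product---this is where Gromov hyperbolicity, not curvature, is used. The lower curvature bound $K_X\ge -b^2$ enters only at the second stage, to bound $e^{-(\xi|\eta)_z}$ via angle comparison \emph{at the ball center} $z$ (not at $o$), producing the factor $r^{s/2b}$ in the ball radius. This is Lemma~\ref{shadow_upper}, and it is what actually yields the exponent $b'(h-\beta)/s$.

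Two smaller points. First, your Green estimate $G(x,y)\le C e^{-hd(x,y)}\min(1,d(x,y)^{-(n-2)})$ is misstated: for $d(x,y)<1$ it gives a bounded upper bound, contradicting the $d^{-(n-2)}$ singularity, and for $d(x,y)>1$ it is too strong by a polynomial factor. The correct two-regime estimate is (\ref{green_estimate}). Second, the paper simplifies matters by splitting $G[\mu]=u_1+u_2$ according to whether $y\in B(x,1)$; the far part $u_2$ is pointwise dominated by a Poisson integral of a finite measure on $\partial X$ and is disposed of by Theorem~\ref{poisson_thm}, so the shadow/covering machinery is only needed for the near-diagonal piece $u_1$. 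Your Busemann-shell decomposition could in principle handle both at once, but the split makes the bookkeeping much cleaner.
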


We then have the following analogue of Theorem \ref{poisson_sharp_thm} for Green potentials.
\begin{theorem}\label{green_sharp_thm}
Let $X$ be a non-positively curved Harmonic manifold of purely exponential volume growth with mean curvature of horospheres $h>n-2$. Let $\beta \in [0, h-n+2)$ and $E \subset \partial X$ with $\mathcal{H}^{(h-\beta)/s}(E)=0$. Then there exists a Green potential $u$ on $X$ such that $E \subset E^\infty_\beta(u)$.
\end{theorem}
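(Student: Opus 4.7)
The plan is to construct $u$ as the Green potential of a discrete measure whose atoms sit along radial geodesics pointing to a covering of $E$, adapting Hirata's scheme \cite{H} to the Harmonic manifold setting. Since $\mathcal{H}^{(h-\beta)/s}(E)=0$, for each integer $k \geq 1$ I choose a countable cover of $E$ by open $\rho_s$-balls $\{B(\xi_{k,j}, r_{k,j})\}_{j\geq 1}$ with $r_{k,j} \in (0,1)$ and $\sum_j r_{k,j}^{(h-\beta)/s} < 2^{-k}/k$, and I set $R_{k,j} := -s^{-1}\log r_{k,j}$ together with $y_{k,j} := \gamma_{\xi_{k,j}}(R_{k,j})$.

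The construction rests on two geometric facts in our setup. First, there is a constant $C_1>0$ (independent of $k,j$) such that $\xi \in B(\xi_{k,j}, r_{k,j})$ implies $d(\gamma_\xi(R_{k,j}), y_{k,j})\le C_1$; this should follow from the comparability of $\rho_s$ with $e^{-s(\cdot|\cdot)_o}$ (Gromov product at $o$) in the non-positively curved setting. Second, the radial structure of the Green function together with the purely exponential volume growth gives two-sided asymptotics $G(x,y) \asymp e^{-hd(x,y)}$ for $d(x,y) \ge 1$, and in particular a uniform lower bound $G(x,y) \ge G_0 > 0$ whenever $d(x,y) \le C_1$.

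Put $c_{k,j} := k\, e^{\beta R_{k,j}}/G_0$, $\mu := \sum_{k,j} c_{k,j}\, \delta_{y_{k,j}}$, and $u := G[\mu]$. Using $G(o,y_{k,j}) \asymp e^{-hR_{k,j}}$ together with $e^{-R_{k,j}} = r_{k,j}^{1/s}$, one has
\begin{equation*}
\int_X G(o,y)\,d\mu(y) \lesssim \sum_{k,j} c_{k,j}\, e^{-h R_{k,j}} = \frac{1}{G_0}\sum_k k \sum_j r_{k,j}^{(h-\beta)/s} < \frac{1}{G_0}\sum_k 2^{-k} < \infty,
\end{equation*}
so $\mu$ is a finite Radon measure and $u$ is a genuine Green potential on $X$. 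For any $\xi \in E$ and each $k$, the covering property provides $j_k$ with $\xi \in B(\xi_{k,j_k}, r_{k,j_k})$; by the lower bound on $G$ and the choice of $c_{k,j_k}$,
\begin{equation*}
e^{-\beta R_{k,j_k}}\, u(\gamma_\xi(R_{k,j_k})) \ge e^{-\beta R_{k,j_k}} c_{k,j_k}\, G_0 = k.
\end{equation*}
Since $r_{k,j_k}\le (2^{-k}/k)^{s/(h-\beta)} \to 0$, we have $R_{k,j_k}\to +\infty$ as $k\to\infty$, so $\limsup_{t\to\infty} e^{-\beta t} u(\gamma_\xi(t)) = +\infty$, whence $\xi \in E^\infty_\beta(u)$.

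The main technical obstacle is justifying the two geometric inputs above in the generality of non-positively curved Harmonic manifolds of purely exponential volume growth: the closeness of geodesics should reduce to the Gromov-product interpretation of $\rho_s$, while the two-sided Green function bounds reduce to the radial ODE satisfied by $G$ together with the volume-growth asymptotics of the Jacobian. Both pieces should be available from the groundwork laid in the preceding sections of the paper.
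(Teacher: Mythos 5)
Your proposal is essentially the paper's argument, with one cosmetic variation: you place atomic masses at the points $y_{k,j}=\gamma_{\xi_{k,j}}(R_{k,j})$, whereas the paper spreads the same weights $m\,e^{\beta t^{(m)}_j}$ over balls of a fixed radius $2(1+\delta)$ centered at the analogous points, producing an absolutely continuous measure $f\,dvol$; both yield the same lower-bound mechanism. Three small points should be made explicit to complete the write-up. First, the covering radii $r_{k,j}$ must be taken below a fixed threshold (depending on $C_2$, $C_4$, $\delta$, $s$) so that the shadow lemma (Lemma \ref{shadow_lemma}) applies and so that $R_{k,j}>1$, which is needed for the estimate $G(o,y_{k,j})\asymp e^{-hR_{k,j}}$; the paper enforces this through the choice of the integers $k_m$, and since $\mathcal{H}^{(h-\beta)/s}(E)=0$ one can always refine the cover to achieve it. Second, your geometric claim that $\xi\in\mathscr{B}_s(\xi_{k,j},r_{k,j})$ forces $d(\gamma_\xi(R_{k,j}),y_{k,j})\le C_1$ for a uniform $C_1$ is exactly the content of Lemma \ref{shadow_lemma}: it guarantees $\gamma_\xi$ meets $B(\gamma_{\xi_{k,j}}(t_0),1+\delta)$ with $t_0=R_{k,j}-\log C_4$, and a triangle inequality then gives $C_1=2(1+\delta)+2|\log C_4|$; it does not follow merely from the comparability of $\rho_s$ with $e^{-s(\cdot|\cdot)_o}$, which controls Gromov products but not geodesic separation without the $\delta$-hyperbolicity input. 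Third, $\mu$ need not be a \emph{finite} Radon measure (the total mass $\sum c_{k,j}$ can diverge); what you actually need and do show is that $G[\mu](o)<+\infty$, which suffices for $u=G[\mu]$ to be a well-defined Green potential.
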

\begin{remark}
\begin{enumerate}
\item The condition $h>n-2$ is naturally posed due to the behavior of the Green function near its pole. Moreover, for any $\varepsilon>0$, all non-compact Harmonic manifolds with sectional curvature $K_X \le -{(1+\varepsilon)}^2{\left(\frac{n-2}{n-1}\right)}^2$, satisfy this property. The last statement follows from  the fact that the mean curvature of horospheres is obtained as the Laplacian of the Busemann functions and an application of the Hessian comparison theorem. 

\medskip

\item Comparing with Theorems $4$ and $6$ of \cite{H}, it follows that the Hausdorff dimension appearing in Theorem \ref{green_sharp_thm} is optimal. Then we note the gap in the corresponding Hausdorff dimensions in Theorem \ref{Green_thm} and Theorem \ref{green_sharp_thm} (when $b>1/2$), whereas in the case of Theorem \ref{poisson_thm}, the upper bound was shown to be sharp by Theorem \ref{poisson_sharp_thm}.  As it will be apparent from our arguments, the reason is two-fold. Firstly, unlike in the case of the Poisson kernel, the Green function has its singularity in the interior of the space. Hence while trying to compute the Hausdorff dimension of the exceptional set on the boundary, we have to project the analysis done in the interior to the boundary. This is where the geometric ingredient of variable curvature comes into play. Then for $b>1/2$, due to the pinching condition $-b^2 \le K_X \le 0$, we get a gap in the corresponding Hausdorff dimensions. 
\end{enumerate}
\end{remark}

Finally as a consequence of the above results we obtain our main result:
\begin{theorem} \label{superh_thm}
Let $X$ be as in the statement of Theorem \ref{Green_thm}. Let $u$ be a positive superharmonic function on $X$  and $\beta \in [0,h-n+2)$. Then for $b':=\max\{2b,1\}$, we have
\begin{equation*}
dim_{\mathcal{H}}E_\beta(u) \le b'\left(h-\beta\right)/s\:,\text{ and }
\mathcal{H}^{b'\left(h-\beta\right)/s}\left(E^\infty_\beta(u)\right) =0 \:.
\end{equation*}
Conversely, let $X$ be as in the statement of Theorem \ref{green_sharp_thm}. Then for $\beta \in [0, h-n+2)$ and $E \subset \partial X$ with $\mathcal{H}^{(h-\beta)/s}(E)=0$,  there exists a positive superharmonic function $u$ on $X$ such that $E \subset E^\infty_\beta(u)$.
\end{theorem}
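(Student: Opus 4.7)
The plan is to derive both halves of the theorem as corollaries of results already established in the paper: the forward direction by combining the Riesz decomposition (Theorem \ref{riesz_decom}) with the bounds in Theorems \ref{poisson_thm} and \ref{Green_thm}, and the converse as an immediate consequence of Theorem \ref{green_sharp_thm}.

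For the upper bound I would first observe that since $u$ is positive superharmonic, $-u$ is subharmonic and bounded above by the harmonic function $0$. Applying Theorem \ref{riesz_decom} to $-u$ then yields $-u = F_{-u} - G[\mu_{-u}]$, and since $-u \le 0$ the least harmonic majorant $F_{-u}$ must itself be nonpositive. Setting $F := -F_{-u} \ge 0$ and $\mu := \mu_{-u}$ gives the representation $u = F + G[\mu]$, in which $F$ is a nonnegative harmonic function and $G[\mu]$ is a well-defined Green potential; by the Martin representation formula, $F = P[\nu]$ for some finite nonnegative Borel measure $\nu$ on $\partial X$. Since both summands are nonnegative, subadditivity of $\limsup$ delivers the inclusions
\[
E_\beta(u) \subset E_\beta(F) \cup E_\beta(G[\mu]), \qquad E^\infty_\beta(u) \subset E^\infty_\beta(F) \cup E^\infty_\beta(G[\mu]).
\]
Theorem \ref{poisson_thm} applied to $F = P[\nu]$ provides $dim_\mathcal{H} E_\beta(F) \le (h-\beta)/s$ together with $\mathcal{H}^{(h-\beta)/s}(E^\infty_\beta(F)) = 0$, while Theorem \ref{Green_thm} applied to $G[\mu]$ gives the corresponding bounds with the larger exponent $b'(h-\beta)/s$. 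Since $b' = \max\{2b,1\} \ge 1$ forces $b'(h-\beta)/s \ge (h-\beta)/s$, the standard fact that $\mathcal{H}^r(A) = 0$ implies $\mathcal{H}^t(A) = 0$ for every $t \ge r$ upgrades the Hausdorff-measure vanishing for $E^\infty_\beta(F)$ to the exponent $b'(h-\beta)/s$, and taking the finite union on the right-hand side of the inclusions above finishes this half of the theorem.

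For the converse, given $E \subset \partial X$ with $\mathcal{H}^{(h-\beta)/s}(E) = 0$, Theorem \ref{green_sharp_thm} already supplies a Green potential $u = G[\mu]$ on $X$ with $E \subset E^\infty_\beta(u)$; since every Green potential of a nonnegative Radon measure is itself a positive superharmonic function, this same $u$ serves as the required witness. Consequently, there is no genuine obstacle in the argument, as the heavy lifting is carried out in Theorems \ref{riesz_decom}, \ref{poisson_thm}, \ref{Green_thm}, and \ref{green_sharp_thm}; the only points that demand attention are getting the correct orientation of the Riesz decomposition when applied to $-u$ rather than $u$ itself, and propagating the Hausdorff-measure-zero conclusion from the exponent $(h-\beta)/s$ up to $b'(h-\beta)/s$ on the harmonic piece.
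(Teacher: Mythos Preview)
Your proposal is correct and follows essentially the same route as the paper: apply the Riesz decomposition to $-u$ to write $u=P[\nu]+G[\mu]$, use the inclusions $E_\beta(u)\subset E_\beta(P[\nu])\cup E_\beta(G[\mu])$ (and similarly for $E^\infty_\beta$), invoke Theorems~\ref{poisson_thm} and~\ref{Green_thm}, and deduce the converse from Theorem~\ref{green_sharp_thm}. If anything, you are slightly more careful than the paper in explicitly noting that the Hausdorff-measure vanishing for the Poisson piece at exponent $(h-\beta)/s$ must be upgraded to the larger exponent $b'(h-\beta)/s$ via monotonicity before taking the union.
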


In the proofs of Theorems \ref{poisson_thm}, \ref{poisson_sharp_thm}, \ref{Green_thm} and \ref{green_sharp_thm}, we follow the general outline of the arguments in \cite{H} but unlike in the case of $\mathbb H^n(-1)$, the boundary of a non-positively curved Harmonic manifold of purely exponential volume growth is not sufficiently regular and hence our arguments take a substantial detour by estimating global geometric quantities. Unlike in the case of $\mathbb H^n(-1)$, non-constant curvature makes it hard to get sharp estimates on Riemannian angles and the diameters of `shadows' of balls. Then in order to get workable estimates of the above, one has to rely upon comparison principles afforded by the pinching condition on the sectional curvature for `small' balls and the shadow lemma of Gromov hyperbolic spaces for `large' balls. All of this ultimately results in the gap in the corresponding Hausdorff dimensions for Theorems \ref{Green_thm} and \ref{green_sharp_thm}, which can be viewed as a distinct feature of variable pinched non-positive curvature.

\medskip

The arguments in the proof of Theorem \ref{riesz_decom} follow the classical steps presented as in \cite{St} and \cite{U} but unlike in their case, our space need not be a Riemannian symmetric space and hence their approach of $M\ddot{o}bius$ group invariant potential theory breaks down. Instead, we look at a geometric manifestation of convolution and work our way through to obtain results similar to that of the homogeneous setup. 

\medskip

This paper is organized as follows. In section $2$, we recall the required preliminaries and fix our notations. In section $3$, we present our results on the Poisson integrals: Theorems \ref{poisson_thm} and \ref{poisson_sharp_thm}. In section $4$ we prove the Riesz decomposition theorem for subharmonic functions: Theorem \ref{riesz_decom}. In section $5$, the results for Green potentials: Theorems \ref{Green_thm} and \ref{green_sharp_thm}, are proved. Section $6$ consists of the proof of Theorem \ref{superh_thm}. 
 
\section{Preliminaries}
Throughout this article, $C(.)$ will be used to denote positive constants whose value may vary at each occurence, with dependence on parameters or geometric quantities made explicit inside the round bracket. When required, enumerated constants $C_1, C_2, \dots$ will be used to specify fixed constants. 

\medskip

Let $f_1$ and $f_2$ be two positive functions. Then the notation $f_1 \asymp f_2$ will imply that there exists $C>1$ such that $(1/C) f_1 \le f_2 \le C f_1$. Also $f_1 \gtrsim f_2$ (respectively, $f_1 \lesssim f_2$) will imply that there exists $C>0$ such that $f_1 \ge C f_2$ (respectively, $f_1 \le C f_2$). The indicator function of a set $A$ will be denoted by $\chi_A$.

\subsection{Gromov Hyperbolic Spaces}
In this subsection we recall briefly some  basic facts and definitions related to Gromov hyperbolic spaces. For more details, we refer to \cite{Bridson}.

\medskip

A {\it geodesic} in a metric space $X$ is an isometric embedding $\gamma : I \subset \mathbb{R} \to X$ of an interval into $X$. A metric space $X$ is said to be a {\it geodesic metric space} if any two points in $X$ can be joined by a geodesic. A geodesic metric space $X$ is called {\it Gromov hyperbolic} if there exists a $\delta \ge 0$ such that every geodesic triangle in $X$ is $\delta$-thin, that is, each side is contained in the $\delta$-neighbourhood of the union of the other two sides. This $\delta$ is called the Gromov hyperbolicity constant.

\medskip

For a Gromov hyperbolic space $X$, its {\it boundary at infinity} $\partial X$ is defined to be the set of equivalence classes of geodesic rays in $X$. Here a geodesic ray is an isometric embedding $\gamma : [0,\infty) \to X$ of a closed half-line into $X$, and two geodesic rays $\gamma, \tilde{\gamma}$ are said to be equivalent if the set $\{ d(\gamma(t), \tilde{\gamma}(t)) \ | \ t \geq 0 \}$ is bounded. The equivalence class of a geodesic ray $\gamma$ is denoted by $\gamma(\infty) \in \partial X$.  

\medskip

A metric space is said to be {\it proper} if closed and bounded balls in the space 
are compact. Let $X$ be a proper, geodesic, Gromov hyperbolic space. There is a natural topology on $\overline{X} := X \cup \partial X$, called the {\it cone topology} such that $\overline{X}$ is a compact metrizable space which is a compactification of $X$. In this case, for every geodesic ray $\gamma$,  $\gamma(t) \to \gamma(\infty) \in \partial X$ as $t \to \infty$, and for any $x \in X,\: \xi \in \partial X$ there exists a geodesic ray  $\gamma$ such that $\gamma(0) = x, \gamma(\infty) = \xi$.

\medskip

For $x,y,z \in X$, the Gromov product of $y,z$ with respect to $x$ is defined by,
 \begin{equation} \label{gromov_product}
 (y|z)_x := \frac{1}{2} \left(d(x,y)+d(x,z)-d(y,z)\right) \:.
 \end{equation}

\medskip

If the space $X$ is in addition $CAT(0)$ then for any $x \in X$, the Gromov product ${(\cdot|\cdot)}_x$\:, extends continuously to $\partial X \times \partial X$ (see \cite{B}) and hence we define: 
\begin{equation*}
 (\xi|\eta)_x := \displaystyle\lim_{\substack{y \to \xi \\ z \to \eta}} (y|z)_x \:.
 \end{equation*}
We note that $(\xi|\eta)_x = +\infty$ if and only if $\xi = \eta \in \partial X$. Moreover the above boundary continuity of the Gromov product results in the boundary continuity of the Busemann function defined in (\ref{busemann}).

\subsection{Harmonic Manifolds}
 In this subsection we discuss the required preliminaries on Harmonic manifolds. The materials covered here can be found in \cite{BKP}. 
 
\medskip

Let $X$ be a non-compact harmonic manifold of purely exponential volume growth, with origin $o \in X$. By purely exponential volume growth, it is meant that there exists $h > 0$ such that for all $R > 1$, the volume of metric ball $B(x, R)$ of center $x \in X$ and radius $R$ satisfies
\begin{equation*}
vol(B(x,R)) \asymp e^{hR} \:.
\end{equation*}
In our case, it turns out that the constant $h > 0$ agrees with the mean
curvature of the horospheres. 

\medskip

On Harmonic manifolds, the harmonic functions satisfy the usual mean value property on balls and spheres.

\medskip

For any $v \in  T^1_x X$ and $r > 0$, let $A(v,r)$ denote the Jacobian of the map $v \mapsto \exp_x(rv)$. The definition of a harmonic manifold which has been given in the Introduction is equivalent (\cite[p. 224]{Willmore}) to the fact that this Jacobian is solely a function of the radius, that is, there is a function $A$ on $(0,\infty)$, such that $A(v,r) = A(r)$ for all $v \in T^1X$. This function $A$ is called the {\it density function} of $X$. $A$ satisfies the following asymptotics:
\begin{equation} \label{jacobian_estimate}
A(r) \asymp \begin{cases}
             r^{n-1} & \text{ if } 0<r\le 1 \\
             e^{hr} & \text{ if } r>1 \:,
             \end{cases}
\end{equation}

\medskip

In \cite{Kn12}, it was shown that for $X$, a simply connected non-compact harmonic manifold of purely exponential
volume growth with respect to a fixed basepoint $o \in X$, the condition of purely exponential volume growth is equivalent to either of the following conditions:
\begin{enumerate}
\item $X$ is Gromov hyperbolic.
\item $X$ has rank one.
\item The geodesic flow of $X$ is Anosov with respect to the Sasaki metric.
\end{enumerate}

\medskip

Moreover, the Gromov boundary coincides with the visibility boundary $\partial X$ introduced in \cite{Eberlein}. This last fact follows from the work in \cite{KP16}. 

\medskip
One has a family of measures on $\partial X$ called the visibility measures $\{\lambda_x\}_{x \in X}$. For $x \in X$, let $\theta_x$ denote the normalized canonical measure on $T^1_x X$ (the unit tangent space at $x$), induced by the Riemannian metric and then the visibility measure $\lambda_x$ is obtained as the push-forward of $\theta_x$ to the boundary $\partial X$ under the radial projection. The visibility measures $\lambda_x$ are pairwise absolutely continuous. For $(x, \xi) \in X \times \partial X$, the Poisson kernel is obtained as the following Radon-Nykodym derivative:
\begin{equation*}
P(x, \xi) = e^{-hB_\xi(x)} = \frac{d \lambda_x}{d \lambda_o}(\xi) \:.
\end{equation*}
As a consequence of the above identity one has that $P[\lambda_o] \equiv 1$.

\medskip

The following is the Martin representation formula for positive harmonic functions on $X$, which is a consequence of \cite[Corollary 5.13]{KL}:
\begin{lemma} \label{martinrep} 
Let $u$ be a positive harmonic function on $X$. Then there is a unique, finite, positive Borel measure $\mu$ on $\partial X$ such that $u=P[\mu]$.
\end{lemma}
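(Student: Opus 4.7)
The plan is to invoke the classical Martin boundary representation of positive harmonic functions and then identify the abstract Martin compactification with the geometric compactification $X \cup \partial X$ from Section 2. Since $X$ is Gromov hyperbolic of purely exponential volume growth it is non-parabolic, so a positive symmetric Green function $G$ exists, and on a harmonic manifold it depends only on geodesic distance, with exponential decay at infinity governed by the horosphere mean curvature $h$. Form the Martin kernel
$$K(x,y) := \frac{G(x,y)}{G(o,y)}, \qquad x \in X,\ y \in X \setminus \{o\},$$
and take the Martin compactification to be the coarsest compactification of $X$ to which every $K(\cdot,y)$ extends continuously. The general Martin representation theorem then yields, for each positive harmonic $u$, a finite positive Borel measure $\nu$ supported on the minimal Martin boundary with $u(x) = \int K(x,\xi)\,d\nu(\xi)$, uniquely determined by $u$.

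The substantive analytic step is to identify the Martin boundary with $\partial X$ and to show that, after normalizing so that $K(o,\xi) = 1$, the Martin kernel at $\xi \in \partial X$ coincides with the Poisson kernel $P(\cdot,\xi) = e^{-h B_\xi(\cdot)}$. Concretely, for any sequence $y_n \to \xi$ along a geodesic ray one would establish
\begin{equation*}
\frac{G(x,y_n)}{G(o,y_n)} \longrightarrow e^{-h B_\xi(x)},
\end{equation*}
which follows from Ancona-type global comparison estimates for the Green function, combined with the boundary continuity of the Busemann function on Gromov hyperbolic CAT($0$) spaces noted in Section 2. Once the kernels match, the Martin representation becomes $u = P[\nu]$, and evaluating at $x = o$ together with $P(o,\xi) \equiv 1$ gives $\nu(\partial X) = u(o) < \infty$, so existence with a finite $\mu := \nu$ is secured.

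Uniqueness of $\mu$ reduces to showing that each $P(\cdot,\xi)$ is a minimal positive harmonic function, equivalently, that the minimal Martin boundary is all of $\partial X$. Classically this is proved by a $0$-$1$ law for the $P(\cdot,\xi)$-conditioned Brownian motion, or directly from the Ancona boundary Harnack inequality for pairs of positive harmonic functions vanishing at $\xi$. I expect this minimality step to be the main obstacle, because the Anderson-Schoen/Ancona framework traditionally requires strictly negative pinched sectional curvature, whereas here we only assume $K_X \le 0$ bounded below and purely exponential volume growth. The substitute ingredients are the Gromov hyperbolicity of $X$ (afforded by purely exponential growth on a harmonic manifold) together with the explicit radial form and exponential decay of $G$; packaging them into a boundary Harnack principle strong enough to yield both the kernel identification and the minimality of $P(\cdot,\xi)$ is exactly what \cite[Corollary 5.13]{KL} achieves in this setting, and the lemma then follows by transporting its output through the normalization $P[\lambda_o] \equiv 1$.
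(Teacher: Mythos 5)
Your proposal is correct and ultimately rests on the same external input as the paper: the paper offers no independent argument for this lemma and simply derives it from \cite[Corollary 5.13]{KL}, which is precisely where your sketch also places the load-bearing steps (identification of the Martin boundary with $\partial X$, matching of the Martin kernel with $e^{-hB_\xi(\cdot)}$, and minimality). Your outline of the Martin-theoretic machinery behind that citation is accurate in spirit, so the two proofs coincide in substance.
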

Next we introduce the notion of radial functions. For $x \in X$, let $d_x$ denote the distance function with respect to the point $x$, that is, $d_x(y):=d(x, y)$. A function $f$ on X is called {\it radial} around a point $x \in X$ if $f$ is constant on geodesic spheres centered at $x$. Then note that for a function $f$ radial around a point $x \in X$, we can associate a function $u$ on $\R$ such that $f=u \circ d_x$.

\medskip

If we just say that a function $f$ is radial, then it will be understood that $f$ is radial around $o$, that is, there exists a function $u$ on $\R$ such that $f=u \circ d_o$. For $x \in X$, one has the definition of an $\it x-translate$ of a radial function $f$ as:
\begin{equation} \label{translate}
\tau_xf:= u \circ d_x \:.
\end{equation}

\medskip

Let $\Delta$ denote the Laplace-Beltrami operator associated to the Riemannian metric on $X$. Then one has the following result for Harmonic manifolds:
\begin{lemma} \label{laplacian_commutes_translation}
Let $f \in C^2(X)$ be radial. Then we have for all $x \in X$,
\begin{equation*}
\tau_x ( \Delta f) = \Delta (\tau_x f) \:.
\end{equation*}
\end{lemma}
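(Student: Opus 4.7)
The plan is to exploit the fundamental property of a harmonic manifold: the density function $A(r)$ appearing in the volume element of geodesic polar coordinates is independent of the base point. This, combined with the standard formula for the Laplacian in geodesic polar coordinates, forces the Laplacian of a radial function to again be radial (around the same center), and with a formula that depends only on the single-variable data $u$ and the universal function $A$, not on where the center sits in $X$.

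Concretely, I would first recall that on a harmonic manifold, at any point $p \in X$ and in geodesic polar coordinates around $p$, the Laplace--Beltrami operator applied to any $C^2$ function $g = w \circ d_p$ that is radial around $p$ is given by
\begin{equation*}
\Delta g(y) \;=\; w''\bigl(d_p(y)\bigr) \;+\; \frac{A'\bigl(d_p(y)\bigr)}{A\bigl(d_p(y)\bigr)}\, w'\bigl(d_p(y)\bigr),
\end{equation*}
for all $y$ with $d_p(y)>0$ in the injectivity radius of $p$ (which, since $X$ is Hadamard in our setting, is all of $X\setminus\{p\}$). The crucial point is that $A$ here is the density function of $X$ and does not depend on the chosen base point $p$; this is exactly the content of the equivalent definition of ``harmonic'' given by Willmore and recorded above. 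From this formula one reads off in particular that $\Delta g$ is itself radial around $p$.

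Applying this observation twice gives the lemma in two lines. Since $f = u \circ d_o$ is radial around $o$, the formula yields
\begin{equation*}
\Delta f \;=\; v \circ d_o, \qquad v(r) := u''(r) + \tfrac{A'(r)}{A(r)}\,u'(r),
\end{equation*}
so by the definition \eqref{translate} of translation, $\tau_x(\Delta f) = v \circ d_x$. On the other hand $\tau_x f = u \circ d_x$ is radial around $x$, so applying the same formula at the center $p=x$ with $w = u$ gives
\begin{equation*}
\Delta(\tau_x f)(y) \;=\; u''\bigl(d_x(y)\bigr) + \tfrac{A'(d_x(y))}{A(d_x(y))}\,u'\bigl(d_x(y)\bigr) \;=\; v\bigl(d_x(y)\bigr).
\end{equation*}
The two right-hand sides agree, which is the claim. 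The only care needed is the behavior at the single point $y=x$ (respectively $y=o$), which is handled by the $C^2$ hypothesis on $f$ together with continuity of $\Delta f$; alternatively one notes that the statement is an equality of continuous functions and the agreement off a point of measure zero forces agreement everywhere.

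The only real content of the argument is thus the radial-Laplacian formula with a center-independent $A$; the rest is bookkeeping. I do not anticipate a serious obstacle, but the subtle point worth highlighting is precisely that base-point independence of the density function $A$, since without it the right-hand sides above would involve two a priori different functions of the radial variable and the commutation would fail.
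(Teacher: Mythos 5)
Your proof is correct and is essentially the paper's own argument: the paper likewise invokes the radial part $L_R = \frac{d^2}{dr^2} + \frac{A'(r)}{A(r)}\frac{d}{dr}$ of the Laplacian (via Proposition 3.2 of \cite{BKP}) and uses the base-point independence of the density function $A$ to conclude $\tau_x(\Delta f) = (L_R u)\circ d_x = \Delta(\tau_x f)$. Nothing further is needed.
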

\begin{proof}
Let $L_R$ denote the radial part of $\Delta$, that is, the differential operator on $(0,\infty)$ defined by,
\begin{equation*}
L_R := \frac{d^2}{dr^2} + \frac{A'(r)}{A(r)} \frac{d}{dr} \:.
\end{equation*}
Let $f=u \circ d_o$, where $u$ is the corresponding function on $\R$. Then by repeated application of Proposition $3.2$ of \cite{BKP}, we get
\begin{equation*}
\tau_x (\Delta f) = \tau_x \left( \left(L_R u\right) \circ d_o \right) = (L_R u) \circ d_x = \Delta (u \circ d_x) = \Delta ( \tau_x f )\:.   
\end{equation*}
\end{proof}
For a measurable function $f$ on $X$ and a measurable function which is radial, say $g$ on $X$, their convolution is defined as
\begin{equation} \label{convolution}
f*g(x):= \int_X f(y) (\tau_x g)(y) dvol(y) \:,
\end{equation} 
whenever the above integral is well-defined.

\medskip

The following Lemma summarizes a few important properties of convolution. Proofs are straightforward consequences of the definition and can also be found in \cite{BKP,PS}.
\begin{lemma} \label{props_of_conv}
(1) If $f$ and $g$ are both measurable radial functions on $X$ then if their convolution is defined at $x \in X$, one has
\begin{equation*}
 (f*g)(x)=(g*f)(x)\:.
\end{equation*}

(2) If $f$ is a measurable function on $X$, $g$ and $h$ are measurable radial functions on $X$ such that the convolutions are defined at $x \in X$, then 
\begin{equation*}
(f*(g*h))(x) = ((f*g)*h)(x) \:.
\end{equation*}

(3) If $f$ and $g$ are two radial functions on $X$ such that their convolution $f*g$ is defined at all points in $X$ then $f*g$ is also a radial function.
\end{lemma}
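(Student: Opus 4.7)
My plan is to prove (3) first, then deduce (2) from it by Fubini, and finally to establish the commutativity assertion (1), which I expect to be the main obstacle. Throughout, write $f=v\circ d_o$, $g=u\circ d_o$, $h=w\circ d_o$.

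For (3), I would pass to geodesic polar coordinates around $x$ to obtain
\begin{equation*}
(f*g)(x)=\omega_{n-1}\int_0^{\infty} u(s)\,A(s)\,M^x_s(v\circ d_o)\,ds,
\end{equation*}
where $M^x_s\phi:=(\omega_{n-1}A(s))^{-1}\int_{S(x,s)}\phi\,d\sigma$. A standard fact about harmonic manifolds (see \cite{BKP,PS}), closely related to Lemma \ref{laplacian_commutes_translation}, is that the spherical mean of a function radial around $o$ is again radial around $o$: that is, $x\mapsto M^x_s(v\circ d_o)$ is radial around $o$. Hence $(f*g)(x)$ depends on $x$ only through $d(o,x)$.

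For (2), I would expand by definition and apply Fubini (justified by the assumption that the convolutions are defined at $x$) to write
\begin{equation*}
((f*g)*h)(x)=\int_X v(d(o,z))\,I(x,z)\,dvol(z),\qquad I(x,z):=\int_X u(d(z,y))\,w(d(x,y))\,dvol(y),
\end{equation*}
after using the metric symmetry $d(y,z)=d(z,y)$. The $z$-centered analogue of (3), proved by exactly the same argument, shows $I(x,z)$ depends only on $d(x,z)$; specializing to $z=o$ identifies $I(x,o)=(g*h)(x)$, so $I(x,z)=\tau_x(g*h)(z)$ for every $z$. Substituting back recovers $(f*(g*h))(x)$.

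The hard step is (1), equivalent to the identity
\begin{equation*}
\int_X v(d(o,y))\,u(d(x,y))\,dvol(y)=\int_X u(d(o,y))\,v(d(x,y))\,dvol(y).
\end{equation*}
Using double-distance coordinates $(r,s):=(d(o,y),d(x,y))$, the volume measure decomposes as $\kappa(r,s,D)\,dr\,ds$ with $D:=d(o,x)$, and the identity reduces to the symmetry $\kappa(r,s,D)=\kappa(s,r,D)$. I expect this to be the main obstacle: on a general Riemannian manifold no such symmetry is available, but the harmonicity of $X$, equivalently the purely radial dependence $A(v,r)=A(r)$ of the exponential-map Jacobian, forces $\kappa$ to depend only on the unordered pair of distances from $y$ to $o$ and to $x$. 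The verification proceeds via the coarea formula, expressing $\kappa$ once by polar coordinates around $o$ and once by polar coordinates around $x$ and matching the two representations; the full details run as in \cite{BKP,PS}.
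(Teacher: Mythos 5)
The paper offers no proof of Lemma \ref{props_of_conv}: it declares the properties ``straightforward consequences of the definition'' and refers to \cite{BKP,PS}, so there is no in-paper argument to measure you against. Your outline has the right architecture and correctly isolates the crux: everything reduces to the two-point homogeneity of spherical averages on a harmonic manifold, i.e.\ the fact that the average of $v(d(q,\cdot))$ over $S(p,s)$ is a universal function $\Psi_v(s,d(p,q))$ of the radius and the distance between the two centers, equivalently that the bipolar density $\kappa(r,s,D)$ equals both $A(r)\rho(s\mid r,D)$ and $A(s)\rho(r\mid s,D)$ and is therefore symmetric in $(r,s)$. That fact is genuinely nontrivial (it is where harmonicity enters, via the commutation of mean value operators / the Darboux equation), and you, like the paper, ultimately cite it rather than prove it; given the paper's own treatment, that is acceptable.

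One soft spot: the version of the fact you actually state for (3) --- that $x\mapsto M^x_s(v\circ d_o)$ is radial around the \emph{fixed} center $o$ --- is weaker than what your arguments for (1) and (2) consume. In (2) you need $I(x,z)=J(d(x,z))$ with the profile $J$ \emph{independent of} $z$; the ``$z$-centered analogue of (3)'' only yields $I(x,z)=J_z(d(x,z))$, and without knowing $J_z=J_o$ you cannot specialize to $z=o$ to identify $I$ with $\tau_x(g*h)$. Likewise the symmetry of $\kappa$ in (1) requires the uniform two-point statement, not radiality center by center. So state and invoke the two-point version from the outset; it is what \cite{PS} actually provides. Doing so also collapses (1) to a one-line computation --- expand $f*g(x)$ in polar coordinates about $x$ and $g*f(x)$ in polar coordinates about $o$; both equal $\int_0^\infty u(r)A(r)\,\Psi_v(r,d(o,x))\,dr$ --- so the separate bipolar/coarea discussion is unnecessary. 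Finally, a notational slip: in (2) the function $f$ is not assumed radial, so $v(d(o,z))$ should read $f(z)$; the Fubini argument is otherwise unaffected.
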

For $\xi \in \partial X$, the level sets of the Busemann function $B_\xi$ are called horopsheres based at $\xi$. For all $\xi \in \partial X$, the horsopheres based at $\xi$ have the same positive, constant mean curvature $h>0$ and can be obtained as 
\begin{equation} \label{LapBus}
\Delta B_\xi \equiv h \:.
\end{equation}  

\medskip

The following is a version of the Harnack inequality due to Yau in \cite{Y75}:
\begin{lemma}[Harnack-Yau] \label{Harnack_Yau}
Let $X$ be a Hadamard manifold with $-b^2 \le K_X \le 0$. Then there exists a constant $C(b,n)>0$ such that for any open set $\Omega \subset X$ and every positive harmonic function $u : \Omega \to (0,+\infty)$, one has
\begin{equation*}
\|\nabla \log u\| \le C(b,n) \:,\text{ for all } x \in X \text{ with } d(x, \partial \Omega) \ge 1\:.
\end{equation*}
\end{lemma}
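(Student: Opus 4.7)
The plan is to reproduce Yau's classical Bochner-technique gradient estimate for positive harmonic functions on complete manifolds with Ricci curvature bounded below. Fix $x_0 \in X$ with $d(x_0, \partial \Omega) \ge 1$, so that $\overline{B(x_0, 1)} \subset \Omega$. Set $f := \log u$ on $\Omega$; harmonicity of $u$ gives $\Delta f = -|\nabla f|^2$, and the sectional curvature hypothesis $-b^2 \le K_X \le 0$ forces the Ricci lower bound $\mathrm{Ric}_X \ge -(n-1)\, b^2\, g$, which is the only curvature input I will use.

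Writing $Q := |\nabla f|^2$, the Bochner--Weitzenb\"ock formula combined with the algebraic inequality $|\mathrm{Hess}\, f|^2 \ge (\Delta f)^2/n = Q^2/n$ gives, pointwise on $\Omega$,
\begin{equation*}
\tfrac{1}{2} \Delta Q \;\ge\; \tfrac{1}{n} Q^2 \;-\; \langle \nabla f, \nabla Q \rangle \;-\; (n-1)\, b^2\, Q,
\end{equation*}
where I have used $\nabla \Delta f = -\nabla Q$.

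Next I would localize via a radial cutoff $\phi := \eta(d_{x_0})^2$, where $\eta:[0,\infty)\to[0,1]$ is smooth, $\eta \equiv 1$ on $[0,1/2]$ and $\eta \equiv 0$ on $[1,\infty)$. Since $X$ is Hadamard, the exponential map at $x_0$ is a diffeomorphism, so $d_{x_0}$ is smooth off $x_0$ and Calabi's trick is avoided; the Laplacian comparison theorem together with $\mathrm{Ric}\ge -(n-1) b^2 g$ yields $|\Delta \phi| \le C(n,b)$ and $|\nabla \phi|^2/\phi \le C(n)$ on $\Omega$, the latter being the precise reason to choose the square $\eta^2$ rather than $\eta$ itself. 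Consider $G := \phi Q$, compactly supported in $\overline{B(x_0, 1)} \subset \Omega$; it attains its supremum at some interior point $p$. At $p$, the condition $\nabla G(p) = 0$ yields $\nabla Q(p) = -Q(p)\, \nabla \phi(p)/\phi(p)$, and $\Delta G(p) \le 0$ expands as $\phi\, \Delta Q + 2\langle \nabla \phi, \nabla Q\rangle + Q\, \Delta \phi \le 0$. Substituting the Bochner inequality for $\Delta Q$ and bounding the cross-term $\langle \nabla f, \nabla Q\rangle$ by Cauchy--Schwarz followed by Young's inequality with parameter $\varepsilon = 1/n$ absorbs all lower-order contributions into the leading $Q^2$ term, producing an algebraic inequality $\tfrac{1}{n}\, \phi(p)\, Q(p)^2 \le C(n,b)\, Q(p)$, that is $G(p) = \phi(p) Q(p) \le C(n,b)$. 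Since $\phi(x_0) = 1$ and $G$ is maximal at $p$, one concludes $\|\nabla \log u\|^2(x_0) = Q(x_0) = G(x_0) \le G(p) \le C(n,b)$.

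The main obstacle is the algebraic bookkeeping at the maximum point $p$: both the cross-term $2 \langle \nabla \phi, \nabla Q\rangle$, which equals $-2Q|\nabla \phi|^2/\phi$ at $p$, and the Cauchy--Schwarz contribution $2Q\langle \nabla f, \nabla \phi\rangle$ coming from $\langle \nabla f, \nabla Q\rangle$, must be absorbed into the dominant $\tfrac{2}{n}\phi Q^2$ with the correct scaling; the quadratic cutoff $\phi = \eta^2$ is essential, as it prevents the Young-type absorption from degenerating near $\partial B(x_0,1)$. The remaining ingredient, the Laplacian comparison $\Delta d_{x_0} \le (n-1)\, b\, \coth(b\, d_{x_0})$ away from $x_0$, is routine on Hadamard manifolds with $\mathrm{Ric}$ bounded below and requires no barrier argument.
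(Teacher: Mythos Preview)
The paper does not prove this lemma at all: it merely states it as ``a version of the Harnack inequality due to Yau'' and cites \cite{Y75}, then moves on. So there is no proof in the paper to compare against.

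Your proposal is a faithful reconstruction of Yau's original Bochner-formula argument, and the outline is correct. The key steps --- the identity $\Delta f = -|\nabla f|^2$, the Bochner--Weitzenb\"ock inequality with the Ricci lower bound $\mathrm{Ric}\ge -(n-1)b^2 g$, the squared radial cutoff $\phi = \eta(d_{x_0})^2$ to keep $|\nabla\phi|^2/\phi$ bounded, the maximum-principle analysis of $G=\phi Q$, and the Laplacian comparison bound on $\Delta d_{x_0}$ --- are all the standard ingredients and are assembled in the right order. Your observation that the Hadamard hypothesis makes $d_{x_0}$ smooth off $x_0$ (so Calabi's support-function trick is unnecessary) is a nice simplification, and since $\phi$ is constant on $B(x_0,1/2)$ the non-smoothness of $d_{x_0}$ at $x_0$ itself is harmless. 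The only place to be careful is the algebraic absorption at $p$, which you flag yourself; done carefully (choosing the Young parameter so that the $\phi Q^2$ coefficient stays positive) it goes through and yields the stated bound depending only on $n$ and $b$.
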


\medskip
We next state without proof an easy consequence of Harnack-Yau: 
\begin{lemma} \label{Harnack_Thm}
Let $X$ be as in Lemma \ref{Harnack_Yau} and $\{f_n\}$ be a non-decreasing sequence of harmonic functions on an open connected set $\Omega \subset X$. Then either $f_n(x) \to +\infty$ for all $x \in \Omega$ or that $\{f_n\}$ converges to a harmonic function uniformly on compact subsets of $\Omega$.  
\end{lemma}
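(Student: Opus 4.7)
The plan is to deduce this dichotomy from a multiplicative Harnack inequality on compact subsets, which in turn is extracted from the gradient bound of Lemma \ref{Harnack_Yau}. First I would subtract $f_1$ to reduce to a non-decreasing sequence $g_n := f_n - f_1 \ge 0$ of non-negative harmonic functions, and define the pointwise limit $g(x) := \lim_{n\to\infty} g_n(x) \in [0, +\infty]$ on $\Omega$.

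Next I would upgrade Yau's gradient bound to a scale-invariant Harnack inequality: for every compact connected $K \subset \Omega$, there exists $C_K > 0$ such that for every positive harmonic $u$ on $\Omega$,
\begin{equation*}
\sup_K u \;\le\; C_K \inf_K u.
\end{equation*}
To obtain this, I would cover $K$ by finitely many small balls $B(x_i, r_i)$ with $B(x_i, 2r_i)$ contained in a fixed relatively compact open $U$ with $d(U, \partial\Omega) \ge 1$ (for points near $\partial\Omega$ one applies a scaled version of Yau's estimate on an interior ball), integrate $\|\nabla \log u\| \le C(b,n)$ along a geodesic arc joining consecutive centers, and exponentiate. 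To handle the possibility that a $g_n$ vanishes somewhere, note that by the strong maximum principle on the connected set $\Omega$ each $g_n$ is either identically zero or strictly positive, so the estimate applies to the strictly positive differences $g_m - g_n$ ($m > n$) after discarding trivial terms.

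Applying this Harnack inequality to the non-negative harmonic differences $g_m - g_n$ and letting $m \to \infty$ yields the dichotomy: on any compact connected $K \subset \Omega$, either $g_n \to +\infty$ uniformly on $K$, or $\sup_K g_n \le C_K g_n(x_0)$ stays bounded whenever $g(x_0) < +\infty$ for some $x_0 \in K$. Consequently the sets
\begin{equation*}
A := \{x \in \Omega : g(x) = +\infty\}, \qquad B := \{x \in \Omega : g(x) < +\infty\}
\end{equation*}
are both open, and connectedness of $\Omega$ forces $A = \Omega$ or $B = \Omega$.

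In the case $B = \Omega$, the locally uniform pointwise limit $g$ inherits the spherical mean value property from the $g_n$ by monotone convergence, hence $g$ is harmonic; $f := f_1 + g$ is the desired harmonic limit of $f_n$. Local uniform convergence then follows either from Dini's theorem applied to the monotone sequence of continuous functions $g_n \uparrow g$ on compact sets, or more self-containedly from the Harnack inequality applied to $g - g_n \ge 0$, giving $\sup_K(g - g_n) \le C_K (g - g_n)(x_0) \to 0$. I expect the main technical nuisance to be the mismatch between Yau's hypothesis $d(x, \partial\Omega) \ge 1$ and the need to reach every point of an arbitrary open $\Omega$; this is handled by a chaining argument with scaled gradient estimates, but the bookkeeping is the most delicate part of the proof.
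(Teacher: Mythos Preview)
The paper does not actually prove this lemma: it is stated ``without proof'' as ``an easy consequence of Harnack--Yau.'' Your sketch is the standard derivation and is correct in outline. One small refinement: rather than first forming the pointwise limit $g$ and then invoking Dini or a Harnack bound on $g-g_n$ (which tacitly assumes $g$ is already known to be harmonic and continuous), it is cleaner to argue uniform Cauchyness directly via
\[
\sup_K (g_m - g_n) \;\le\; C_K\,(g_m - g_n)(x_0) \longrightarrow 0 \quad (m>n\to\infty),
\]
using the strong maximum principle to dispose of the case $g_m-g_n\equiv 0$; harmonicity of the uniform limit then follows from the mean value property. The scaling issue you flag---Lemma~\ref{Harnack_Yau} as stated only covers points with $d(x,\partial\Omega)\ge 1$---is real but harmless: Yau's estimate in its standard form gives $\|\nabla\log u(x)\|\le C(b,n)/d(x,\partial\Omega)$, which after integrating along chains of short geodesic arcs yields the required multiplicative Harnack inequality on any compact $K\subset\Omega$.
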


\medskip

While working in polar coordinates, we will frequently use the following notation: for $x \in X$ and $v \in T^1_x X$, $\gamma_{x,v}$ is the geodesic such that $\gamma_{x,v}(0)=x$ and $\gamma'_{x,v}(0)=v$. 

\medskip
 
For $f \in C^2(X)$, one has by the Taylor expansion for $x\in X$, $t>0$ sufficiently small:
\begin{equation} \label{infinitesimal_mvp}
\Delta f(x) \frac{t^2}{2n} + C(n) E(t) = \int_{T^1_x X} \left\{f\left(\gamma_{x,v}(t)\right) - f(x)\right\} \: d\theta_x(v)\:, 
\end{equation}
for some constant $C(n)>0$ and a term $E(t)$ which is of order $t^3$\:. 

\medskip

We recall that for an open subset $\Omega \subset X$, an upper semi-continuous function $f: \Omega \to [-\infty,+\infty)$, with $f \not \equiv -\infty$ is subharmonic on $\Omega$ if 
\begin{equation} \label{submvp}
f(x) \le \int_{T^1_x X} f\left(\gamma_{x,v}(r)\right) d\theta_x(v) \:,
\end{equation}
for all $x \in \Omega$ and $r>0$ sufficiently small. It is known that if $f$ is subharmonic on $X$ then (\ref{submvp}) is true for all $r>0$. Moreover, $f$ is locally integrable and bounded above on compact sets. For $f \in C^2(X)$, the above notion of subharmonicity is equivalent to the condition that $\Delta f  \ge 0$. A function $f$ is superharmonic if $-f$ is subharmonic.

\medskip

Now as in our case,
\begin{equation*}
\int_1^{+\infty} \frac{1}{A(r)} \: dr < +\infty \:,
\end{equation*}
we have a positive Green function, which is a radial function defined by
\begin{equation} \label{Green_fn}
G(r) = \frac{1}{C(n)} \int_{r}^{+\infty} \frac{1}{A(s)} \:ds \:,
\end{equation}
for some constant $C(n)>0$. Then (\ref{jacobian_estimate}) yields the following estimates of the Green function:
\begin{equation} \label{green_estimate}
G(r) \asymp \begin{cases}
             \frac{1}{r^{n-2}} & \text{ if } 0<r\le 1 \\
             e^{-hr} & \text{ if } r>1 \:,
             \end{cases}
\end{equation}
upto a positive constant depending only on $n$ and $h$, denoted by $C_1(h,n)$. Then for $x \in X$ the Green function with pole at $x$ is defined by,
\begin{equation*}
G_x(y) := (G \circ d_x)(y)= G(d(x,y)) \:, \text{ for } y \in X\:,
\end{equation*}
and is denoted by $G(x,y)$. Note that it is symmetric in its arguments. The distributional Laplacian of $G_x$ is, 
\begin{equation*}
\Delta G_x = - \delta_x \:.
\end{equation*}
$G_x$ is harmonic on $X \setminus \{x\}$ and superharmonic on $X$. For a non-negative Borel measure $\mu$ on $X$, we say that it has a well-defined Green potential if there exists $x_0 \in X$ such that
\begin{equation*}
G[\mu](x_0) = \int_X G(x_0,y)\: d\mu(y) < +\infty \:.
\end{equation*}
A well-defined Green potential is again a positive superharmonic function.

\medskip
 
If the sectional curvature, $K_X \le -1$, then $\partial X$ is equipped with the visual metric,
 \begin{equation} \label{visual_metric}
 \rho(\xi,\eta):= e^{-{(\xi|\eta)}_o} \:, \text{ for all } \xi,\eta \in \partial X \:.
 \end{equation} 
For $r \in (0,1]$, we have the visual balls with radius $r$ and center $\xi \in \partial X$,
  \begin{equation} \label{visual_ball}
  \mathscr{B}(\xi,r) = \{\eta \in \partial X : \rho(\xi,\eta) < r\} \:.
  \end{equation}
  
\medskip

In the general case of a Harmonic manifold of purely exponential volume growth, although $\rho$ only defines a quasi-metric, the visibility measure $\lambda_o$  satisfies the following estimate for all $\xi \in \partial X$ and for all $r \in (0,1]$ :
  \begin{equation} \label{visual_measure_estimate}
  \lambda_o\left(\mathscr{B}(\xi,r)\right) \le e^{6\delta h} r^h \:,
  \end{equation}
  where $\delta$ is the Gromov hyperbolicity constant. However, one can get a metric by raising $\rho$ to suitable powers. For such spaces, one has the notion of `asymptotic upper curvature bound of X', denoted by $-s^2_0$ (see \cite{BF, S}). It is a critical exponent $s_0 \in (0,+\infty]$ such that for all $s \in (0,s_0)$, $\rho^{s}$ is Lipschitz metrizable, that is, there exists $C_2=C_2(s)>1$ and a metric $\rho_s$ such that
\begin{equation} \label{metric_relation}
\frac{1}{C_2} \rho_s \le \rho^{s} \le C_2 \rho_s \:.
\end{equation} 
For such a fixed $s \in (0,s_0)$, we work with the metric $\rho_s$. By $\mathscr{B}_s(\xi,r)$ we denote a visual ball in the metric $\rho_s$, with center $\xi$ and radius $r$. Then one has for the following containment relations:
\begin{equation} \label{contain_rel1}
\mathscr{B}_s\left(\xi, \frac{r^s}{C_2}\right) \subset \mathscr{B}(\xi,r) \subset \mathscr{B}_s\left(\xi,C_2 r^s\right) 
\end{equation}
and for $C_3=C^{1/s}_2>1$ ,
\begin{equation} \label{contain_rel2}
\mathscr{B}\left(\xi, \frac{r^{1/s}}{C_3}\right) \subset \mathscr{B}_s(\xi,r) \subset \mathscr{B}\left(\xi,C_3 r^{1/s}\right) \:. 
\end{equation}
\medskip

For $\xi \in \partial X$, following the definition of $\gamma_\xi$ mentioned in the introduction, we define the shadow of a ball $B=B(x,r) \subset X$ (viewed from $o$) at $\partial X$ to be the set
\begin{equation*}
\mathcal{O}_o(B) := \{\xi \in \partial X: \gamma_\xi(t) \in B\:, \text{ for some } t >0\} \:.
\end{equation*}
Using the fact that the underlying $X$ is Gromov $\delta$-hyperbolic, one has the following standard `shadow lemma' for balls  with sufficiently large radius:
\begin{lemma} \label{shadow_lemma}
There exists $C_4=C_4(\delta,s)>0$ such that for $r \in  \left(0, \min\left\{\frac{1}{C^s_4},\frac{1}{C_2}\right\}\right)$  and for all $\xi \in \partial X$, we have
\begin{equation*}
\mathcal{B}_s(\xi,r) \subset \mathcal{O}_o\left(B\left(\gamma_\xi\left(\log\left(\frac{1}{C_4\:r^{1/s}}\right)\right),1+\delta\right)\right) \:.
\end{equation*}
\end{lemma}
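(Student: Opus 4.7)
The plan is to use the Gromov hyperbolicity of $X$ to translate smallness of $\rho_s(\xi,\eta)$ into the geodesic ray $\gamma_\eta$ passing close to the prescribed point $\gamma_\xi(T)$, so that $\gamma_\eta$ meets the ball $B(\gamma_\xi(T),1+\delta)$ and hence $\eta$ lies in its shadow. The whole proof is essentially two reductions: a change of metric on $\partial X$ translating visual closeness into a Gromov-product bound, and a slim-triangle argument in the $\delta$-hyperbolic interior.

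First I would pass from $\rho_s$ to $\rho$ and thus to the Gromov product. If $\eta \in \mathscr{B}_s(\xi, r)$, the right-hand containment in (\ref{contain_rel2}) gives $\eta \in \mathscr{B}(\xi, C_3 r^{1/s})$, so by the definition (\ref{visual_metric}),
$$
(\xi|\eta)_o \;>\; \log\!\frac{1}{C_3\, r^{1/s}}.
$$
The assumption $r < 1/C_2 = 1/C_3^{s}$ is precisely what makes $C_3 r^{1/s} \le 1$, so that this Gromov-product estimate is nontrivial (and indeed $(\xi|\eta)_o$ is strictly positive).

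Next I would run the standard slim-triangle argument on the ideal triangle with vertices $o, \xi, \eta$. Its third side is a bi-infinite geodesic $\sigma$ from $\xi$ to $\eta$ whose nearest point to $o$ lies within some constant $c(\delta)$ of $(\xi|\eta)_o$; this is a classical fact for proper $\delta$-hyperbolic geodesic spaces, which $X$ is by Knieper's characterisation. For any $T$ satisfying $T \le (\xi|\eta)_o - c(\delta) - \delta$, every point $q \in \sigma$ obeys $d(\gamma_\xi(T), q) \ge d(o,q) - T > \delta$, so $\delta$-slimness of the triangle forces $\gamma_\xi(T)$ to lie within $\delta$ of the ray $\gamma_\eta$. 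In particular there is a parameter $t_0 > 0$ with
$$
\gamma_\eta(t_0) \;\in\; B(\gamma_\xi(T), \delta) \;\subset\; B(\gamma_\xi(T), 1+\delta),
$$
so $\eta \in \mathcal{O}_o(B(\gamma_\xi(T), 1+\delta))$. Setting $C_4 := C_3\, e^{c(\delta) + \delta}$ (so that $C_4 = C_4(\delta, s)$) and $T := \log(1/(C_4 r^{1/s}))$, the bound of Step~1 gives exactly $T < (\xi|\eta)_o - c(\delta) - \delta$, and the hypothesis $r < 1/C_4^{s}$ makes $T > 0$ so that $\gamma_\xi(T)$ is a valid interior point.

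The main obstacle is the clean derivation of the slim-triangle estimate with constants depending only on $\delta$, together with locating the foot-point of the bi-infinite geodesic $\sigma$ relative to $o$ up to an additive $c(\delta)$; both are standard in Gromov hyperbolic geometry, but require careful accounting of the additive constants, especially since $X$ need not be $\mathrm{CAT}(-1)$. Once this accounting is in place, the choice of $C_4$ is forced and the lemma follows by absorbing all remaining slack into the ``$1$'' of the ball radius $1+\delta$, which is why the statement allows this specific (rather than sharp) radius.
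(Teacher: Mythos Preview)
The paper does not prove this lemma; it is stated without proof as a ``standard shadow lemma'' that follows from Gromov $\delta$-hyperbolicity of $X$. Your argument is exactly the standard one and is correct: the containment \eqref{contain_rel2} converts $\eta\in\mathscr{B}_s(\xi,r)$ into the bound $(\xi|\eta)_o>\log\bigl(1/(C_3 r^{1/s})\bigr)$, and then slimness of the (partially ideal) triangle with vertices $o,\xi,\eta$, together with the fact that $d(o,\sigma)$ agrees with $(\xi|\eta)_o$ up to an additive $c(\delta)$, forces $\gamma_\xi(T)$ to lie in the $\delta$-neighbourhood of $\gamma_\eta$ once $T<(\xi|\eta)_o-c(\delta)-\delta$. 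One small remark on your stated ``main obstacle'': since $X$ is CAT(0), geodesics between (possibly ideal) points are unique and vary continuously with their endpoints, so the $\delta$-slimness of finite triangles passes to ideal ones with the same constant $\delta$; thus your use of the exact constant $\delta$ (rather than a multiple) is legitimate here, and the extra ``$1$'' in the radius $1+\delta$ is genuine slack.
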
 
\medskip

As mentioned in the Introduction, the sectional curvature of $X$ satisfies $K_X \ge -b^2$ for some $b>0$. If three points in $X$ lie on the same geodesic, then they are called {\it collinear}. For three points $x,y,z$ which are not collinear, we form the geodesic triangle $\triangle$ in $X$ by the geodesic segments $[x, y],\: [y, z],\: [z, x]$. A comparison triangle is a geodesic triangle $\overline{\triangle}$ in $\mathbb{H}^2(-b^2)$ formed by geodesic segments $[\overline{x}, \overline{y}],\: [\overline{y}, \overline{z}],\: [\overline{z}, \overline{x}]$ of the same lengths as those of $\triangle$ (such a triangle exists and is unique up to isometry). Let $\theta(y,z)$ denote the Riemannian angle between the points $y$ and $z$, subtended at $x$. The corresponding angle between $\overline{y}$ and $\overline{z}$ subtended at $\overline{x}$ is called the {\it comparison angle} of $\theta(y,z)$ in $\mathbb{H}^2(-b^2)$ and denoted by $\theta_b(y,z)$. Then by Alexandrov's angle comparison theorem,
\begin{equation} \label{finite_angle_comparison}
\theta_b(y,z) \le \theta(y,z) \:.
\end{equation}

\medskip

Consider the geodesics that join $x$ to $y$ and the one that joins $x$ to $z$. Now extend these geodesics. Then the extended infinite geodesic rays will hit $\partial X$ at two points, say $\xi$ and $\eta$ respectively. Now as points on these geodesics, say $y'$ and $z'$ in $X$ converge to $\xi$ and $\eta$, the comparison angles of $\theta(x',y')$ increase monotonically, and hence their limit exists. We define the comparison angle $\theta_b(\xi, \eta)$ to be this limit and in fact we have,
\begin{equation} \label{infinite_riemannian_angle_bound}
e^{-b(\xi|\eta)_x} = \sin \left(\frac{\theta_b(\xi,\eta)}{2}\right) \le \sin \left(\frac{\theta(\xi,\eta)}{2}\right) \:,
\end{equation}
where $\theta(\xi,\eta)$ is the Riemannian angle between $\xi$ and $\eta$ subtended at $x$.
 
\subsection{Hausdorff Outer Measure and Hausdorff Dimension}
In the setting of a general metric space, we now briefly recall the definitions of Hausdorff dimensions, Hausdorff outer measure and some of their important properties. These can be found in \cite{F}. 

\medskip

Let $(M,d)$ be a metric space. Then for $\varepsilon > 0$, an $\varepsilon$-cover of a set $E \subset M$ is a countable (or finite) collection of sets $\{U_i\}$ with 
\begin{equation*}
0 < diameter\left(U_i\right) \le \varepsilon \text{, for all }i \text{ such that } E \subset \displaystyle\bigcup_{i} U_i \:.
\end{equation*}
For $t \ge 0$, we recall that
\begin{equation*}
\mathcal{H}^t_\varepsilon(E) := \inf \left\{\displaystyle\sum_{i} {\left(diameter\left(U_i\right)\right)}^t : \{U_i\} \text{ is an } \varepsilon\text{-cover of } E\right\}\:.
\end{equation*}
Then the $t$-dimensional Hausdorff outer measure of $E$ is defined by,
\begin{equation*}
\mathcal{H}^t(E) := \displaystyle\lim_{\varepsilon \to 0} \mathcal{H}^t_\varepsilon(E) \:.
\end{equation*}
The above value remains unaltered if one only considers covers consisting of balls.

\medskip

The Hausdorff dimension of $E$ is defined by
\begin{equation*}
dim_{\mathcal{H}}E := \inf \left\{t \ge 0 : \mathcal{H}^t(E) < +\infty\right\} \:.
\end{equation*}
The following properties of Hausdorff dimension and Hausdorff outer measure  will be crucial:
\begin{itemize}
\item {\it Countable stability:} if $\{E_i\}_{i=1}^\infty$ is a countable sequence of sets in $(M,d)$, then 
\begin{equation*}
dim_{\mathcal{H}}\left(\displaystyle\bigcup_{i=1}^\infty E_i\right) =\displaystyle\sup_{i \in \N} \left\{dim_{\mathcal{H}} E_i\right\} \:.
\end{equation*}
\item {\it Non-increasing in dimension:} if $0 < t_1 \le t_2$ then for any $E$, $\mathcal{H}^{t_2}(E) \le \mathcal{H}^{t_1}(E)$\:. 
\end{itemize}

\section{Boundary behavior of Poisson integrals}
For any complex measure $\mu$ on $\partial X$, its Poisson integral $P[\mu]$ is a complex-valued harmonic function on $X$. In this section we will determine the size of the exceptional sets of such Poisson integrals along radial geodesic rays. The key to this analysis is an estimate in terms of a maximal function.
\subsection{Estimates of Maximal Function}
Let $0< \alpha_1 < \alpha_2 \le 1$ and $\xi \in \partial X$. Then for a complex measure $\mu$ on $\partial X$, we consider the following maximal function:
\begin{equation} \label{maximal_fn}
M_{\alpha_1,\alpha_2}[\mu](\xi) := \displaystyle\sup_{\alpha_1 \le r \le \alpha_2} \frac{|\mu|(\mathscr{B}(\xi,r))}{r^h} \:.
\end{equation}
When $d\mu = f d\lambda_o$ for some suitable function $f$ on $\partial X$, we will denote the corresponding maximal function by $M_{\alpha_1,\alpha_2}[f]$. 

\medskip

Next we see an estimate relating the Poisson integral of a complex measure with the maximal function corresponding to the measure. 
\begin{lemma} \label{maximal_fn_lem}
Let $\tau \ge 1,\: 0<\varepsilon \le 1$ and $\mu$ be a complex measure on $\partial X$. Then there exists a constant $C(h) > 0$ such that for all $t > \log(\tau / \varepsilon)$, one has for all $\xi \in \partial X$,
\begin{equation} \label{maximal_fn_ineq}
\left|P[\mu]\left(\gamma_{\xi}(t)\right)\right| \le C(h) \left\{e^{ht}|\mu|\left(\mathscr{B}\left(\xi,\tau e^{-t}\right)\right) + \frac{M_{\tau e^{-t},\varepsilon}[\mu](\xi)}{\tau^h}  + \frac{e^{-ht}}{\varepsilon^{2h}} |\mu|(\partial X) \right\} \:.
\end{equation}
\end{lemma}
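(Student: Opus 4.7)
The plan is to decompose $\partial X$ into three concentric regions around $\xi$:
\begin{equation*}
A_1 := \mathscr{B}(\xi, \tau e^{-t}), \quad A_2 := \mathscr{B}(\xi, \varepsilon) \setminus A_1, \quad A_3 := \partial X \setminus \mathscr{B}(\xi, \varepsilon).
\end{equation*}
The hypothesis $t > \log(\tau/\varepsilon)$ ensures $\tau e^{-t} < \varepsilon$, so these are properly nested, and the triangle inequality gives $|P[\mu](\gamma_\xi(t))|$ as a sum of three integrals of $P(\gamma_\xi(t), \cdot)$ against $d|\mu|$. I would control each piece with a different estimate on the Poisson kernel.

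The key preliminary is a pointwise bound on $P$ in terms of the visual quasi-metric $\rho$. Taking the boundary limit $y \to \eta$ in the definition of the Gromov product (permissible by the $\mathrm{CAT}(0)$ continuous extension recalled in Section 2) together with $B_\eta(o)=0$ gives the identity
\begin{equation*}
B_\eta(\gamma_\xi(t)) = t - 2(\gamma_\xi(t)|\eta)_o.
\end{equation*}
Note $(\gamma_\xi(t)|\xi)_o = t$, while the 1-Lipschitz property of $B_\eta$ forces $(\gamma_\xi(t)|\eta)_o \le t$. Applying the $\delta$-hyperbolic four-point inequality to $\xi, \eta, \gamma_\xi(t)$ and passing to the boundary yields $(\xi|\eta)_o \ge (\gamma_\xi(t)|\eta)_o - \delta$, whence
\begin{equation*}
P(\gamma_\xi(t), \eta) \le C(h)\, e^{-ht}\, \rho(\xi, \eta)^{-2h}, \qquad \eta \ne \xi.
\end{equation*}
On $A_1$ I would instead use the crude bound $P(\gamma_\xi(t), \eta) \le e^{ht}$ from $|B_\eta(\gamma_\xi(t))| \le t$, producing the first summand $e^{ht}|\mu|(\mathscr{B}(\xi, \tau e^{-t}))$ on the right of (\ref{maximal_fn_ineq}). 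On $A_3$, where $\rho \ge \varepsilon$, the kernel comparison immediately gives $C(h)\, e^{-ht}\varepsilon^{-2h}|\mu|(\partial X)$, which is the third summand.

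The core of the argument is the $A_2$-estimate. Applying the layer-cake formula to the non-negative function $\rho(\xi, \cdot)^{-2h}$ on $A_2$, via the substitution $y = r^{-2h}$, gives
\begin{equation*}
\int_{A_2} \rho(\xi, \eta)^{-2h}\, d|\mu|(\eta) = 2h \int_{\tau e^{-t}}^{\infty} |\mu|\bigl(A_2 \cap \mathscr{B}(\xi, r)\bigr)\, r^{-2h-1}\, dr.
\end{equation*}
I would split the range at $\varepsilon$. On $[\tau e^{-t}, \varepsilon]$ the maximal function bounds $|\mu|(\mathscr{B}(\xi, r)) \le M_{\tau e^{-t}, \varepsilon}[\mu](\xi)\, r^h$, so integrating $r^{-h-1}$ produces a constant multiple of $M_{\tau e^{-t}, \varepsilon}[\mu](\xi)\,(\tau e^{-t})^{-h}$. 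On $(\varepsilon, \infty)$ the measure is bounded by $|\mu|(\partial X)$ and integrating $r^{-2h-1}$ yields a constant multiple of $|\mu|(\partial X)\,\varepsilon^{-2h}$. Multiplying the resulting bound on $\int_{A_2} \rho^{-2h} d|\mu|$ by $C(h)\,e^{-ht}$, the first contribution becomes $C(h)\,M_{\tau e^{-t}, \varepsilon}[\mu](\xi)/\tau^h$ (the middle summand), and the second contribution is absorbed into the third summand.

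The principal obstacle is the Poisson-kernel comparison step: in our generality $\rho$ is only a quasi-metric, so one must invoke the $\mathrm{CAT}(0)$ continuous extension of the Gromov product together with the hyperbolic four-point condition to bound the discrepancy between $(\gamma_\xi(t)|\eta)_o$ and $(\xi|\eta)_o$ by a bounded additive error depending only on the geometry of $X$. Once this kernel comparison is in place, the remainder of the argument is a direct layer-cake computation following the outline of \cite{H}.
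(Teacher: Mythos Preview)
Your argument is correct and structurally parallel to the paper's, but the implementation differs in two places worth noting.

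First, for the middle region you use the layer-cake formula and a continuous integral in $r$, whereas the paper performs the equivalent discrete dyadic decomposition $\mathscr{B}^{(j)}=\mathscr{B}(\xi,2^j\tau e^{-t})\setminus\mathscr{B}(\xi,2^{j-1}\tau e^{-t})$, $1\le j\le m$, summing a geometric series in $2^{-jh}$. These are interchangeable; your layer-cake is arguably cleaner.

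Second, and more substantively, for the kernel comparison you invoke the $\delta$-hyperbolic four-point inequality to obtain $(\xi|\eta)_o\ge(\gamma_\xi(t)|\eta)_o-\delta$, which yields $P(\gamma_\xi(t),\eta)\le e^{2h\delta}\,e^{-ht}\rho(\xi,\eta)^{-2h}$. The paper instead observes that Gromov products are monotone non-decreasing along geodesics (a direct consequence of the triangle inequality, not of hyperbolicity), giving the sharper $(\gamma_\xi(t)|\eta)_o\le(\xi|\eta)_o$ with \emph{no} additive error. This is why the paper's constant is genuinely $C(h)$ as stated, while your route produces $C(h,\delta)$. For the downstream applications this is harmless, but it does not quite match the lemma as written; replacing your four-point step by the monotonicity observation fixes this at no cost.
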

\begin{proof}
Fix $\xi \in \partial X$ and $t > \log(\tau / \varepsilon)$. Then note that $\tau e^{-t} < \varepsilon$. Hence there exists a largest non-negative integer $m$ such that
\begin{equation*}
2^m \tau e^{-t} \le \varepsilon \:.
\end{equation*}
Let 
\begin{eqnarray*}
&\mathscr{B}^{(0)}& = \mathscr{B}\left(\xi,\tau e^{-t}\right) \:, \\
&\mathscr{B}^{(j)}& = \mathscr{B}\left(\xi, 2^j \tau e^{-t}\right) \setminus \mathscr{B}\left(\xi, 2^{j-1} \tau e^{-t}\right) ,\:\text{for } 1 \le j \le m \:,\\
&\mathscr{B}^{(m+1)}& = \partial X \setminus \mathscr{B}\left(\xi, 2^m \tau e^{-t}\right) \:.
\end{eqnarray*}
Now 
\begin{equation*}
\left|P[\mu]\left(\gamma_{\xi}(t)\right)\right| \le \displaystyle\sum_{j=0}^{m+1} I_j \:,
\end{equation*}
where 
\begin{equation*}
I_j = \int_{\mathscr{B}^{(j)}} e^{-hB_{\eta}\left(\gamma_{\xi}(t)\right)} \: d|\mu|(\eta) \:,\:\text{for } 0 \le j \le m+1\:.
\end{equation*}
We note that by triangle inequality, for all $\eta \in \partial X$,
\begin{equation*}
B_{\eta}\left(\gamma_{\xi}(t)\right) = \displaystyle\lim_{t' \to \infty} \left(d\left(\gamma_{\xi}(t), \gamma_\eta(t')\right)- d\left(o, \gamma_\eta(t')\right)\right) \ge -d\left(o, \gamma_\xi(t)\right)=-t \:. 
\end{equation*}
Hence, 
\begin{equation*}
I_0 \le \int_{\mathscr{B}^{(0)}} e^{ht} \:d|\mu|(\eta) = e^{ht} \:|\mu|\left(\mathscr{B}\left(\xi,\tau e^{-t}\right)\right) \:. 
\end{equation*}

\medskip

Next we note that Gromov products are monotonically non-decreasing along geodesics, which is a simple consequence of the triangle inequality. Hence in particular, for all $\eta \in \partial X$ such that $\eta \ne \xi$, one has
\begin{equation*}
\displaystyle\lim_{t' \to \infty} \left(\gamma_{\xi}(t)|\gamma_{\eta}(t')\right)_o \le \left(\xi|\eta\right)_o \:.
\end{equation*}
Combining the above with the facts that
\begin{itemize}
\item  $B_{\eta}\left(\gamma_{\xi}(t)\right) = t - 2 \displaystyle\lim_{t' \to \infty} \left(\gamma_{\xi}(t) | \gamma_\eta(t')\right)_o \:,$
\item $e^{-{(\xi|\eta)}_o} \ge 2^{j-1} \tau e^{-t} \text{ when } \eta \in \mathscr{B}^{(j)}\:,\text{ for } 1 \le j \le m\:,$
\end{itemize}
it follows that
\begin{eqnarray*}
I_j &\le & \int_{\mathscr{B}^{(j)}} e^{-ht}\: e^{2h{(\xi|\eta)}_o}\: d|\mu|(\eta) \\
    & \le & \int_{\mathscr{B}^{(j)}} \frac{e^{-ht}}{{\left(2^{j-1} \tau e^{-t}\right)}^{2h}} \:d|\mu|(\eta) \\
    & \le & \frac{|\mu|\left(\mathscr{B}\left(\xi, 2^j \tau e^{-t}\right) \right)}{{\left(2^{j-2} \tau\right)}^h {\left(2^j \tau e^{-t}\right)}^h} \\
    & \le & \frac{1}{{\left(2^{j-2} \tau\right)}^h} \:M_{\tau e^{-t}, \varepsilon}[\mu](\xi) \:.
\end{eqnarray*}
Therefore, there exists $C(h) > 0$ such that,
\begin{equation*}
\displaystyle\sum_{j=1}^m I_j \le \left(\displaystyle\sum_{j=1}^m \frac{1}{{\left(2^{j-2}\right)}^h}\right) \frac{M_{\tau e^{-t}, \varepsilon}[\mu](\xi)}{\tau^h} \le C(h)\:\frac{M_{\tau e^{-t}, \varepsilon}[\mu](\xi)}{\tau^h} \:.
\end{equation*}
Repeating the same argument as above, we get
\begin{equation} \label{maximal_fn_ineq_last_step}
I_{m+1} \le \int_{\mathscr{B}^{(m+1)}} \frac{e^{-ht}}{{\left(2^{m} \tau e^{-t}\right)}^{2h}} \:d|\mu|(\eta) \:.
\end{equation}
Now by the choice of $m$, 
\begin{equation*}
2^m \tau e^{-t} > \frac{\varepsilon}{2} \:.
\end{equation*}
Plugging the above in (\ref{maximal_fn_ineq_last_step}), it follows that
\begin{equation*}
I_{m+1} \le  \int_{\mathscr{B}^{(m+1)}} \frac{2^{2h}\:e^{-ht}}{{\varepsilon}^{2h}} \:d|\mu|(\eta) \le \frac{2^{2h}\:e^{-ht}}{\varepsilon^{2h}} |\mu|\left(\partial X\right) \:.
\end{equation*}
Then summing up the above estimates, we get (\ref{maximal_fn_ineq}).
\end{proof}
Lemma \ref{maximal_fn_lem} has the following consequences.
\begin{corollary} \label{cor1}
Let $0<\varepsilon \le 1$ and $\mu$ be a complex measure on $\partial X$. Then there exists a constant $C(h) > 0$ such that for all $t > \log(1 / \varepsilon)$, one has for all $\xi \in \partial X$,
\begin{equation*} 
\left|P[\mu]\left(\gamma_{\xi}(t)\right)\right| \le C(h) \left\{ 2M_{ e^{-t},\varepsilon}[\mu](\xi)  + \frac{e^{-ht}}{\varepsilon^{2h}} |\mu|(\partial X) \right\} \:.
\end{equation*}
\end{corollary}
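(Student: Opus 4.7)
The plan is to deduce Corollary \ref{cor1} as the immediate specialization of Lemma \ref{maximal_fn_lem} to the case $\tau=1$, after absorbing the first term into the maximal function.

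First I would simply set $\tau = 1$ in Lemma \ref{maximal_fn_lem}. The hypothesis $\tau \geq 1$ is satisfied trivially, and the threshold $t > \log(\tau/\varepsilon)$ becomes exactly $t > \log(1/\varepsilon)$, which matches the hypothesis of the corollary. Substituting yields
\begin{equation*}
\bigl|P[\mu](\gamma_\xi(t))\bigr| \le C(h)\left\{ e^{ht}\,|\mu|\bigl(\mathscr{B}(\xi, e^{-t})\bigr) + M_{e^{-t},\varepsilon}[\mu](\xi) + \frac{e^{-ht}}{\varepsilon^{2h}}\,|\mu|(\partial X) \right\}.
\end{equation*}

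Next I would observe that the first term inside the braces is already controlled by the maximal function. Indeed, since $t > \log(1/\varepsilon)$ we have $e^{-t} < \varepsilon$, so the value $r = e^{-t}$ lies in the range $[e^{-t},\varepsilon]$ over which the supremum defining $M_{e^{-t},\varepsilon}[\mu](\xi)$ is taken. Therefore
\begin{equation*}
e^{ht}\,|\mu|\bigl(\mathscr{B}(\xi, e^{-t})\bigr) = \frac{|\mu|(\mathscr{B}(\xi,e^{-t}))}{(e^{-t})^h} \le M_{e^{-t},\varepsilon}[\mu](\xi).
\end{equation*}

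Combining the two displayed estimates, the first and second terms merge into $2\,M_{e^{-t},\varepsilon}[\mu](\xi)$, and the third term is left unchanged, giving exactly the bound claimed in the corollary. There is no real obstacle here: the statement is a cosmetic repackaging of Lemma \ref{maximal_fn_lem}, designed so that the radial geodesic depth $t$ enters only through the single maximal function and the global total variation term.
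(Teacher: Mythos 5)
Your proof is correct and follows exactly the paper's approach: take $\tau=1$ in Lemma \ref{maximal_fn_lem} and absorb the first term into the maximal function, which the paper leaves implicit but you spell out. Nothing to add.
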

\begin{proof}
The Corollary follows by taking $\tau=1$ in Lemma \ref{maximal_fn_lem}.
\end{proof}
\begin{corollary} \label{cor2}
Let $\xi \in \partial X,\:\tau > 1$ and $t>\log(\tau)$. If $f$ is a non-negative measurable function on $\partial X$ such that $f \equiv 1$ on $\mathscr{B}\left(\xi,\tau e^{-t}\right)$ and $f \le 1$ on $\partial X$, then there exists $C_5=C_5(h, \delta)>0$ (where $\delta$ is the Gromov hyperbolicity constant) such that
\begin{equation*}
P[f]\left(\gamma_{\xi}(t)\right) \ge 1 - \frac{C_5}{\tau^h} \:.
\end{equation*}
\end{corollary}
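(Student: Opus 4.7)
\textbf{Proof plan for Corollary \ref{cor2}.} The plan is to use the identity $P[\lambda_o]\equiv 1$ recorded in Section 2 to convert the lower bound into an upper bound, and then to reuse the dyadic decomposition from the proof of Lemma \ref{maximal_fn_lem}. Since $0\le f\le 1$ on $\partial X$ and $f\equiv 1$ on $\mathscr{B}(\xi,\tau e^{-t})$, the non-negative function $g:=1-f$ vanishes on $\mathscr{B}(\xi,\tau e^{-t})$ and is bounded above by $\chi_{\partial X\setminus \mathscr{B}(\xi,\tau e^{-t})}$. The identity $P[\lambda_o]\equiv 1$ rewrites as
\[
P[f](\gamma_\xi(t)) \;=\; 1 - P[g](\gamma_\xi(t)),
\]
so it suffices to establish an upper bound $P[g](\gamma_\xi(t))\le C_5/\tau^h$.

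To do so, let $m$ be the largest non-negative integer with $2^m\tau e^{-t}\le 1$ (well-defined because $t>\log\tau$) and decompose the support of $g$ as
\[
\partial X \setminus \mathscr{B}(\xi,\tau e^{-t}) \;=\; \bigsqcup_{j=1}^{m}\bigl[\mathscr{B}(\xi, 2^j\tau e^{-t})\setminus \mathscr{B}(\xi,2^{j-1}\tau e^{-t})\bigr] \,\sqcup\, \bigl[\partial X\setminus\mathscr{B}(\xi,2^m\tau e^{-t})\bigr].
\]
On the $j$-th annulus ($1\le j\le m$), the monotonicity of Gromov products along geodesics combined with the identity $B_\eta(\gamma_\xi(t))=t-2\lim_{t'\to\infty}(\gamma_\xi(t)|\gamma_\eta(t'))_o$ yields $P(\gamma_\xi(t),\eta)\le e^{ht}(2^{j-1}\tau)^{-2h}$, exactly as in Lemma \ref{maximal_fn_lem}, while the measure estimate \eqref{visual_measure_estimate} supplies $\lambda_o(\mathscr{B}(\xi,2^j\tau e^{-t}))\le e^{6\delta h}(2^j\tau e^{-t})^h$. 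Multiplying and summing the resulting geometric series in $j$ produces an annular contribution of the form $C(h,\delta)\cdot\tau^{-h}$.

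For the outermost piece $\partial X\setminus\mathscr{B}(\xi,2^m\tau e^{-t})$, the maximality of $m$ forces $2^m\tau e^{-t}>1/2$, and the same Gromov-product argument together with the probability-measure property $\lambda_o(\partial X)=1$ (inherent in $P[\lambda_o]\equiv 1$) and the bound $e^{-ht}<\tau^{-h}$ produce a contribution of the form $4^h/\tau^h$. Combining both pieces absorbs all constants into a single $C_5=C_5(h,\delta)$. I do not expect a substantive obstacle; the only mildly delicate point is the degenerate range $\log\tau<t\le\log(2\tau)$, for which $m=0$: there are no intermediate annuli and the full $\tau^{-h}$ decay must come from the single outer term, but the estimate $e^{ht}<(2\tau)^h$ together with $\lambda_o(\partial X)=1$ still delivers the required bound directly.
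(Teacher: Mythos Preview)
Your proposal is correct and follows essentially the same strategy as the paper: set $g:=1-f$, use $P[\lambda_o]\equiv 1$ to reduce to bounding $P[g](\gamma_\xi(t))$, and then exploit the dyadic annular decomposition together with the visual measure estimate \eqref{visual_measure_estimate}. The only difference is packaging: the paper applies Lemma~\ref{maximal_fn_lem} directly (with $\varepsilon=1$ and $d\mu=g\,d\lambda_o$), so that the first term vanishes, the third term is $e^{-ht}\lambda_o(\partial X)\le \tau^{-h}$, and it remains only to bound $M_{\tau e^{-t},1}[\lambda_o](\xi)\le e^{6\delta h}$ via \eqref{visual_measure_estimate}; you instead re-run the proof of Lemma~\ref{maximal_fn_lem} inline. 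Both routes give the same $C_5(h,\delta)$, and your explicit handling of the $m=0$ edge case is a nice sanity check that the lemma covers automatically.
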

\begin{proof}
Let $t>\log(\tau)$. We consider 
\begin{equation*}
g:= 1-f \:.
\end{equation*}
Then $g$ is a measurable, non-negative function on $\partial X$ such that
\begin{equation*}
g \equiv 0 \text{ on } \mathscr{B}\left(\xi,\tau e^{-t}\right) \text{ and } g \le 1 \text{ on } \partial X\:.
\end{equation*}
Then applying Lemma \ref{maximal_fn_lem}, for $d\mu = g\: d\lambda_o$ and $\varepsilon=1$, we get that there exists $C(h)>0$ such that, 
\begin{eqnarray} \label{cor2_eq}
P[g]\left(\gamma_\xi(t)\right) &\le & C(h) {\left(\frac{1}{\tau}\right)}^h M_{\tau e^{-t},1}[\mu](\xi) \nonumber\\
&\le & C(h) {\left(\frac{1}{\tau}\right)}^h M_{\tau e^{-t},1}[\lambda_o](\xi) 
\end{eqnarray}
Now using (\ref{visual_measure_estimate}) and (\ref{maximal_fn}), it follows that
\begin{equation*}
M_{\tau e^{-t},1}[\lambda_o](\xi) = \displaystyle\sup_{\tau e^{-t} \le r \le 1} \frac{\lambda_o\left(\mathscr{B}(\xi,r)\right)}{r^h} \le e^{6\delta h} \:.
\end{equation*}
Then plugging the above in (\ref{cor2_eq}), one has for some $C(h, \delta)>0$,
\begin{equation*}
P[g]\left(\gamma_\xi(t)\right) \le \frac{C(h,\delta)}{\tau^h} \:. 
\end{equation*}
Thus,
\begin{equation*}
P[f]\left(\gamma_{\xi}(t)\right) = 1 - P[g]\left(\gamma_{\xi}(t)\right) \ge 1 - \frac{C(h,\delta)}{\tau^h} \:. 
\end{equation*}
\end{proof}

\subsection{Upper bound on the Hausdorff dimension}

\begin{proof}[Proof of Theorem \ref{poisson_thm}]
For $L>0$, we set
\begin{equation} \label{defn_EL}
E^L_{\beta}(P[\mu]) :=\left\{\xi \in \partial X : \displaystyle\limsup_{t \to +\infty} e^{-\beta t} \left|P[\mu]\left(\gamma_{\xi}(t)\right)\right| > L\right\} \:.
\end{equation}

\medskip

Our strategy will be to get some useful estimates on the $(h-\beta)/s$\:-dimensional outer Hausdorff measure of the set defined in (\ref{defn_EL}). First we choose and fix $\varepsilon \in (0,1)$ and $\xi \in E^L_{\beta}(P[\mu])$. Then by Corollary \ref{cor1} there exists $C(h) > 0$ such that
\begin{equation*}
C(h)L < \displaystyle\limsup_{t \to +\infty} e^{-\beta t}\: M_{e^{-t}, \varepsilon}[\mu](\xi).
\end{equation*}
Hence, there exists $t_\xi \in (0,+\infty)$ satisfying $e^{-t_{\xi}} \le \varepsilon$ such that
\begin{equation} \label{poisson_pf_eq}
C(h)L < e^{-\beta t_\xi}\: \frac{|\mu|\left(\mathscr{B}\left(\xi, e^{-t_\xi}\right)\right)}{e^{-ht_\xi}} \le  e^{-\beta t_\xi}\: \frac{|\mu|\left(\mathscr{B}_s\left(\xi,C_2\: e^{-st_\xi}\right)\right)}{e^{-ht_\xi}}  \:.
\end{equation}
Now by Vitali 5-covering Lemma, there exist countably many visual balls $\{\mathscr{B}_s\left(\xi_j, r_j\right)\}_{j=1}^\infty$ satisfying (\ref{poisson_pf_eq}) such that
\begin{itemize}
\item $r_j := C_2\:e^{-st_{\xi_j}} \le C_2 \:\varepsilon^s$\:, for all $j \in \N$,
\item $\mathscr{B}_s\left(\xi_j, r_j\right) \cap \mathscr{B}_s\left(\xi_k, r_k\right) = \emptyset$ for all $j \ne k$,
\item $E^L_{\beta}(P[\mu]) \subset \displaystyle\bigcup_{j=1}^\infty \mathscr{B}_s\left(\xi_j, 5r_j\right)$ \:.
\end{itemize} 

Then by (\ref{poisson_pf_eq}), there exists $C(h,\beta,s)>0$ such that
\begin{eqnarray*}
\displaystyle\sum_{j=1}^\infty {\left(diameter\left(\mathscr{B}_s\left(\xi_j, 5r_j\right)\right)\right)}^{(h-\beta)/s}  & \le & \left(\frac{C(h,\beta,s)}{L}\right) \displaystyle\sum_{j=1}^\infty |\mu|\left(\mathscr{B}_s\left(\xi_j, r_j\right)\right) \\
&=&  \left(\frac{C(h,\beta,s)}{L}\right)  |\mu|\left(\bigcup_{j=1}^\infty \mathscr{B}_s\left(\xi_j, r_j\right)\right) \\
&\le &  \left(\frac{C(h, \beta, s)}{L}\right)  |\mu|(\partial X) \:.
\end{eqnarray*}

We note that the constant appearing in the right hand side of the last inequality is independent of the choice of $\varepsilon$ and hence letting $\varepsilon \to 0$, we get that 
\begin{equation} \label{poisson_pf_eq2}
\mathcal{H}^{(h-\beta)/s}\left(E^L_{\beta}(P[\mu])\right) \le \frac{C(h, \beta, s)}{L} |\mu|\left(\partial X\right) < +\infty \:. 
\end{equation}

As $E^\infty_{\beta}(P[\mu]) \subset E^L_{\beta}(P[\mu])$ for all $L>0$, it follows that
\begin{equation*}
\mathcal{H}^{(h-\beta)/s}\left(E^\infty_{\beta}(P[\mu])\right) =0 \:.
\end{equation*}

Finally combining countable stability of the Hausdorff dimension and (\ref{poisson_pf_eq2}) we obtain,
\begin{equation*}
dim_{\mathcal{H}} E_\beta(P[\mu]) = \displaystyle\sup_{m \in \N} \left\{dim_{\mathcal{H}} E^{\frac{1}{m}}_\beta(P[\mu])\right\} \le (h-\beta)/s \:.
\end{equation*}
\end{proof}

\subsection{The sharpness result}

\begin{proof}[Proof of Theorem \ref{poisson_sharp_thm}]
Since $\mathcal{H}^{(h-\beta)/s}(E)=0$, for any $m \in \N$, there exists a covering of $E$ by visual balls $\{\mathscr{B}^{(m,j)}_s\}_{j=1}^\infty$ such that
\begin{equation} \label{poisson_sharp_eq1}
\displaystyle\sum_{j=1}^\infty {\left(diameter\left(\mathscr{B}^{(m,j)}_s\right)\right)}^{(h-\beta)/s} < 2^{-m} \:.
\end{equation}

If $\mathscr{B}_s$ is a visual ball with center $\eta \in \partial X$ and with radius $r$, then for notation convenience, $2\mathscr{B}_s$ will denote the visual ball with the same center $\eta$ and twice the radius, that is, $2r$.

\medskip

Now we define, 
\begin{equation} \label{defn_f_poisson}
f:= \displaystyle\sum_{j,m} m\:{\left(diameter\left(\mathscr{B}^{(m,j)}_s\right)\right)}^{-(\beta/s)}\: \chi_{2\mathscr{B}^{(m,j)}_s} \:.
\end{equation}

Then by (\ref{visual_measure_estimate}) and (\ref{poisson_sharp_eq1}), it follows that for some $C(h,\delta,s)>0$,
\begin{eqnarray*}
\int_{\partial X} f d \lambda_o & \le & \displaystyle\sum_{j,m} m \: {\left(diameter\left(\mathscr{B}^{(m,j)}_s\right)\right)}^{-(\beta/s)} \:\lambda_o\left(2\mathscr{B}^{(m,j)}_s\right) \\
& \le & C(h,\delta,s) \displaystyle\sum_{j,m} m \:{\left(diameter\left(\mathscr{B}^{(m,j)}_s\right)\right)}^{(h-\beta)/s} \\
& < &  C(h,\delta,s) \:\displaystyle\sum_{m=1}^\infty \frac{m}{2^m} \\
& < & +\infty \:.
\end{eqnarray*}

Thus $f d\lambda_o$ defines a finite, positive Borel measure.

\medskip

Now let $\xi \in E$ and fix $m \in \N$. Then there exists $j_m \in \N$ such that $\xi \in \mathscr{B}^{(m,j_m)}_s$. If $r_m$ is the radius of $\mathscr{B}^{(m,j_m)}_s$, then $\mathscr{B}_s(\xi,r_m) \subset 2\mathscr{B}^{(m,j_m)}_s$. Then by Corollary \ref{cor2}, one has
\begin{equation}\label{poisson_sharp_eq2}
P\left[\chi_{2\mathscr{B}^{(m,j_m)}_s}\right](\gamma_{\xi}(t)) \ge P\left[\chi_{\mathscr{B}_s(\xi,r_m)}\right](\gamma_{\xi}(t)) \ge P\left[\chi_{\mathscr{B}\left(\xi,\frac{r^{1/s}_m}{C_3}\right)}\right](\gamma_{\xi}(t)) \ge  \frac{1}{2} \:,  
\end{equation}
whenever (following the statement of Corollary \ref{cor2})
\begin{itemize}
\item $\tau^h > \max \left\{1,\: 2C_5\right\}$,
\item $t > \log(\tau)$,
\item $\tau e^{-t} \le \frac{r^{1/s}_m}{C_3}$\:.
\end{itemize}

Hence choosing $\tau (=\tau(h,\delta))>0$ sufficiently large and setting 
\begin{equation*}
t_m := \log(C_3\tau) + \frac{1}{s}\log\left(\frac{1}{r_m}\right) \:,
\end{equation*}
we have by (\ref{poisson_sharp_eq2}),
\begin{eqnarray*}
P[f](\gamma_\xi(t_m)) & \ge & m {\left(diameter\left(\mathscr{B}^{(m,j_m)}_s\right)\right)}^{-(\beta/s)} P\left[\chi_{2\mathscr{B}^{(m,j_m)}_s}\right]\left(\gamma_{\xi}(t_m)\right) \\
& \ge & m \left(2^{-\left(\frac{\beta}{s} +1\right)} C^{-\beta}_3 \tau^{-\beta} \right)  e^{\beta t_m} \:. 
\end{eqnarray*}

Hence, there exists $C(h,\delta,\beta,s) >0$ such that for all $m \in \N$,
\begin{equation} \label{poisson_sharp_eq3}
e^{-\beta t_m} P[f](\gamma_\xi(t_m)) \ge C(h,\delta,\beta,s)\: m \:.
\end{equation}

Now by (\ref{poisson_sharp_eq1}),
\begin{equation*}
t_m > \log\left(2^{1/s}C_3\tau\right) +  \frac{m}{h-\beta}\log(2) \to +\infty \text{ as } m \to +\infty \:.
\end{equation*}

Hence (\ref{poisson_sharp_eq3}) gives the result.
\end{proof}

\section{Riesz decomposition for subharmonic functions}
\subsection{Riesz measure}
In this subsection our aim would be to prove the existence of a unique Radon measure on $X$ corresponding to a subharmonic function:
\begin{proposition} \label{riesz_meas_prop}
If $f$ is subharmonic on $X$, then there exists a unique Radon measure $\mu_f$ on $X$ such that 
\begin{equation*}
\int_X \psi \:d\mu_f = \int_X f \Delta \psi \:dvol \:, \text{ for all } \psi \in C^2_c(X)\:.
\end{equation*}
\end{proposition}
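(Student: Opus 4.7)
The approach is to realize $\mu_f$ as the distributional Laplacian of $f$. Define $T: C^\infty_c(X) \to \R$ by $T(\psi) := \int_X f\, \Delta\psi \, dvol$, which is a well-defined distribution since subharmonic functions are locally integrable. The heart of the proof is to show that $T$ is a \emph{positive} distribution, i.e., $T(\psi) \geq 0$ whenever $\psi \geq 0$. Once this is established, the classical theorem of Schwartz that positive distributions are Radon measures produces a unique Radon measure $\mu_f$ representing $T$ on $C^\infty_c(X)$. The identity then extends to arbitrary $\psi \in C^2_c(X)$ by approximating $\psi$ in $C^2$-norm by a sequence in $C^\infty_c(X)$ with supports in a common compact set; both sides of the defining identity pass to the limit. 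Uniqueness of $\mu_f$ among Radon measures satisfying the identity on $C^2_c$ follows from the density of $C^2_c(X)$ in $C_c(X)$.

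To prove positivity of $T$, the strategy is mollification. Choose a non-negative radial $\phi_\varepsilon \in C^\infty_c(X)$ supported in $B(o,\varepsilon)$ with $\int_X \phi_\varepsilon \, dvol = 1$, and set $f_\varepsilon := f * \phi_\varepsilon$. Standard arguments give $f_\varepsilon \in C^\infty(X)$ and $f_\varepsilon \to f$ in $L^1_{\mathrm{loc}}$. The crucial claim is that $f_\varepsilon$ is itself subharmonic. By the definition (\ref{convolution}) of convolution, in polar coordinates about $x$ one has
\[
f_\varepsilon(x) = c(n) \int_0^\infty \phi^{\mathrm{rad}}_\varepsilon(r)\, A(r)\, M_r f(x)\, dr,
\]
where $M_r f(x)$ is the spherical mean of $f$ over the geodesic sphere of radius $r$ about $x$. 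On a harmonic manifold, each $M_r$ can be viewed as convolution with a radial distribution, so by Lemma \ref{props_of_conv} the operators $M_r$ and $M_R$ commute. Combined with the subharmonicity of $f$ (which gives $M_R f \geq f$ pointwise, hence $M_r(M_R f - f) \geq 0$), one obtains
\[
M_R f_\varepsilon(x) - f_\varepsilon(x) = c(n) \int_0^\infty \phi^{\mathrm{rad}}_\varepsilon(r)\, A(r)\, M_r(M_R f - f)(x)\, dr \geq 0
\]
for every $R > 0$. Thus $f_\varepsilon$ satisfies the submean inequality (\ref{submvp}); being smooth, this forces $\Delta f_\varepsilon \geq 0$ pointwise. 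Integration by parts then yields $\int_X f_\varepsilon \, \Delta\psi\, dvol = \int_X (\Delta f_\varepsilon)\,\psi\, dvol \geq 0$ for non-negative $\psi \in C^\infty_c(X)$, and letting $\varepsilon \to 0$ via $L^1_{\mathrm{loc}}$-convergence (using that $\Delta\psi$ has compact support) gives $T(\psi) \geq 0$.

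The main obstacle is the preservation of subharmonicity under mollification, which in the Euclidean setting is immediate from the fact that $\Delta$ commutes with convolution thanks to translation invariance. In our Harmonic-manifold setting there is no full translation invariance; instead one must exploit the specific features that $\Delta$ commutes with translations of \emph{radial} functions (Lemma \ref{laplacian_commutes_translation}) and that convolutions of radial functions are commutative and associative (Lemma \ref{props_of_conv}), which together yield the commutativity of the spherical-mean operators. Once this geometric input is in place, everything else is routine distributional bookkeeping and a standard application of the Riesz representation theorem.
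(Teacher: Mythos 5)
Your proposal follows essentially the same route as the paper: mollify $f$, show the mollified functions are subharmonic, deduce positivity of the distribution $\psi \mapsto \int_X f\,\Delta\psi\,dvol$, and invoke the Riesz representation theorem. The one place where you deviate — and where a gap opens up — is in how you establish that the mollification $f_\varepsilon = f * \phi_\varepsilon$ is subharmonic.

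You reduce subharmonicity of $f_\varepsilon$ to commutativity of the spherical mean operators $M_r$ and $M_R$, citing Lemma~\ref{props_of_conv} as the source. But Lemma~\ref{props_of_conv} is stated for convolution with radial \emph{functions}, whereas $M_r$ is convolution against the normalized surface measure of a geodesic sphere, which is a radial \emph{measure} that is singular with respect to $dvol$. So the lemma as stated does not literally apply; you are implicitly invoking the commutativity of spherical mean operators on harmonic manifolds, which is true (it is essentially equivalent to the existence of the density function $A$) but would need its own justification or a limiting argument approximating the spherical measure by radial $L^1$ kernels. The paper sidesteps this entirely by testing subharmonicity against ball averages rather than sphere averages: with $\Omega_r := \chi_{B(o,r)}/vol(B(o,r))$, which is a genuine radial $L^1$ function, associativity and commutativity of convolution from Lemma~\ref{props_of_conv} give $(f*h_j)*\Omega_r = (f*\Omega_r)*h_j \ge f*h_j$, and then Lemma~\ref{Omega_r} (the infinitesimal Laplacian via ball averages) closes the loop without ever needing to commute two spherical-mean operators. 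Replacing your $M_r$, $M_R$ manipulation with the $\Omega_r$ version would make your argument fully rigorous with the tools at hand. The rest — $L^1_{\mathrm{loc}}$ convergence of $f_\varepsilon \to f$ in place of the paper's dominated-convergence argument via the non-increasing sequence $f*h_j$, and the passage from $C^\infty_c$ to $C^2_c$ and then to $C_c$ — is standard and fine.
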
 
\begin{definition}
For a subharmonic function $f$ on $X$, the unique Radon measure $\mu_f$ on $X$ obtained in the conclusion of Proposition \ref{riesz_meas_prop} is called the Riesz measure of $f$. 
\end{definition}
The following lemmas will be important for the proof of Proposition \ref{riesz_meas_prop}.
\begin{lemma} \label{non_decr_mvp}
Let $f$ be a $C^2$ subharmonic function on $X$. Then for all $x \in X$ and for all $0<r_1 \le r_2$, one has
\begin{equation*}
\int_{T^1_x X} f\left(\gamma_{x,v}(r_1)\right) d\theta_x(v) \le \int_{T^1_x X} f\left(\gamma_{x,v}(r_2)\right) d\theta_x(v) \:.
\end{equation*}
\end{lemma}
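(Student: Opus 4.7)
The natural quantity to study is the normalized spherical mean
$$F(r) := \int_{T^1_x X} f\bigl(\gamma_{x,v}(r)\bigr) \, d\theta_x(v),$$
and I would prove the stronger statement that $F$ is $C^1$ on $(0,\infty)$ with $F'(r) \ge 0$. Integrating this from $r_1$ to $r_2$ then immediately yields the desired inequality.

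The plan is to compute $F'(r)$ via the divergence theorem on the geodesic ball $B(x,r)$. Since $X$ is Hadamard, $\exp_x$ is a global diffeomorphism, so polar coordinates around $x$ cover all of $X$ and the Riemannian volume form pulls back as $A(s)\,ds\,dc_x(v)$, where $c_x$ is the unnormalized canonical measure on $T^1_x X$ (so $dc_x = \omega_{n-1} d\theta_x$). On the geodesic sphere $S(x,r)$, the surface measure is $A(r)\,dc_x(v)$, and the outward unit normal is the radial direction $\gamma'_{x,v}(r)$. Differentiating under the integral (which is justified since $f \in C^2$) gives
$$F'(r) = \int_{T^1_x X} \bigl\langle \nabla f(\gamma_{x,v}(r)), \gamma'_{x,v}(r) \bigr\rangle \, d\theta_x(v) = \frac{1}{\omega_{n-1} A(r)} \int_{S(x,r)} \partial_\nu f \, dS.$$
Applying the divergence theorem to the vector field $\nabla f$ on $B(x,r)$ then yields
$$F'(r) = \frac{1}{\omega_{n-1} A(r)} \int_{B(x,r)} \Delta f \, dvol.$$

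Since $f$ is $C^2$ and subharmonic, $\Delta f \ge 0$ pointwise (this equivalence for smooth functions is recalled just before Lemma $2.3$), so $F'(r) \ge 0$ for all $r>0$. Hence $F$ is non-decreasing on $(0,\infty)$, proving the lemma.

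There is no real obstacle here; the main care needed is bookkeeping the polar-coordinate normalization (the factor $\omega_{n-1}$ and the density $A$) correctly, and ensuring differentiation under the integral is valid, which follows from the $C^2$ regularity of $f$ and smoothness of the exponential map. Everything else is a direct application of Green's identity in geodesic polar coordinates on a Hadamard manifold.
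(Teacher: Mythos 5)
Your proof is correct, but it takes a genuinely different route from the paper's. You establish the stronger differential statement $F'(r) \ge 0$ by differentiating the spherical mean under the integral and converting the flux integral $\int_{S(x,r)} \partial_\nu f \, dS$ into $\int_{B(x,r)} \Delta f \, dvol$ via the divergence theorem, exploiting $\Delta f \ge 0$. The paper instead argues by comparison of harmonic extensions: with $u_i$ the harmonic extension of $f\vert_{S(x,r_i)}$ to $B(x,r_i)$, subharmonicity gives $f \le u_2$ on $B(x,r_2)$, hence $u_1 = f \le u_2$ on $S(x,r_1)$, so $u_1 \le u_2$ on $B(x,r_1)$ by the maximum principle, and the mean value identity $u_i(x) = \int_{T^1_x X} f(\gamma_{x,v}(r_i))\,d\theta_x(v)$ finishes. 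Your calculus approach is more quantitative, delivering the identity $F'(r) = \bigl(\omega_{n-1}A(r)\bigr)^{-1}\int_{B(x,r)}\Delta f\,dvol$, but it genuinely uses the $C^2$ hypothesis to differentiate under the integral and to apply the divergence theorem. The paper's potential-theoretic comparison has the advantage of adapting with only minor changes to merely upper semicontinuous subharmonic functions, which is exactly how the paper extends the lemma beyond the $C^2$ case in Remark 4.7(1); this is why it was chosen there, though for the lemma as literally stated both arguments are complete and valid.
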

\begin{proof}
Let $u_1$ and $u_2$ be the harmonic extensions of $f$ on the balls $B(x,r_1)$ and $B(x,r_2)$ respectively. Then by subharmonicity of $f$, it follows that $f \le u_2$ on $B(x,r_2)$ and hence in particular on the sphere $S(x,r_1)$. Hence by the maximum principle, $u_1(x) \le u_2(x)$. Then using the mean value identity of harmonic functions, the result follows. 
\end{proof}
\begin{lemma} \label{Omega_r}
For $r>0$, we define $\Omega_r := \frac{1}{vol\left(B(o,r)\right)} \chi_{B(o,r)}$. Then  for $f \in C^2(X)$ one has for some constant $C(h,n) >0$,
\begin{equation*}
\Delta f(x) = \displaystyle\lim_{r\to 0} \frac{C(h,n)}{r^2} \left\{\left(f*\Omega_r\right)(x) - f(x) \right\}\:.
\end{equation*}
\end{lemma}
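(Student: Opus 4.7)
The plan is to express the convolution $f * \Omega_r$ as a normalized ball-average and then apply the infinitesimal mean value identity (\ref{infinitesimal_mvp}) to extract $\Delta f(x)$ in the limit. Since $\Omega_r$ is radial around $o$, its $x$-translate in the sense of (\ref{translate}) is $\tau_x\Omega_r = \frac{1}{vol(B(o,r))}\chi_{B(x,r)}$. The harmonicity of $X$ forces $vol(B(y,r))$ to depend only on $r$ (since the density function $A$ does), so $vol(B(o,r)) = vol(B(x,r))$ and
$$(f*\Omega_r)(x) = \frac{1}{vol(B(x,r))}\int_{B(x,r)} f(y)\,dvol(y).$$

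My next step would be to pass to polar coordinates around $x$. Writing $vol(B(x,r)) = vol(T^1_xX)\int_0^r A(t)\,dt$ and using $f(x) = \int_{T^1_xX} f(x)\,d\theta_x(v)$ to subtract, one obtains
$$(f*\Omega_r)(x) - f(x) = \frac{\int_0^r A(t)\,\bigl[\int_{T^1_xX}\{f(\gamma_{x,v}(t))-f(x)\}\,d\theta_x(v)\bigr] dt}{\int_0^r A(t)\,dt}.$$
Substituting the expansion (\ref{infinitesimal_mvp}) for the inner spherical average then yields
$$(f*\Omega_r)(x) - f(x) = \frac{\Delta f(x)}{2n}\cdot\frac{\int_0^r A(t)\, t^2\,dt}{\int_0^r A(t)\,dt} + C(n)\cdot\frac{\int_0^r A(t)\, E(t)\,dt}{\int_0^r A(t)\,dt},$$
where $|E(t)| = O(t^3)$.

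Finally, I would invoke the small-scale behavior of the density. From (\ref{jacobian_estimate}) together with the smoothness of $A$ at the origin, one has $A(t) = t^{n-1}(1+o(1))$ as $t\to 0^+$, so $\int_0^r A(t)\,dt \sim r^n/n$, $\int_0^r A(t)\,t^2\,dt \sim r^{n+2}/(n+2)$, and $\int_0^r A(t)\,|E(t)|\,dt = O(r^{n+3})$. Dividing the displayed identity by $r^2$ and letting $r \to 0^+$, the error term contributes $O(r)\to 0$ while the main ratio converges to $\frac{n}{n+2}$, so the right-hand side tends to $\Delta f(x)/(2(n+2))$. This proves the identity with the explicit value $C(h,n) = 2(n+2)$ (in fact the constant depends only on $n$).

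The step requiring the most care is the refined asymptotic $A(t) = t^{n-1}(1+o(1))$ at $t\to 0^+$, which is standard from the Jacobi field expansion of the exponential map but is only implicit in (\ref{jacobian_estimate}); the rest is routine polar integration.
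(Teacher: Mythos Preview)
Your argument is correct and follows essentially the same route as the paper: integrate the spherical expansion (\ref{infinitesimal_mvp}) against $A(t)\,dt$ in polar coordinates, then use the small-$r$ behavior of $A$ to identify the limit. The paper's proof is simply the terse two-line version of what you wrote out; your only addition is making the constant explicit and correctly flagging that the precise asymptotic $A(t)=t^{n-1}(1+o(1))$ (not merely $A(t)\asymp t^{n-1}$ from (\ref{jacobian_estimate})) is what guarantees the limit exists.
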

\begin{proof}
Integrating the identity (\ref{infinitesimal_mvp}) and then using the estimates of the density function (\ref{jacobian_estimate}) for small $r$ yields the identity,
\begin{equation*}
\Delta f(x) = \displaystyle\lim_{r\to 0} \frac{C(h,n)}{r^2\:vol(B(o,r))} \int_{B(x,r)} \left\{f(y) - f(x) \right\} dvol(y) \:.
\end{equation*}
Now the result follows from the definitions of $\Omega_r$ and the convolution.
\end{proof}
Now we introduce the notion of an approximate identity.
\begin{definition} \label{approx_id}
A sequence of non-negative continuous functions $\{h_j\}_{j=1}^\infty$ is an approximate identity in $L^1(X, dvol)$ if
\begin{itemize}
\item[(i)] $\int_X h_j \:dvol=1$\:, for all $j \in \N$ and

\item[(ii)] ${\displaystyle\lim_{j \to \infty}} \int_{X \setminus B(o,\varepsilon)} h_j\: dvol=0$\:, for all $\varepsilon>0$. 
\end{itemize}
\end{definition}
\begin{remark} \label{ex_of_approx_id}
Let $\{r_j\}_{j=1}^\infty \subset (0,+\infty)$ be a decreasing sequence with $r_j \to 0$ as $j \to +\infty$. For each $j$, let $h_j$ be a non-negative $C^\infty$ radial function on $X$ with support contained in $\{x \in X: r_{j+1} < d(o,x) < r_j\}$ satisfying $\int_{X} h_j \:dvol=1$. Then the sequence $\{h_j\}_{j=1}^\infty$ forms a $C^\infty$-approximate identity. 
\end{remark}
\begin{lemma} \label{approx_id_lemma}
Let $\{h_j\}_{j=1}^\infty$ be a $C^\infty$-approximate identity as defined in remark \ref{ex_of_approx_id}. If $f$ is subharmonic on $X$, then $\{f*h_j\}_{j=1}^\infty$ is a non-increasing sequence of $C^\infty$ subharmonic functions on $X$ satisfying 
\begin{equation} \label{approx_id_eqn}
\left(f*h_j\right)(x) \ge f(x) \text{ and } \displaystyle\lim_{j \to \infty} \left(f*h_j\right)(x)=f(x) \:,
\end{equation}
for all $x \in X$.
\end{lemma}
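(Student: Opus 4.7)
The plan is to reduce all assertions to positivity, monotonicity, and a small-radius limit for the spherical means of $f$. For each $x \in X$, passing to geodesic polar coordinates around $x$ and using that $\tau_x h_j = H_j \circ d_x$ (where $h_j = H_j \circ d_o$ is radial around $o$), I expect to rewrite
\begin{equation*}
(f * h_j)(x) \;=\; c_n \int_{r_{j+1}}^{r_j} H_j(r)\, A(r)\, M_f(x,r)\, dr,
\end{equation*}
where $M_f(x,r) := \int_{T^1_x X} f(\gamma_{x,v}(r))\, d\theta_x(v)$ and $c_n > 0$ is a fixed constant chosen so that the normalization $\int_X h_j\, dvol = 1$ reads $c_n \int H_j(r) A(r)\, dr = 1$. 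I will need three facts about the spherical means: (i) the submean value property $M_f(x,r) \ge f(x)$ for all $r>0$, which is (\ref{submvp}); (ii) monotonicity of $r \mapsto M_f(x,r)$, i.e.\ the extension of Lemma \ref{non_decr_mvp} from $C^2$ to general subharmonic $f$; and (iii) $\lim_{r \to 0^+} M_f(x,r) = f(x)$, which follows from (i) combined with upper semicontinuity of $f$ (giving the matching $\limsup$).

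Granting (i)--(iii), the first inequality of (\ref{approx_id_eqn}) is immediate from (i) since $H_j A \ge 0$, while the limit is obtained from the sandwich $f(x) \le (f*h_j)(x) \le M_f(x, r_j) \to f(x)$. For monotonicity of the sequence, the support of $h_{j+1}$ sits in radii $\le r_{j+1}$ whereas that of $h_j$ sits in radii $\ge r_{j+1}$, so by (ii),
\begin{equation*}
(f * h_{j+1})(x) \;\le\; M_f(x, r_{j+1}) \;\le\; (f * h_j)(x).
\end{equation*}
For $C^\infty$-regularity of $f * h_j$, I will use that on a Hadamard manifold $d(x, \cdot)$ is smooth off $x$, and that $h_j$ vanishes in a neighborhood of $o$, so the integrand $y \mapsto f(y)\,(\tau_x h_j)(y)$ is smooth in $x$ with a common compact support of integration near each point; dominated convergence then permits differentiation under the integral sign.

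For subharmonicity of $f * h_j$: since $f*h_j$ is $C^\infty$, Lemma \ref{Omega_r} reduces the task to verifying $((f*h_j)*\Omega_r)(x) \ge (f*h_j)(x)$ for all small $r > 0$. Because $\Omega_r$ and $h_j$ are both radial, Lemma \ref{props_of_conv} yields
\begin{equation*}
(f*h_j)*\Omega_r \;=\; (f*\Omega_r)*h_j.
\end{equation*}
But $(f*\Omega_r)(y)$ is the average of $f$ over $B(y,r)$, which dominates $f(y)$ by the submean value property; convolving this pointwise inequality against the non-negative $h_j$ preserves it, giving the required estimate. The main obstacle I anticipate is establishing (ii) for merely upper semicontinuous subharmonic $f$ in our setting; my plan is to solve the Dirichlet problem on the balls $B(x,r_1)$ and $B(x,r_2)$ via Perron's method for upper semicontinuous boundary data (the Gromov-hyperbolic/purely-exponential-growth setting is sufficiently regular for this) and then repeat verbatim the harmonic-majorant/maximum-principle argument of Lemma \ref{non_decr_mvp}. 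A minor secondary point is the Fubini justification in the associativity identity above, which I will handle by noting that $\Omega_r$ and $h_j$ have compact support and $f$ is locally integrable.
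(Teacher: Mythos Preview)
Your proposal is correct and mirrors the paper's proof in nearly every step: the polar-coordinates rewrite, the use of the submean value inequality for $(f*h_j)(x)\ge f(x)$, the upper-semicontinuity argument for the limit, and the associativity/$\Omega_r$ trick for subharmonicity are all exactly what the paper does.

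The one substantive difference is how fact (ii) --- monotonicity of $r\mapsto M_f(x,r)$ for general (non-$C^2$) subharmonic $f$ --- is obtained. You plan to prove it \emph{first}, by solving the Dirichlet problem on balls via Perron's method for upper semicontinuous boundary data, and then feed it into the remaining arguments. The paper instead \emph{bootstraps}: it proves $C^\infty$-regularity, the inequality, the limit, and the subharmonicity of $f*h_j$ without ever using (ii), observing that only the spherical submean value property of $f$ is needed for those steps. Once $f*h_j$ is known to be $C^\infty$ and subharmonic, Lemma~\ref{non_decr_mvp} applies to it, and combining $M_f(x,t_1)\le M_{f*h_j}(x,t_1)\le M_{f*h_j}(x,t_2)$ with reverse Fatou (using boundedness above on compacts) yields (ii) for $f$ itself. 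Only then is the sequence shown to be non-increasing. This ordering avoids any appeal to Perron's method and keeps the argument entirely internal to the mollification machinery; your route is perfectly valid but imports a heavier tool, as you yourself flag. A minor consequence: the paper's proof of $\lim_j (f*h_j)(x)=f(x)$ does not pass through (ii) at all --- it bounds $(f*h_j)(x)$ directly by any $\alpha>f(x)$ once $r_j$ is small enough --- whereas your sandwich $f(x)\le (f*h_j)(x)\le M_f(x,r_j)$ already relies on (ii).
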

\begin{proof}
The statement $f*h_j \in C^\infty(X)$ is a simple consequence of the facts that $h_j \in C_c^\infty(X)$, for all $j \in \N$ and $f$ is locally integrable.

\medskip

Let $h_j=u_j\circ d_o$, where $u_j$ is the corresponding function on $\R$. The inequality in (\ref{approx_id_eqn}) follows by integration in polar coordinates,
\begin{eqnarray} \label{approx_id_eq1}
\left(f*h_j\right)(x) &=& \int_{0}^\infty u_j(r)\:A(r) \left(\int_{T^1_x X} f\left(\gamma_{x,v}(r)\right) d\theta_x(v)\right) dr \nonumber\\
& \ge & f(x) \int_{0}^\infty u_j(r)\: A(r)\: dr \nonumber\\
& = & f(x) \:.
\end{eqnarray}

Next we fix $x \in X$ and let $\alpha > f(x)$. By upper semi-continuity of $f$ there exists $r>0$ such that 
\begin{equation*}
f(y) < \alpha \:,\text{ for all } y \in B(x,r) \:.
\end{equation*}
We note that for $r_j < r$,
\begin{equation*}
Supp\left(\tau_x h_j\right) \subset B(x,r) \:.
\end{equation*}
Then 
\begin{equation*}
\left(f*h_j\right)(x) = \int_{B(x,r)} f(y) \left(\tau_x h_j\right)(y)\: dvol(y) \le \alpha \int_X h_j\: dvol = \alpha \:.
\end{equation*}
Hence, 
\begin{equation*}
\displaystyle\limsup_{j \to +\infty} \left(f*h_j\right)(x) \le f(x) \:,
\end{equation*}
which combined with the inequality (\ref{approx_id_eq1}) yields (\ref{approx_id_eqn}).

\medskip

To show that $f*h_j$ is subharmonic for all $j \in \N$, we consider the convolution $(f*h_j)*\Omega_r$, where $\Omega_r$ is as defined in Lemma \ref{Omega_r}. By repeated applications of Lemma \ref{props_of_conv} and by computations similar to that in (\ref{approx_id_eq1}), we get
\begin{equation*}
(f*h_j)*\Omega_r = \left(f*\Omega_r\right)*h_j \ge f * h_j \:.
\end{equation*}
Then in view of Lemma \ref{Omega_r}, it follows that $f*h_j$ is subharmonic for all $j \in \N$. 

\medskip

Finally we show that the sequence is non-increasing. But first we will need an analogue of Lemma \ref{non_decr_mvp} for $f$. Consider $t_2 \ge t_1 >0$.  As $f*h_j$ are $C^\infty$-subharmonic functions with $f*h_j(x) \ge f(x)$ for all $x \in X$, we have by applying Lemma  \ref{non_decr_mvp} to $f*h_j$,
\begin{eqnarray*}
\int_{T^1_x X} f\left(\gamma_{x,v}(t_1)\right)d\theta_x(v) & \le & \int_{T^1_x X} (f*h_j)\left(\gamma_{x,v}(t_1)\right)d\theta_x(v)  \\
& \le & \int_{T^1_x X} (f*h_j)\left(\gamma_{x,v}(t_2)\right)d\theta_x(v) \:.
\end{eqnarray*}
Next using the fact that subharmonic functions are bounded above on compact sets we see that the reverse Fatou lemma is applicable on $\{f*h_j\}_{j=1}^\infty$, which yields
\begin{equation*}
\displaystyle\limsup_{j \to +\infty} \int_{T^1_x X} (f*h_j)\left(\gamma_{x,v}(t_2)\right)d\theta_x(v) \le \int_{T^1_x X} f\left(\gamma_{x,v}(t_2)\right)d\theta_x(v) \:.
\end{equation*}
Combining the last two inequalities one gets the desired analogue of Lemma \ref{non_decr_mvp} for $f$.

\medskip

Now let $m>l$. Since $Supp(h_l)$ is contained in $\{r_{l+1} < d(o,x)< r_l\}$ and $r_{l+1} \ge r_m$, we have
\begin{eqnarray*}
\left(f*h_l\right)(x) &=& \int_{r_{l+1}}^{r_l} u_l(r) A(r) \left(\int_{T^1_x X} f\left(\gamma_{x,v}(r)\right)d\theta_x(v) \right) dr \\
& \ge & \int_{r_{l+1}}^{r_l} u_l(r) A(r) \left(\int_{T^1_x X} f\left(\gamma_{x,v}(r_m)\right)d\theta_x(v) \right) dr \\
&=& \int_{T^1_x X} f\left(\gamma_{x,v}(r_m)\right)d\theta_x(v) \int_X h_l\: dvol \\
&=& \int_{T^1_x X} f\left(\gamma_{x,v}(r_m)\right)d\theta_x(v) \int_X h_m\: dvol \\
& \ge & \int_{r_{m+1}}^{r_m} u_m(r) A(r) \left(\int_{T^1_x X} f\left(\gamma_{x,v}(r)\right)d\theta_x(v) \right) dr \\
&=& \left(f*h_m\right)(x) \:.
\end{eqnarray*}
\end{proof}
\begin{remark} \label{non_decr_mvp_rmk}
\begin{enumerate}
\item The proof of Lemma \ref{approx_id_lemma} shows that Lemma \ref{non_decr_mvp} is true for general subharmonic functions. 

\medskip

\item For a subharmonic function $f$, its restrictions to spheres are integrable, that is, 
\begin{equation} \label{subh_L1}
\int_{T^1_x X} |f(\gamma_{x,v}(r))|\: d\theta_x(v) < +\infty \:, \text{ for all } x \in X \text{ and all } r>0 \:, 
\end{equation}
This is seen as follows. Choose and fix $x \in X$. Then combining the fact that $f$ is locally integrable with the polar decomposition of the volume measure, we get that the function 
\begin{eqnarray*}
(0,+\infty) \to (0,+\infty] \:,\text{ defined by } \\
r \mapsto \int_{T^1_x X} |f(\gamma_{x,v}(r))| \: d\theta_x(v) 
\end{eqnarray*}
is locally integrable with respect to the measure $A(r)dr$. Then as $A(r)dr$ is a regular Borel measure on $(0,+\infty)$, it follows that
\begin{equation} \label{subh_L1_eq1}
\int_{T^1_x X} |f(\gamma_{x,v}(r))| \: d\theta_x(v) < +\infty \:,
\end{equation} 
for almost every $r \in (0,+\infty)$ \:,
with respect to the measure $A(r)dr$. Then as the above measure takes positive values on every non-empty open set in $(0,+\infty)$, it follows that (\ref{subh_L1_eq1}) is true for $r$ belonging to a dense subset of $(0,+\infty)$. Now as $f$ is bounded above on compacts, we get that 
\begin{equation} \label{subh_L1_eq2}
\int_{T^1_x X} f(\gamma_{x,v}(r)) \: d\theta_x(v) > -\infty \:,
\end{equation}
is true for $r$ belonging to a dense subset of $(0,+\infty)$. Then part $(1)$ of this remark yields that (\ref{subh_L1_eq2}) is true for all $r \in (0,+\infty)$. Combining this with the fact that $f$ is bounded above on compacts, (\ref{subh_L1}) follows.
\end{enumerate}

\end{remark}

Now we are in a position to prove Proposition \ref{riesz_meas_prop}.
\begin{proof}[Proof of Proposition \ref{riesz_meas_prop}]
We first show that 
\begin{equation} \label{riesz_meas_pf_eq1}
\int_X f \Delta \psi \:dvol \ge 0\:, \text{ for all } \psi \in C^2_c(X) \text{ with } \psi \ge 0 \:.
\end{equation}
Let $\{h_j\}_{j=1}^\infty$ be a $C^\infty$ approximate identity as defined in remark \ref{ex_of_approx_id}. Set $f_j := f * h_j$\:. Then by Lemma \ref{approx_id_lemma}, $\{f_j\}_{j=1}^\infty$ is a non-increasing sequence of $C^\infty$-subharmonic functions on $X$ that converges to $f$ everywhere on $X$. Now combining the facts that 
\begin{itemize}
\item $\psi \in C^2_c(X)$\:,
\item 
$\left|f_j(x)\right| \le |f(x)|+|f_1(x)| \:,\text{ for all } j \in \N$\:,for all $x \in X$\:,
\item both $f,f_1$ being subharmonic are locally integrable,
\end{itemize}
we note that the Dominated Convergence Theorem is applicable for the sequence of functions $\{f_j \Delta \psi\}_{j=1}^\infty$. Therefore by the Green's identity and the Dominated Convergence Theorem,
\begin{equation*}
\int_X f \Delta \psi \: dvol = \displaystyle\lim_{j \to \infty} \int_X f_j \Delta \psi \:dvol = \displaystyle\lim_{j \to \infty} \int_X \psi \Delta f_j \:dvol  \ge 0 \:.
\end{equation*}

\medskip

Hence, 
\begin{equation*}
L(\psi) := \int_X f \Delta \psi \: dvol
\end{equation*}
defines a positive linear functional on $C^\infty_c(X)$. Then by usual density arguments (following the proof of Theorem $4.6.3$ of \cite{St} for the real hyperbolic ball verbatim) $L$ extends to $C_c(X)$ as a positive linear functional. Now the result follows by the Riesz Representation theorem for positive linear functionals on $C_c(X)$.
\end{proof}

\subsection{Harmonic majorants}
\begin{definition} \label{harmonic_majorant}
\begin{enumerate}
\item A subharmonic function $f$ on $X$ is said to have a harmonic majorant if there exists a harmonic function $h$ on $X$ such that 
\begin{equation*}
f(x) \le h(x)\:,\text{ for all } x \in X\:.
\end{equation*}
\item A harmonic function $h$ on $X$ is said to be the least harmonic majorant of a subharmonic function $f$ on $X$ if
\begin{itemize}
\item[(i)] $h$ is a harmonic majorant of $f$ and
\item[(ii)] $h(x) \le H(x)$ for all $x \in X$, whenever $H$ is a harmonic majorant of $f$\:.
\end{itemize} 
\end{enumerate}
\end{definition}
In this subsection we will prove the following equivalence for existence of a least harmonic majorant in terms of boundedness of integrals over spheres.
\begin{proposition} \label{harmonic_majorant_prop}
Let $f$ be subharmonic on $X$. Then the following are equivalent:
\begin{itemize}
\item[(i)] $f$ has a least harmonic majorant on $X$.
\item[(ii)] $f$ has a harmonic majorant on $X$.
\item[(iii)] for all $x \in X$,\begin{equation*}
\displaystyle\lim_{r \to +\infty} \int_{T^1_x X} f\left(\gamma_{x,v}(r)\right) \:d\theta_x(v) < +\infty \:.
\end{equation*}
\end{itemize}
\end{proposition}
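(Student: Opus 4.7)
The plan is to prove the circular chain $(i) \Rightarrow (ii) \Rightarrow (iii) \Rightarrow (i)$. The first implication is immediate from Definition \ref{harmonic_majorant}. For $(ii) \Rightarrow (iii)$, if $H$ is a harmonic majorant of $f$, then for any $x \in X$ and $r > 0$, the mean value identity for the harmonic $H$ combined with $f \le H$ gives
\begin{equation*}
\int_{T^1_x X} f(\gamma_{x,v}(r)) \, d\theta_x(v) \le \int_{T^1_x X} H(\gamma_{x,v}(r)) \, d\theta_x(v) = H(x).
\end{equation*}
Together with the non-decreasing nature of $r \mapsto \int_{T^1_x X} f(\gamma_{x,v}(r))\, d\theta_x(v)$ (Lemma \ref{non_decr_mvp} extended to general subharmonic $f$ by part (1) of Remark \ref{non_decr_mvp_rmk}, and well-defined by part (2)), the limit in $(iii)$ exists and is at most $H(x) < +\infty$.

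The substantive implication is $(iii) \Rightarrow (i)$, which I would carry out by a Perron-type exhaustion. For each $n \in \N$, I would construct a function $u_n$ on $X$ which equals $f$ on $X \setminus B(o,n)$ and is harmonic on $B(o,n)$ with boundary values $f|_{\partial B(o,n)}$. Since $f$ is only upper semi-continuous, this step goes through the Perron method; equivalently (and more in line with the machinery already built in Section 4), I would first regularize using the smooth subharmonic sequence $f_j = f * h_j$ from Lemma \ref{approx_id_lemma}, solve the Dirichlet problem on $B(o,n)$ with boundary values $f_j|_{\partial B(o,n)}$ to get $u_{n,j}$, and then take $j \to \infty$ using monotone convergence $f_j \searrow f$. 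The resulting $u_n$ satisfies: (a) $u_n \ge f$ on $X$ by the maximum principle; (b) $u_n \le u_{n+1}$ on $X$, because on $\partial B(o,n)$ one has $u_n = f \le u_{n+1}$ and both are harmonic (resp.\ $u_{n+1}$ is harmonic on a larger ball) on $B(o,n)$; and (c) by the mean value property of the harmonic manifold,
\begin{equation*}
u_n(o) = \int_{T^1_o X} f(\gamma_{o,v}(n))\, d\theta_o(v).
\end{equation*}

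With these $u_n$ in hand, hypothesis $(iii)$ applied at $x=o$ gives $\sup_n u_n(o) < +\infty$. Fix any $R > 0$; for all $n > R$, the functions $u_n$ form a non-decreasing sequence of harmonic functions on the connected open set $B(o,R)$, and their values at $o$ are bounded. Lemma \ref{Harnack_Thm} then excludes the blow-up alternative and yields uniform convergence on compact subsets of $B(o,R)$ to a harmonic limit. Letting $R \to +\infty$ produces a harmonic function $F_f$ on $X$ with $F_f \ge f$ pointwise. To see $F_f$ is the \emph{least} harmonic majorant, let $H$ be any harmonic majorant; then $H \ge f$ on $\partial B(o,n)$, so by the maximum principle $H \ge u_n$ on $B(o,n)$, and passing to the limit gives $H \ge F_f$.

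I expect the main obstacle to be the construction of the harmonic modification $u_n$ on $B(o,n)$ with the upper semi-continuous boundary datum $f|_{\partial B(o,n)}$, since Hadamard balls in a general Harmonic manifold of purely exponential volume growth are not a priori endowed with an explicit Poisson formula the way Euclidean or model hyperbolic balls are. I would resolve this either by invoking Perron's method in the Riemannian setting (each geodesic ball is regular for the Dirichlet problem because a Hadamard manifold admits strict barriers at every boundary point of a smooth bounded domain), or, staying inside the paper's toolkit, by using the smooth approximation $f_j = f * h_j \searrow f$ from Lemma \ref{approx_id_lemma} together with the monotone convergence of the corresponding Dirichlet solutions and the dominated convergence argument already deployed in the proof of Proposition \ref{riesz_meas_prop}.
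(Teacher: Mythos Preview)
Your proposal is correct and follows essentially the same route as the paper: both argue the cycle $(i)\Rightarrow(ii)\Rightarrow(iii)\Rightarrow(i)$, with the substantive step $(iii)\Rightarrow(i)$ carried out by solving the Dirichlet problem for $f$ on an exhausting family of geodesic balls, observing that the resulting harmonic functions increase with the radius, and invoking Lemma~\ref{Harnack_Thm} to pass to a harmonic limit that is then shown to be the least majorant via the maximum principle. The only cosmetic difference is the approximation device used to handle the upper semi-continuous boundary data: the paper packages this into a separate Lemma~\ref{harmonic_majorant_lemma}, where $f|_{S(x_0,r)}$ is approximated from above by a decreasing sequence of continuous functions (via upper semi-continuity) whose harmonic extensions are shown to converge using Harnack--Yau, whereas you propose using the global smooth subharmonic approximants $f_j=f*h_j\searrow f$ from Lemma~\ref{approx_id_lemma}; both yield the same harmonic modification with center value equal to the spherical mean of $f$.
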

For the proof of Proposition \ref{harmonic_majorant_prop}, we will need the following:
\begin{lemma} \label{harmonic_majorant_lemma}
Let $f$ be a subharmonic function on $X$. Choose and fix $x_0 \in X$. Then for each $r>0$, there exists a harmonic function $f^{(r)}$ on $B(x_0,r)$ such that
\begin{itemize}
\item[(i)] $f(x) \le f^{(r)}(x)$ for all $x \in B(x_0,r)$ and
\item[(ii)] \begin{equation*}
\int_{T^1_{x_0}X} f\left(\gamma_{x_0,v}(r)\right) \:d\theta_{x_0}(v) = f^{(r)}(x_0) \:.
\end{equation*} 
\end{itemize}
Furthermore, if $F$ is harmonic on an open subset $\Omega$ of $X$ with $\overline{B(x_0,r)} \subset \Omega$ and $F(x) \ge f(x)$ for all $x \in \Omega$, then
\begin{itemize}
\item[(iii)] $f^{(r)}(x) \le F(x)$ for all $x \in B(x_0,r)$\:.
\item[(iv)] If $0<r_1<r_2$, then
\begin{equation*}
f^{(r_1)}(x) \le f^{(r_2)}(x)\:,\text{ for all } x \in B(x_0,r_1) \:.
\end{equation*}
\end{itemize}
\end{lemma}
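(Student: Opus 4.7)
The plan is to construct $f^{(r)}$ as the decreasing limit of classical Dirichlet solutions for continuous majorants of $f$ on the sphere $S(x_0,r)$. Since $f$ is upper semi-continuous on the compact sphere, it is bounded above, and by Remark \ref{non_decr_mvp_rmk}(2) it is also integrable there with respect to the normalized spherical measure. I pick a decreasing sequence $\{g_n\}$ of continuous functions on $S(x_0,r)$ with $g_n \downarrow f|_{S(x_0,r)}$ pointwise, and for each $n$ let $u_n$ be the classical harmonic extension of $g_n$ to $B(x_0,r)$; solvability of the Dirichlet problem on a smooth geodesic ball of the Hadamard manifold $X$ with continuous boundary data is standard. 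The maximum principle gives $u_n \ge u_{n+1}$, and applied to the superharmonic function $u_n - f$ (whose boundary $\liminf$ is $\ge g_n - f \ge 0$ on $S(x_0,r)$, since $f$ is upper semi-continuous) yields $f \le u_n$ throughout $B(x_0,r)$. The decreasing sequence $\{u_n\}$ is therefore bounded below at every point where $f$ is finite, a dense set by local integrability, so Lemma \ref{Harnack_Thm} applied to $\{-u_n\}$ on the connected ball $B(x_0,r)$ forces $f^{(r)} := \lim_n u_n$ to be harmonic on $B(x_0,r)$. Property (i) is then immediate; for (ii), the mean value property on harmonic manifolds gives $u_n(x_0) = \int_{T^1_{x_0}X} g_n(\gamma_{x_0,v}(r))\, d\theta_{x_0}(v)$, and the monotone convergence theorem for decreasing sequences (applicable since $g_1$ is bounded and $f$ is integrable on the sphere) yields the conclusion upon letting $n\to\infty$.

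Before handling (iii) and (iv), I need to verify that $f^{(r)}$ is independent of the approximating sequence. Given a second continuous decreasing sequence $\{h_k\} \downarrow f$, fix $k$ and $\epsilon > 0$. For each $z \in S(x_0,r)$ pick $n(z)$ with $g_{n(z)}(z) < h_k(z) + \epsilon/2$; joint continuity of $g_{n(z)}$ and $h_k$ extends this inequality to a neighborhood of $z$ in the compact sphere, and a finite subcover combined with the monotonicity of $\{g_n\}$ produces $g_n \le h_k + \epsilon$ uniformly on $S(x_0,r)$ for all sufficiently large $n$. The maximum principle then yields $u_n \le v_k + \epsilon$ on $B(x_0,r)$, where $v_k$ is the harmonic extension of $h_k$. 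Letting $n \to \infty$, then $\epsilon \to 0$, then $k \to \infty$, and invoking the symmetric inequality gives equality of the two limits, so $f^{(r)}$ is well-defined.

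With independence in hand, (iii) follows by exploiting the freedom of choice: replace $\{g_n\}$ by $\{\min(g_n, F|_{S(x_0,r)})\}$, which remains continuous and still decreases to $\min(f,F) = f$ because $F \ge f$ on $S(x_0,r)$; the new boundary data is bounded above by $F|_{S(x_0,r)}$, so the maximum principle forces the corresponding $u_n \le F$ on $B(x_0,r)$, whence $f^{(r)} \le F$. Property (iv) is then (iii) applied with $F := f^{(r_2)}$, which is harmonic on the open neighborhood $B(x_0,r_2) \supset \overline{B(x_0,r_1)}$ and dominates $f$ there by (i). The most delicate step is the well-definedness argument, which is essentially a Dini-type compactness argument adapted to upper semi-continuous limits; once it is in place, everything else reduces to routine applications of the maximum principle and the spherical mean value property for harmonic functions on $X$.
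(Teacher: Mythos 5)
Your construction of $f^{(r)}$ and your proofs of (i), (ii) and (iv) match the paper's argument. Where you diverge is in how you establish (iii): you first prove, via a Dini-type compactness argument on the sphere, that the limit $f^{(r)}$ is independent of the chosen decreasing continuous approximating sequence, and then exploit that freedom by switching to the sequence $\{\min(g_n,F)\}$. The paper instead introduces the $\min$-sequence and its harmonic extensions $H_n$ alongside the original $F_n$, shows $H_n \le F_n$ by the maximum principle, proves via (ii) that both limits agree at the center $x_0$, and then invokes the strong maximum principle (a non-negative harmonic function vanishing at an interior point of a connected open set vanishes identically) to conclude $\lim H_n = f^{(r)}$. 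Your route is slightly more modular — the uniqueness statement is cleaner and reusable, and it does not piggyback on (ii) — while the paper's route is shorter since it only needs agreement of the two specific limits at a single point. Both are correct; the Dini argument you give is standard and carries through without issue. One small remark: to apply Lemma \ref{Harnack_Thm}, you need $f$ finite at at least one point of the ball to rule out the $\to +\infty$ alternative; local integrability (hence finiteness a.e.) gives much more than you need, but the reasoning is fine.
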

\begin{proof}
Fix $r>0$. Then by upper semi-continuity of $f$, there exists a decreasing sequence $\{f_n\}_{n=1}^\infty$ of continuous functions on $S(x_0,r)= \partial B(x_0,r)$ such that
\begin{equation*}
\displaystyle\lim_{n \to \infty} f_n\left(\gamma_{x_0,v}(r)\right)=f\left(\gamma_{x_0,v}(r)\right)\:,\text{ for all } v \in T^1_{x_0} X\:.
\end{equation*}

\medskip

Now we consider the harmonic extensions of $f_n$ to $B(x_0,r)$, say $F_n$. Then by Harnack-Yau and the mean value property, it follows that the sequence $\{F_n\}_{n=1}^\infty$ is uniformly Cauchy on all compact subsets of $B(x_0,r)$. So the sequence converges (uniformly on compacts) to a harmonic function on $B(x_0,r)$, say $f^{(r)}$. We note that by the maximum principle, $f(x) \le f^{(r)}(x)$ for all $x \in B(x_0,r)$. Moreover it follows from part (2) of remark \ref{non_decr_mvp_rmk} and that $\{f_n\}_{n=1}^\infty$ is a sequence of continuous functions decreasing to $f$, that the Dominated Convergence Theorem is applicable to $\{f_n\}_{n=1}^\infty$. Then by the mean value property and the Dominated Convergence Theorem, it follows that
\begin{eqnarray} \label{harmonic_majorant_lem_eq1}
f^{(r)}(x_0) = \displaystyle\lim_{n \to \infty} F_n(x_0) &=& \displaystyle\lim_{n \to \infty}\int_{T^1_{x_0}X} f_n\left(\gamma_{x_0,v}(r)\right) d\theta_{x_0}(v) \nonumber \\
&=& \int_{T^1_{x_0}X} f\left(\gamma_{x_0,v}(r)\right) d\theta_{x_0}(v) \:.
\end{eqnarray}

\medskip
 
Now for $F$ as in the second part of the statement, we consider
\begin{equation*}
h_n\left(\gamma_{x_0,v}(r)\right) := \min \{f_n\left(\gamma_{x_0,v}(r)\right),F\left(\gamma_{x_0,v}(r)\right)\} \:,\text{ for all } v \in T^1_{x_0} X\:.
\end{equation*}
and $H_n$ to be their corresponding harmonic extensions to $B(x_0,r)$. By the maximum principle, $H_n \le F_n$ on $B(x_0,r)$. But $\{h_n\}_{n=1}^\infty$ is a non-increasing sequence of continuous functions on $S(x_0,r)$ with
\begin{equation*}
\displaystyle\lim_{n \to \infty} h_n\left(\gamma_{x_0,v}(r)\right) = f\left(\gamma_{x_0,v}(r)\right) \:,\text{ for all } v \in T^1_{x_0}X \:.
\end{equation*}
Just as above, an application of Harnack-Yau and the mean value property would yield that the sequence $\{H_n\}_{n=1}^\infty$ converges (uniformly on compacts) to a harmonic function on $B(x_0,r)$. Moreover, by the mean value property, the Dominated Convergence Theorem and (\ref{harmonic_majorant_lem_eq1}),
\begin{equation*}
\displaystyle\lim_{n \to \infty} H_n(x_0) = \int_{T^1_{x_0} X} f\left(\gamma_{x_0,v}(r)\right) d\theta_{x_0}(v) =  \displaystyle\lim_{n \to \infty} F_n(x_0) \:.
\end{equation*}
Hence by the maximum principle,
\begin{equation*}
f^{(r)}(x) = \displaystyle\lim_{n \to \infty} F_n(x) = \displaystyle\lim_{n \to \infty} H_n(x) \:, \text{ for all } x \in B(x_0,r) \:.
\end{equation*}
But again by the maximum principle, 
\begin{equation*}
H_n(x) \le F(x) \:,\text{ for all } n \in \N, \: x \in B(x_0,r) \:,
\end{equation*}
and thus it follows that 
\begin{equation*}
f^{(r)}(x) \le F(x)\:, \text{ for all } x \in B(x_0,r)\:.
\end{equation*}
This proves part $(iii)$ of the Lemma.  Part $(iv)$ follows immediately from $(iii)$. 
\end{proof}
Now we are in a position to prove Proposition \ref{harmonic_majorant_prop}.
\begin{proof}[Proof of Proposition \ref{harmonic_majorant_prop}]
Clearly $(i)$ implies $(ii)$ and $(ii)$ implies $(iii)$. We now suppose that $(iii)$ holds. Choose and fix $x_0 \in X$. Let $\{r_n\}_{n=1}^\infty$ be an increasing sequence of positive real numbers such that $r_n \to +\infty$ as $n \to +\infty$. For each $n \in \N$, let $f^{(n)}$ be the harmonic function on $B(x_0,r_n)$ satisfying the conclusion of Lemma \ref{harmonic_majorant_lemma}. By part $(iv)$ of Lemma \ref{harmonic_majorant_lemma},
\begin{equation*}
f^{(n)}(x) \le f^{(n+1)}(x) \:,\text{ for all } x \in B(x_0,r_n) \:.
\end{equation*} 
Moreover by part $(ii)$ of Lemma \ref{harmonic_majorant_lemma}, 
\begin{equation*}
\displaystyle\lim_{n \to \infty} f^{(n)}(x_0) = \displaystyle\lim_{n \to \infty} \int_{T^1_{x_0}X} f\left(\gamma_{x_0,v}(r_n)\right)\: d\theta_{x_0}(v) < + \infty\:,
\end{equation*}

Therefore $\{f^{(n)}\}_{n=1}^\infty$ is a sequence of non-decreasing harmonic functions with a finite limit at $x_0$. Then by Lemma \ref{Harnack_Thm} it follows that $\{f^{(n)}\}_{n=1}^\infty$ converges uniformly (on compacts) to a harmonic function, say $F_f$. Then by part $(i)$ of Lemma \ref{harmonic_majorant_lemma}, $F_f$ is a harmonic majorant of $f$. In fact, $F_f$ is the least harmonic majorant of $f$, which is seen as follows. Let $F$ be any harmonic majorant of $f$. Then by part $(iii)$ of Lemma \ref{harmonic_majorant_lemma},
\begin{equation*}
f^{(n)}(x) \le F(x)\:,\text{ for all } x \in B(x_0,r_n) \:,\text{ for all } n \in \N\:.
\end{equation*}
Hence, 
\begin{equation*}
F_f(x) \le F(x) \:,\text{ for all } x \in X\:.
\end{equation*}
\end{proof}
Henceforth the least harmonic majorant of a subharmonic function $f$ (when it exists) will be denoted by $F_f$. An immediate consequence of Proposition \ref{harmonic_majorant_prop} is the following:
\begin{corollary} \label{harmonic_majorant_cor}
Let $f \le 0$ be subharmonic on $X$. Then $F_f \equiv 0$ if and only if 
\begin{equation*}
\displaystyle\lim_{r \to +\infty} \int_{T^1_{x}X} f\left(\gamma_{x,v}(r)\right)\: d\theta_x(v) = 0 \:,
\end{equation*}
for all $x \in X$.
\end{corollary}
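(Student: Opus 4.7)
The plan is to reduce both directions to a single pointwise formula expressing $F_f(x)$ as the radial average limit. Since $f \le 0$, the constant zero function is a harmonic majorant of $f$, so Proposition \ref{harmonic_majorant_prop} guarantees that the least harmonic majorant $F_f$ exists and $F_f \le 0$ on $X$.

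The key step is to establish, for every $x \in X$, the identity
\begin{equation*}
F_f(x) = \lim_{r \to +\infty} \int_{T^1_x X} f\left(\gamma_{x,v}(r)\right)\, d\theta_x(v).
\end{equation*}
To get this, I would fix $x \in X$ and carry out the construction in the proof of Proposition \ref{harmonic_majorant_prop} using $x_0 := x$ as the basepoint. That construction produces a sequence $\{f^{(r_n)}\}$ (with $r_n \to +\infty$) of harmonic extensions on $B(x,r_n)$ which is non-decreasing (by part (iv) of Lemma \ref{harmonic_majorant_lemma}) and converges uniformly on compact subsets to $F_f$. Evaluating at $x_0 = x$ and using part (ii) of Lemma \ref{harmonic_majorant_lemma} gives $f^{(r_n)}(x) = \int_{T^1_x X} f(\gamma_{x,v}(r_n))\, d\theta_x(v)$, so passing to the limit in $n$ yields $F_f(x) = \lim_{n \to \infty}\int_{T^1_x X} f(\gamma_{x,v}(r_n))\, d\theta_x(v)$. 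The passage from the limit along the sequence $r_n$ to the full radial limit $r \to +\infty$ is justified by part (1) of Remark \ref{non_decr_mvp_rmk}, which ensures the spherical averages are non-decreasing in $r$.

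From this identity both directions follow immediately. If $F_f \equiv 0$, then the radial limit is $0$ at every $x$. Conversely, if the radial limit is $0$ at every $x$, the identity forces $F_f(x) = 0$ for every $x$, so $F_f \equiv 0$. (In fact, since $F_f \le 0$ is harmonic on the connected manifold $X$, the vanishing at a single point would suffice via the maximum principle, but the identity gives the cleaner argument.)

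I do not expect any serious obstacle here: the real work was carried out in Proposition \ref{harmonic_majorant_prop}, Lemma \ref{harmonic_majorant_lemma}, and the monotonicity principle in Remark \ref{non_decr_mvp_rmk}. The only mild subtlety to verify carefully is that the basepoint $x_0$ used to construct $F_f$ in the proof of Proposition \ref{harmonic_majorant_prop} may be varied, since $F_f$ is intrinsically defined as the least harmonic majorant independently of the construction procedure — this allows the pointwise identity above to be read off at every $x$.
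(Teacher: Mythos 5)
Your argument is correct and is essentially the paper's own: both reduce the corollary to the pointwise identity $F_f(x) = \lim_{r\to+\infty}\int_{T^1_xX} f(\gamma_{x,v}(r))\,d\theta_x(v)$, which the paper asserts in one line by appealing to the construction in the proof of Proposition~\ref{harmonic_majorant_prop}. You helpfully spell out the only subtlety the paper leaves implicit — that the basepoint $x_0$ used to build $F_f$ can be taken to be any $x$, since the least harmonic majorant is intrinsically defined — and you correctly invoke the monotonicity from Remark~\ref{non_decr_mvp_rmk} to pass from the sequence $r_n$ to the full radial limit.
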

\begin{proof}
By construction of $F_f$ in the proof of Proposition \ref{harmonic_majorant_prop}, we have for all $x \in X$,
\begin{equation*}
F_f(x) = \displaystyle\lim_{r \to +\infty} \int_{T^1_{x}X} f\left(\gamma_{x,v}(r)\right)\: d\theta_{x}(v) \:.
\end{equation*}
Hence the result follows.
\end{proof}
\subsection{Riesz decomposition}
Having established the notions of the Riesz measure and the least harmonic majorant of a subharmonic function, we now aim to prove the Riesz decomposition theorem.

First we see a couple of lemmas.
\begin{lemma} \label{riesz_decom_lemma1}
Let $h$ be a radial $C^\infty_c$ function on $X$ with $\int_X h \:dvol=0$. Let $v:= - G * h$\:, then $v$ is a radial $C^\infty_c$ function on $X$ with $\Delta v=h$\:.
\end{lemma}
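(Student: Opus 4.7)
My plan is to verify the three claimed properties of $v$ in turn: radiality, smoothness together with the identity $\Delta v = h$, and compact support. Radiality is immediate from Lemma \ref{props_of_conv}(3), since both $G$ and $h$ are radial functions.

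For smoothness and the Laplacian identity, I would first use commutativity (Lemma \ref{props_of_conv}(1)) to rewrite the convolution in the standard potential form
\begin{equation*}
v(x) = -\int_X h(y)\, G(x,y)\, dvol(y),
\end{equation*}
which converges absolutely for every $x \in X$: the function $h$ is bounded with compact support, while $G(x,\cdot)$ is locally integrable thanks to the estimate $G(r) \asymp r^{-(n-2)}$ near zero from (\ref{green_estimate}) together with exponential decay at infinity. Applying Fubini's theorem and the distributional identity $\Delta G_x = -\delta_x$ against an arbitrary $\psi \in C^\infty_c(X)$ will yield $\Delta v = h$ in the sense of distributions; since $h \in C^\infty$, elliptic regularity then upgrades $v$ to a $C^\infty$ function satisfying $\Delta v = h$ classically.

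The main obstacle, and the essential content of the lemma, is verifying that $v$ has compact support; here is where both hypotheses (radiality of $h$ and the vanishing integral) must be used simultaneously. Let $R > 0$ be such that $h$ vanishes outside $B(o,R)$, write $h = \phi \circ d_o$, and fix any $x$ with $d(o,x) > R$. Integrating in polar coordinates around $o$, the potential becomes
\begin{equation*}
v(x) = -c_n \int_0^R \phi(r) A(r) \left(\int_{T^1_o X} G(x, \gamma_{o,v}(r))\, d\theta_o(v)\right) dr,
\end{equation*}
where $c_n$ is a dimensional constant arising from the normalization of $\theta_o$. Since $d(o,x) > r$ for every $r \in [0,R]$, the function $G(x,\cdot)$ is harmonic on $\overline{B(o,r)}$, and the mean value property on harmonic manifolds makes the inner integral equal to the constant $G(x,o)$. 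Pulling this factor out of the radial integral, what remains is $\int_0^R \phi(r) A(r)\, dr$, which is proportional to $\int_X h\, dvol$ and hence vanishes by hypothesis. Therefore $v(x) = 0$ whenever $d(o,x) > R$, so $v$ is supported in the compact set $\overline{B(o,R)}$, completing the proof that $v \in C^\infty_c(X)$.
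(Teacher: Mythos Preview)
Your proof is correct and follows essentially the same route as the paper's: radiality via Lemma~\ref{props_of_conv}, compact support via the mean-value property applied to $G(x,\cdot)$ on balls centered at $o$ combined with $\int_X h\,dvol = 0$, and $\Delta v = h$ via Fubini and the distributional identity $\Delta G_x = -\delta_x$. The only minor difference is that you invoke elliptic regularity for smoothness, whereas the paper appeals directly to local integrability of $G$.
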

\begin{proof}
Let $h=u\circ d_o$, where $u$ is the corresponding function on $\R$. For fixed $x \in X$, the function $y \mapsto G(x,y)$ is harmonic in $B(o,d(o,x))$ and hence for all $r \in (0,d(o,x))$, by the mean value property,
\begin{equation} \label{riesz_decom_lemma1_eq1}
\int_{T^1_o X} G\left(x,\gamma_{o,v}(r)\right) d\theta_o(v) = G(x,o) = G(d(o,x)) \:.
\end{equation}
We now choose $r>0$ such that $Supp(h) \subset \overline{B(o,r)}$. Then for all $x$ with $d(o,x)>r$, by (\ref{riesz_decom_lemma1_eq1}) and Lemma \ref{props_of_conv} we have
\begin{eqnarray*}
G*h(x) &=& \int_X h(y) G(x,y) \: dvol(y) \\
&=& \int_{0}^{+\infty} u(r) A(r) \left(\int_{T^1_o X}  G\left(x,\gamma_{o,v}(r)\right) d\theta_o(v)\right) \: dr \\
&=& G(d(o,x)) \int_X h\: dvol \\
&=& 0 \:.
\end{eqnarray*}
Thus $Supp(v) \subset \overline{B(o,r)}$\:. Hence $v \in C^\infty_c(X)$ (the regularity follows from the fact that the Green function is locally integrable) and is radial by Lemma \ref{props_of_conv}.

\medskip

Let $\psi \in C^2_c(X)$. Then by Green's identity, symmetry of the Green function and Fubini's theorem, it follows that
\begin{eqnarray*}
\int_X \psi(y) \Delta v(y) \: dvol(y) &=& \int_X v(y) \Delta \psi (y) \: dvol(y) \\
&=&- \int_X \left(\int_X G(x,y) h(x) \: dvol(x)\right) \Delta \psi (y) \: dvol(y)\\
&=& - \int_X h(x) \left(\int_X G(x,y) \Delta \psi (y) \: dvol(y)\right) \: dvol(x) \\
& =& - \int_X h(x) \left(\int_X \Delta G(x,y)  \psi (y) \: dvol(y)\right) \: dvol(x) \\
&=& \int_X h(x) \psi(x) \: dvol(x) \:.
\end{eqnarray*}
Since the above is true for any $\psi \in C^2_c(X)$, we get the result.
\end{proof}
\begin{lemma} \label{riesz_decom_lemma2}
Let $f \le 0$ be a subharmonic function on $X$ satisfying for all $x \in X$,
\begin{equation} \label{riesz_decom_lemma2_eq}
\displaystyle\lim_{r \to +\infty} \int_{T^1_x X} f\left(\gamma_{x,v}(r)\right)\: d\theta_x(v) = 0 \:.
\end{equation}
Then for all $x \in X$,
\begin{equation*}
f(x) = - \int_X G(x,y) \: d\mu_f(y) \:,
\end{equation*}
where $\mu_f$ is the Riesz measure of $f$ \:.
\end{lemma}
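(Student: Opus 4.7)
The plan is to establish a ``ball-Green'' representation of $f$ on geodesic balls of radius $R$ centered at an arbitrary point $x_0$, and then let $R \to \infty$, using the hypothesis to kill the boundary contribution.

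By Corollary \ref{harmonic_majorant_cor}, the hypothesis is equivalent to the least harmonic majorant $F_f \equiv 0$. Fix $x_0 \in X$ and $R > 0$. The function $y \mapsto \wtilde{G}_R(y) := G(d(x_0,y)) - G(R)$ serves as the Dirichlet Green function of the ball $B(x_0,R)$ with pole at $x_0$: by the radial symmetry of the Green function on the harmonic manifold $X$, $\wtilde{G}_R$ is non-negative on $B(x_0,R)$, vanishes on $S(x_0,R)$, and has distributional Laplacian on $B(x_0,R)$ equal to $-\delta_{x_0}$. The intermediate identity I aim to establish is the ball-Green representation
\begin{equation*}
f(x_0) \;=\; \int_{T^1_{x_0}X} f\bigl(\gamma_{x_0,v}(R)\bigr)\, d\theta_{x_0}(v) \;-\; \int_{B(x_0,R)} \wtilde{G}_R(y)\, d\mu_f(y).
\end{equation*}

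I would first establish this identity for the smooth non-positive subharmonic approximations $f_j := f*h_j$ from Lemma \ref{approx_id_lemma}. Applying Green's identity on the annulus $B(x_0,R) \setminus B(x_0,\varepsilon)$ to the pair $(f_j, \wtilde{G}_R)$ and letting $\varepsilon \to 0$, the singular contribution of $\wtilde{G}_R$ at the pole produces the value $f_j(x_0)$ (via \eqref{green_estimate} and continuity of $f_j$), yielding the displayed identity with $f$ and $\mu_f$ replaced by $f_j$ and $(\Delta f_j)\cdot dvol$. Passing to the limit $j \to \infty$, the value $f_j(x_0)$ and the spherical mean converge correctly by monotone convergence, exploiting $f_j \downarrow f$, $f_j \le 0$, and the integrability of $|f|$ over spheres from Remark \ref{non_decr_mvp_rmk}(2). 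The main obstacle is showing
\[
\int_{B(x_0,R)} \wtilde{G}_R(y)\, \Delta f_j(y)\, dvol(y) \;\longrightarrow\; \int_{B(x_0,R)} \wtilde{G}_R(y)\, d\mu_f(y) \qquad (j \to \infty),
\]
since $\wtilde{G}_R$ is merely continuous on $\overline{B(x_0,R)} \setminus \{x_0\}$ with a pole at $x_0$, hence not an admissible test function for Proposition \ref{riesz_meas_prop}. I would handle this by first extracting a uniform-in-$j$ estimate of the form $\int_{B(x_0,r)} \Delta f_j\, dvol \le -f(x_0)/G(r)$ for every $r>0$, derived from the ball-Green identity for the smooth $f_j$ applied on concentric balls of larger radius together with the decay $G(R') \to 0$. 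This makes the family $(\Delta f_j)\cdot dvol$ vaguely relatively compact and convergent to $\mu_f$ against $C_c$ test functions. Splitting at a small ball $B(x_0,\varepsilon)$, the outer piece is handled by smooth-cutoff approximations and the above vague convergence; the inner piece is controlled by the ball-Green identity for $f_j$ on $B(x_0,\varepsilon)$ itself, which bounds the near-pole contribution uniformly in $j$ and yields vanishing as $\varepsilon \to 0$.

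Having established the ball-Green identity for $f$, the final step is straightforward: let $R \to \infty$. The spherical mean term vanishes by hypothesis. Since $G(R) \downarrow 0$ as $R \to \infty$ by \eqref{green_estimate}, the functions $\wtilde{G}_R(y)\,\chi_{B(x_0,R)}(y)$ increase monotonically to $G(d(x_0,y))$, so the monotone convergence theorem applied to the positive Radon measure $\mu_f$ gives $\int_{B(x_0,R)} \wtilde{G}_R\, d\mu_f \to \int_X G(x_0, y)\, d\mu_f(y)$. The claim $f(x_0) = -\int_X G(x_0,y)\, d\mu_f(y)$ follows immediately. The essential technical step is the $j \to \infty$ passage in the presence of the Green singularity, which is where the uniform mass bound derived from the ball-Green identity plays the decisive role.
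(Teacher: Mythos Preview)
Your exhaustion-by-balls argument is sound in outline, but the paper takes a genuinely different and slicker route that sidesteps precisely the technical obstacle you identify. Rather than one approximate identity, the paper uses the \emph{difference} $h_j := h^1_j - h^2_j$ of two radial mollifier families: $h^1_j$ with support shrinking to the origin and $h^2_j$ with support escaping to infinity. Because $\int_X h_j\,dvol = 0$, Lemma~\ref{riesz_decom_lemma1} guarantees that $v_j := -G*h_j$ is radial $C^\infty_c$ with $\Delta v_j = h_j$. One then writes
\[
(f*h_j)(x) \;=\; \int_X f\,\Delta(\tau_x v_j)\,dvol \;=\; \int_X \tau_x v_j\,d\mu_f
\]
directly from the definition of the Riesz measure, with a compactly supported smooth test function and no singular kernel anywhere. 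The hypothesis on spherical means is used only to show $(f*h^2_j)(x)\to 0$, so that $(f*h_j)(x)\to f(x)$; on the other side, $G*h^1_j \uparrow G$ and $G*h^2_j \downarrow 0$, so Monotone Convergence against $\mu_f$ finishes the proof.

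Your ball-Green route is the classical one and would be the natural choice absent the radial convolution structure, but here it costs you the vague-convergence-plus-uniform-mass detour. One point you should also address: your uniform bound $\int_{B(x_0,r)}\Delta f_j\,dvol \le -f(x_0)/G(r)$ is vacuous at points where $f(x_0)=-\infty$, so the $j\to\infty$ passage for the Green term needs a separate argument there (e.g.\ showing directly that both sides of the target identity are $-\infty$). The paper's approach handles all $x$ at once without this case split.
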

\begin{proof}
Let $\{r_j\}_{j=1}^\infty \subset (0,1)$ such that it monotonically decreases to $0$. For each $j \in \N$, let
\begin{eqnarray*}
&&A^1_j := \{y \in X : r_{j+1} < d(o,y) < r_j\} \\
&&A^2_j := \{y \in X : e^{1/r_j} < d(o,y) < e^{1/r_{j+1}}\} \:.
\end{eqnarray*}
For each $j \in \N$ and $k=1,2$, let $h^k_j$ be a non-negative $C^\infty$ radial function with 
\begin{equation*}
Supp \:h^k_j \subset A^k_j\:, \text{ and } \int_X h^k_j \: dvol=1\:.
\end{equation*}
We write $h^k_j = u^k_j \circ d_o$, where $u^k_j$ is the corresponding function on $\R$. We choose and fix $x \in X$. Now since $f$ is subharmonic by Lemma \ref{approx_id_lemma},
\begin{equation} \label{riesz_decom_lemma2_eq1}
f(x) = \displaystyle\lim_{j \to \infty} \left(f* h^1_j\right)(x) = \displaystyle\lim_{j \to \infty} \int_X f(y) \left(\tau_x h^1_j\right)(y) \:dvol(y) \:.
\end{equation}
On the other hand by part $(1)$ of remark \ref{non_decr_mvp_rmk},
\begin{eqnarray*}
\left(f* h^2_j\right)(x) &=& \int_{e^{1/r_j}}^{e^{1/r_{j+1}}} u^2_j(r) A(r) \left(\int_{T^1_x X} f\left(\gamma_{x,v}(r)\right) d\theta_x(v)\right)\:dr \\
& \le & \int_{e^{1/r_j}}^{e^{1/r_{j+1}}} u^2_j(r) A(r) \left(\int_{T^1_x X} f\left(\gamma_{x,v}\left(e^{1/r_{j+1}}\right)\right) d\theta_x(v)\right)\:dr \\
&=& \int_{T^1_x X} f\left(\gamma_{x,v}\left(e^{1/r_{j+1}}\right)\right) d\theta_x(v) \:.
\end{eqnarray*}
Similarly one gets the lower bound to obtain
\begin{equation*}
\int_{T^1_x X} f\left(\gamma_{x,v}\left(e^{1/r_j}\right)\right) d\theta_x(v) \le \left(f* h^2_j\right)(x) \le \int_{T^1_x X} f\left(\gamma_{x,v}\left(e^{1/r_{j+1}}\right)\right) d\theta_x(v) \:.
\end{equation*}
Then in view of (\ref{riesz_decom_lemma2_eq}), we get
\begin{equation} \label{riesz-decom_lemma2_eq2}
\displaystyle\lim_{j \to \infty} \left(f* h^2_j\right)(x) = 0 \:.
\end{equation}
Now for $h_j := h^1_j - h^2_j$, by (\ref{riesz_decom_lemma2_eq1}) and (\ref{riesz-decom_lemma2_eq2}), one has
\begin{equation*}
f(x) = \displaystyle\lim_{j \to \infty} \left(f* h_j\right)(x) \:.
\end{equation*}
We now note that since the Green function is superharmonic on $X$, Lemma \ref{approx_id_lemma} implies that $G* h^1_j$ increases to $G$. Arguments similar to Lemma \ref{approx_id_lemma} also yield that $G*h^2_j$ decreases to $0$. Hence, $G*h_j$ increases to $G$. Now for each $j \in \N$, let $v_j:= - \left(G*h_j\right)$, then by Lemma \ref{riesz_decom_lemma1} we have $v_j \in C^\infty_c(X)$ is radial and $\Delta v_j = h_j$\:.

\medskip

Then by Lemma \ref{laplacian_commutes_translation} and Monotone Convergence Theorem, it follows that
\begin{eqnarray*}
f(x) & = & \displaystyle\lim_{j \to \infty} \int_X f(y) \left(\tau_x h_j\right)(y) \: dvol(y) \\
& = & \displaystyle\lim_{j \to \infty} \int_X f(y) \:\Delta (\tau_x v_j)(y) \: dvol(y) \\
& = & \displaystyle\lim_{j \to \infty}\int_X (\tau_x v_j)(y) \: d\mu_f(y) \\
& = & - \displaystyle\lim_{j \to \infty}\int_X \left(\tau_x \left(G*h_j\right)\right)(y) \: d\mu_f(y) \\
& = & - \int_X \left(\tau_x \: G\right)(y) \: d\mu_f(y) \\
& = & - \int_X G(x,y) \: d\mu_f(y) \:.
\end{eqnarray*}
\end{proof}
\begin{proof}[Proof of Theorem \ref{riesz_decom}]
Let $F_f$ be the least harmonic majorant of $f$. Consider $h := f-F_f$\:. Then $h \le 0$ is a subharmonic function  with the constant zero function as its least harmonic majorant. Then by Corollary \ref{harmonic_majorant_cor}, for all $x \in X$,
\begin{equation*}
\displaystyle\lim_{r \to +\infty} \int_{T^1_x X} h\left(\gamma_{x,v}(r)\right) \:d\theta_x(v) = 0 \:. 
\end{equation*}
Thus by Lemma \ref{riesz_decom_lemma2},
\begin{equation*}
f(x) = F_f(x) - \int_X G(x,y)\: d\mu_h(y) \:.
\end{equation*}
So it is enough to show that $\mu_h = \mu_f$. This is seen as follows. For all $\psi \in C^2_c(X)$, by Green's identity
\begin{eqnarray*}
\int_X \psi \:d\mu_h = \int_X h \Delta \psi \: dvol &=& \int_X f \Delta \psi \: dvol - \int_X F_f \:\Delta \psi \: dvol \\
& = &\int_X f \Delta \psi \: dvol - \int_X \psi \: \Delta F_f   \: dvol \\
& = &\int_X f \Delta \psi \: dvol \\
& = & \int_X \psi \: d\mu_f \:.
\end{eqnarray*}

\end{proof}
\section{Boundary behavior of Green potentials}
\subsection{Upper bound on the Hausdorff dimension} 
In this subsection we will work under the hypothesis of Theorem \ref{Green_thm}. We fix an origin $o \in X$. First, we state the following result regarding a condition imposed on the measure by the well-definedness of its Green potential. It is a simple consequence of the estimates of the Green function (\ref{green_estimate}) and hence the proof is omitted.

\begin{lemma} \label{necess_cond}
Let $\mu$ be a Radon measure on $X$. If $G[\mu]$ is well-defined then 
\begin{equation*}  
\int_X e^{-hd(o,y)} \:d\mu (y) < +\infty \:.
\end{equation*}
\end{lemma}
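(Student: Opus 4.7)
The plan is to prove Lemma \ref{necess_cond} by exploiting the fact that the Green function decays precisely like $e^{-hr}$ at infinity, so up to the behaviour near its pole the function $G(x_0, y)$ is comparable to the desired weight $e^{-hd(o,y)}$. So from the convergence of one integral I can deduce the convergence of the other by the standard split-and-compare argument.

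Concretely, let $x_0 \in X$ be a point at which $G[\mu](x_0) = \int_X G(x_0, y)\,d\mu(y) < +\infty$; such a point exists by the hypothesis that the Green potential of $\mu$ is well-defined. Set $R := d(o, x_0)$ and split
\begin{equation*}
\int_X e^{-hd(o,y)}\,d\mu(y) = \int_{B(x_0,1)} e^{-hd(o,y)}\,d\mu(y) + \int_{X \setminus B(x_0,1)} e^{-hd(o,y)}\,d\mu(y).
\end{equation*}
For the first integral I would use that $e^{-hd(o,y)} \le 1$, so the task reduces to showing $\mu(B(x_0,1)) < +\infty$. But the lower estimate in (\ref{green_estimate}) gives $G(x_0, y) = G(d(x_0,y)) \gtrsim 1$ uniformly on $B(x_0,1)$ (since $1/r^{n-2} \ge 1$ for $r \le 1$), whence
\begin{equation*}
\mu(B(x_0,1)) \lesssim \int_{B(x_0,1)} G(x_0,y)\,d\mu(y) \le G[\mu](x_0) < +\infty.
\end{equation*}

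For the second integral I would use the lower estimate $G(x_0, y) \gtrsim e^{-h d(x_0, y)}$ valid for $d(x_0, y) > 1$, combined with the triangle inequality $d(x_0, y) \le d(o,y) + R$, giving $e^{-hd(x_0,y)} \ge e^{-hR} e^{-h d(o,y)}$. Hence
\begin{equation*}
\int_{X \setminus B(x_0,1)} e^{-hd(o,y)}\,d\mu(y) \lesssim e^{hR} \int_{X \setminus B(x_0,1)} G(x_0, y)\,d\mu(y) \le e^{hR} G[\mu](x_0) < +\infty.
\end{equation*}
Summing the two contributions yields the conclusion. There is no real obstacle here: the proof is genuinely a bookkeeping exercise on top of (\ref{green_estimate}), which is presumably why the authors omitted it. The only mild care needed is separating the behaviour of $G$ near its pole from its behaviour at infinity, so that the pole-singularity controls the local mass of $\mu$ and the exponential decay controls the exterior mass.
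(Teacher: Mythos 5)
Your proof is correct and coincides with the paper's own argument: both decompose the comparison between $G[\mu](x_0)$ and $\int_X e^{-hd(o,y)}\,d\mu(y)$ over $B(x_0,1)$ and its complement, control the local mass via the pole singularity $G(r)\asymp r^{2-n}\ge 1$, and control the exterior mass via $G(r)\asymp e^{-hr}$ together with the triangle inequality $d(x_0,y)\le d(o,x_0)+d(o,y)$. Nothing to add.
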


Now for $\mu$ as in the statement of Theorem \ref{Green_thm} and for any $x \in X$, we write,
\begin{equation*}
G[\mu](x)= \int_X G(x,y)\: d\mu(y) = \int_{B(x,1)} G(x,y)\: d\mu(y) + \int_{X \setminus B(x,1)} G(x,y)\: d\mu(y) \:.
\end{equation*}
We define,
\begin{equation} \label{defn_u1_u2}
u_1(x) := \int_{B(x,1)} G(x,y)\: d\mu(y)\:, 
\text{ and }
u_2(x) := \int_{X\setminus B(x,1)} G(x,y) \:d\mu(y) \:. 
\end{equation}
We first study the boundary behavior of the more well-behaved $u_2$.
\begin{lemma} \label{u2_lemma}
Let $\beta \in [0,h]$ and let $u_2$ be as in (\ref{defn_u1_u2}). Then 
\begin{equation*}
dim_{\mathcal{H}}E_\beta(u_2) \le (h-\beta)/s\:,\text{ and }
\mathcal{H}^{(h-\beta)/s}\left(E^\infty_\beta (u_2)\right) = 0 \:.
\end{equation*}
\end{lemma}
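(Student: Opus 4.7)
The plan is to reduce $u_2$ to a Poisson integral on $\partial X$ and then invoke Theorem \ref{poisson_thm}. The key input is the Green function asymptotic $G(x,y) \asymp e^{-h d(x,y)}$ from (\ref{green_estimate}), which is clean precisely on the domain $X \setminus B(x,1)$ over which $u_2$ integrates.

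First I would fix $R > 1$ and decompose $u_2 = u_{2,1} + u_{2,2}$ according to whether $d(o,y) \le R$. For the piece near the origin, the bound $d(x,y) \ge t - R$ (valid when $x = \gamma_\xi(t)$ with $t > R+1$) yields $|u_{2,1}(\gamma_\xi(t))| \lesssim e^{-ht}\,\mu(B(o,R))$, so $e^{-\beta t} u_{2,1}(\gamma_\xi(t)) \to 0$ uniformly in $\xi$ since $\beta \le h$. Hence $u_{2,1}$ affects neither $E_\beta(u_2)$ nor $E^\infty_\beta(u_2)$, and it suffices to handle $u_{2,2}$.

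For $u_{2,2}$, I would push the weighted measure $G(o,\cdot)\,d\mu$ forward through the radial projection $\pi\colon X\setminus\{o\}\to\partial X$ (well-defined and continuous because $X$ is CAT(0)). The resulting positive Borel measure $\nu := \pi_*\bigl(G(o,\cdot)\,d\mu\bigr)$ on $\partial X$ has total mass $\nu(\partial X) = G[\mu](o) < +\infty$ by hypothesis. The goal is the pointwise comparison $u_{2,2}(\gamma_\xi(t)) \lesssim P[\nu](\gamma_\xi(t))$. Writing $d(x,y) = t + d(o,y) - 2(x|y)_o$ for $x = \gamma_\xi(t)$ splits $e^{-h d(x,y)} = e^{-ht}\,e^{-h d(o,y)}\,e^{2h(x|y)_o}$. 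The geometric input is that $(x|\,\cdot\,)_o$ is non-decreasing along any geodesic ray from $o$, which is immediate from the $1$-Lipschitz property of the distance function. Passing to the boundary yields $(x|y)_o \le \lim_{t'\to\infty}(x|\gamma_{\pi(y)}(t'))_o =: (x|\pi(y))_o$ by monotone convergence. Combining with $G(o,y) \asymp e^{-h d(o,y)}$ on $d(o,y) > R > 1$, and recognizing $e^{-ht} e^{2h(x|\pi(y))_o} = P(x,\pi(y))$, the desired comparison follows upon integration and transport through $\pi$.

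With this in hand, Theorem \ref{poisson_thm} applied to the finite measure $\nu$ on $\partial X$ gives $\dim_{\mathcal{H}} E_\beta(P[\nu]) \le (h-\beta)/s$ and $\mathcal{H}^{(h-\beta)/s}(E^\infty_\beta(P[\nu])) = 0$, and the inclusions $E_\beta(u_2) \subset E_\beta(P[\nu])$ and $E^\infty_\beta(u_2) \subset E^\infty_\beta(P[\nu])$ complete the proof. The hard part will be securing the pointwise comparison $u_{2,2} \lesssim P[\nu]$: the monotonicity of the Gromov product along radial rays must combine cleanly with its boundary limit so that no constants depending on $\xi$ or on the location of $y$ leak into the bound. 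Everything else reduces to the maximal-function machinery already developed for Theorem \ref{poisson_thm}.
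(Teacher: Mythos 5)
Your argument follows essentially the same route as the paper's: both rewrite $e^{-hd(x,y)}$ as $e^{-hB_y(x)}e^{-hd(o,y)}$ (your factorization $e^{-ht}e^{-hd(o,y)}e^{2h(x|y)_o}$ is the same identity), both invoke the monotonicity of the Gromov product along the radial geodesic from $o$ to pass from $y$ to its boundary projection, and both then recognize the resulting expression as a Poisson integral of a finite measure on $\partial X$, so that Theorem \ref{poisson_thm} applies. The paper peels off only the atom $\mu(\{o\})$ and pushes $e^{-hd(o,\cdot)}\,d\mu$ forward via the map $y\mapsto\eta_y$ on all of $X\setminus\{o\}$; you instead split off the whole annulus $\{d(o,y)\le R\}$, which is a mild and arguably cleaner variant. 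So this is not a genuinely different proof.

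There is, however, one real gap in the finiteness claim. You assert that $\nu(\partial X) = G[\mu](o) < +\infty$ ``by hypothesis,'' but the hypothesis is only that $G[\mu]$ is well-defined, i.e.\ finite at \emph{some} $x_0\in X$; a positive superharmonic function can very well be $+\infty$ at the fixed origin $o$, and indeed $\int_{B(o,1)\setminus\{o\}}G(o,y)\,d\mu(y)$ can diverge even when $G[\mu](x_0)<+\infty$ elsewhere. Since $\pi$ is only defined on $X\setminus\{o\}$ and you compare $u_{2,2}$ only to the portion $d(o,y)>R$, you should define $\nu$ as the pushforward of $G(o,\cdot)\,d\mu$ \emph{restricted to} $\{d(o,y)>R\}$ (or, as the paper does, of $e^{-hd(o,\cdot)}\,d\mu$ on $X\setminus\{o\}$); then $\nu(\partial X)\asymp\int_{\{d(o,y)>R\}}e^{-hd(o,y)}\,d\mu(y)<+\infty$, which is exactly what Lemma \ref{necess_cond} provides. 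With that correction the reduction to Theorem \ref{poisson_thm} goes through as you describe.
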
 
\begin{proof}
Let $x \in X$. By (\ref{green_estimate}), 
\begin{equation*}
u_2(x) \le C_1 \left\{e^{-hd(o,x)} \mu\left(\{o\}\right) + \tilde{u_2}(x)\right\}\:, 
\end{equation*}
where 
\begin{equation*}
\tilde{u_2}(x) := \int_{X \setminus \left(B(x,1)\cup \{o\}\right)} e^{-hd(x,y)} \:d\mu(y)\:.
\end{equation*}
Then 
\begin{equation*} 
\tilde{u_2}(x) = \int_{X \setminus \left(B(x,1)\cup \{o\}\right)} e^{-hB_y(x)} e^{-hd(o,y)}\: d\mu(y)\:,
\end{equation*}
where 
\begin{equation*}
B_y(x)= d(x,y)-d(o,y) \:.
\end{equation*}
Now rewriting the above in terms of the Gromov product, one has 
\begin{equation*}
B_y(x) = d(o,x) - 2{(x|y)}_o \:.
\end{equation*}
Combining this with the fact that the Gromov product is monotonically non-decreasing along geodesics, it follows that the function, $y \mapsto B_{y}(x)$ is monotonically non-increasing along geodesics. Hence (by denoting $\eta_y \in \partial X$ to be the end-point of the extended infinite geodesic ray that joins $o$ to $y$, for $y \in X$) we get,
\begin{eqnarray} \label{u2_lemma_eq1}
\tilde{u_2}(x) &\le & \int_{X \setminus \left(B(x,1)\cup \{o\}\right)} e^{-hB_{\eta_y}(x)} e^{-hd(o,y)} \: d\mu(y) \nonumber\\
& \le & \int_{X \setminus \{o\}}  e^{-hB_{\eta_y}(x)} e^{-hd(o,y)} \: d\mu(y) \:.
\end{eqnarray} 
Now by Lemma \ref{necess_cond} and the Riesz Representation Theorem for positive linear functionals on the space of continuous functions on $\partial X$, there exists a Radon measure $\tilde{\mu}$ on $\partial X$ such that for all $\varphi$ continuous on $\partial X$, we have
\begin{equation*}
\int_{X \setminus \{o\}} \varphi(\eta_y)\: e^{-hd(o,y)} d\mu(y) = \int_{\partial X} \varphi(\eta)\: d\tilde{\mu}(\eta) \:.
\end{equation*}
Then by the boundary continuity of the Busemann function, it follows that
\begin{equation} \label{u2_lemma_eq2}
\int_{X \setminus \{o\}} e^{-hB_{\eta_y}(x)} e^{-hd(o,y)} d\mu(y) = \int_{\partial X} e^{-hB_\eta(x)} d\tilde{\mu}(\eta) = P[\tilde{\mu}](x) \:.
\end{equation}
Thus combining (\ref{u2_lemma_eq1}) and (\ref{u2_lemma_eq2}), we obtain
\begin{equation*}
u_2(x) \le C_1 \left\{e^{-hd(o,x)} \mu\left(\{o\}\right) + P[\tilde{\mu}](x)\right\} \:.
\end{equation*}
Hence, $E_\beta(u_2) \subset E_\beta\left(P[\tilde{\mu}]\right)$ and $E^\infty_\beta(u_2) \subset E^\infty_\beta\left(P[\tilde{\mu}]\right)$. The result now follows from Theorem \ref{poisson_thm}.
\end{proof}
Now we shift our focus to $u_1$. But first we prove the following geometric estimate.
\begin{lemma} \label{shadow_upper}
Let $B$ be a ball in $X$ with center $z$ and radius $r \in (0,5]$. Then the diameter $d$ of $\mathcal{O}_o(B)$,  satisfies the following upper bound, for some constant $C_6=C_6(b,s)>0$ :
\begin{equation*}
d \le C_6\:r^{s/2b} \:e^{-sd(o,z)} \:,\text{ for } d(o,z) > 6\:.
\end{equation*}
\end{lemma}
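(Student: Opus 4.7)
The plan is to reduce the claim to a uniform lower bound on the Gromov product $(\xi|\eta)_o$ for $\xi, \eta \in \mathcal{O}_o(B)$. By the bi-Lipschitz equivalence in (\ref{metric_relation}) and the visual identity $\rho(\xi,\eta) = e^{-(\xi|\eta)_o}$, the sought $\rho_s$-diameter bound reduces to showing
\begin{equation*}
(\xi|\eta)_o \;\ge\; d(o, z) + \frac{1}{2b}\log\frac{1}{r} - C,\qquad \xi, \eta \in \mathcal{O}_o(B),
\end{equation*}
for some $C$ depending only on $b$ and $s$.

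Fix such $\xi, \eta$ and choose $p_\xi \in \gamma_\xi \cap B$, $p_\eta \in \gamma_\eta \cap B$. Set $t_\xi := d(o, p_\xi)$ and $t_\eta := d(o, p_\eta)$, so that $t_\xi, t_\eta \in [d(o, z) - r, d(o, z) + r]$ and $d(p_\xi, p_\eta) \le 2r$. Two complementary lower bounds on $(\xi|\eta)_o$ will be assembled. First, the monotonicity of the Gromov product along geodesics gives
\begin{equation*}
(\xi|\eta)_o \;\ge\; (p_\xi|p_\eta)_o \;=\; \frac{t_\xi + t_\eta - d(p_\xi, p_\eta)}{2} \;\ge\; d(o, z) - 2r,
\end{equation*}
which furnishes the exponential decay in $d(o,z)$. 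Second, the angle-at-infinity formula (\ref{infinite_riemannian_angle_bound}) combined with the CAT(0) Euclidean angle comparison applied to the Riemannian angle $\theta(\xi,\eta)$ at $o$ in the triangle $o, p_\xi, p_\eta$ yields
\begin{equation*}
e^{-b(\xi|\eta)_o} \;\le\; \sin\!\left(\frac{\theta(\xi,\eta)}{2}\right) \;\le\; \frac{d(p_\xi, p_\eta)}{2\sqrt{t_\xi t_\eta}} \;\le\; \frac{r}{\sqrt{t_\xi t_\eta}},
\end{equation*}
which encodes the $r$-sensitivity.

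The principal obstacle, and the crux of the proof, is to combine these two estimates so that both the factor $r^{s/(2b)}$ and the full exponent $e^{-s d(o, z)}$ appear in the final bound. The pinching hypothesis $-b^2 \le K_X \le 0$ is used essentially in both directions: the upper curvature bound $K_X \le 0$ is what legitimises the CAT(0) Euclidean angle estimate, while the lower bound $K_X \ge -b^2$ is what brings the constant $b$ into (\ref{infinite_riemannian_angle_bound}). The constraints $d(o, z) > 6$ and $r \le 5$ guarantee $t_\xi, t_\eta \asymp d(o, z)$, so that polynomial factors in $t_\xi, t_\eta$ can be absorbed into $C_6 = C_6(b, s)$; raising the resulting exponential bound on $e^{-b(\xi|\eta)_o}$ to the power $s/b$ and passing from $\rho^s$ back to $\rho_s$ via (\ref{metric_relation}) then produces the advertised estimate.
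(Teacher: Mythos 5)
Your proposal correctly assembles two true ingredients — the monotonicity bound $(\xi|\eta)_o \ge d(o,z) - 2r$ and the angle bound $e^{-b(\xi|\eta)_o} \le r/\sqrt{t_\xi t_\eta}$ — but they are both lower bounds on the \emph{same} quantity $(\xi|\eta)_o$, and there is no way to combine two lower bounds on a single number to obtain their sum. The first estimate yields $(\xi|\eta)_o \gtrsim d(o,z)$ with no dependence on $r$; the second, after raising to the power $s/b$, yields only $\rho_s \lesssim r^{s/b}\,d(o,z)^{-s/b}$, i.e.\ a polynomial (not exponential) decay in $d(o,z)$, because the CAT(0) Euclidean law of cosines cannot produce $\sinh$-type factors. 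Taking the maximum of the two lower bounds gives at most $\max\{d(o,z), b^{-1}\log(1/r)\} + O(1)$, whereas the lemma requires the \emph{sum} $d(o,z) + (2b)^{-1}\log(1/r) + O(1)$. You flag this very combination as ``the principal obstacle, and the crux of the proof,'' but the write-up never resolves it; as it stands, the argument proves at best $\rho_s(\xi,\eta) \lesssim r^{s/(2b)}\,e^{-sd(o,z)/2}$, which is strictly weaker than the claim.

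The missing idea is an \emph{additive decomposition} of $(\xi|\eta)_o$. In the paper's proof one fixes $\xi$ to be the boundary point obtained by extending $[o,z]$, takes an arbitrary $x \in B$, and lets $\eta$ be the extension of $[o,x]$. Because $z$ lies on the geodesic $[o,\xi]$ and $x$ lies on $[o,\eta]$, the exact identity
\begin{equation*}
(\xi|\eta)_o - (x|z)_o = (\eta|z)_x + (\xi|\eta)_z
\end{equation*}
holds. Dropping the non-negative term $(\eta|z)_x$ and using $(x|z)_o \ge d(o,z) - r$ produces the exponential factor $e^{-sd(o,z)}$ from one summand, while the remaining Gromov product $(\xi|\eta)_z$, subtended at the nearby point $z$, is bounded below via the \emph{hyperbolic} (not Euclidean) law of cosines in $\mathbb{H}^2(-b^2)$ together with the lower curvature comparison and the observation $(o|\eta)_z \le d(x,z) \le r$; this gives $e^{-2b(\xi|\eta)_z} \lesssim r$, hence the factor $r^{s/(2b)}$. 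Since the two factors now arise from two \emph{different} summands rather than from two lower bounds on one quantity, their contributions add. Finally, the $\rho_s$-diameter is controlled by two applications of the triangle inequality against the fixed reference direction $\xi$. Without a decomposition of this kind, the route you sketched does not reach the stated exponent.
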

\begin{proof}
Let $x \in B$ such that $o,x$ and $z$ are not collinear. We first note that by triangle inequality,
\begin{equation} \label{shadow_upper_eq1}
{(x|z)}_o = \frac{1}{2} \left(d(o,x)+d(o,z)-d(x,z)\right) \ge d(o,z) - d(x,z) \ge d(o,z) -r \:.
\end{equation}
Consider the geodesics that join $o$ to $x$ and the one that joins $o$ to $z$. Extend these geodesics. These extended geodesic rays will hit $\partial X$ at some points, say $\eta$ and $\xi$ respectively. Then
\begin{equation} \label{shadow_upper_eq2}
{(\xi|\eta)}_o - {(x|z)}_o = {(\eta|z)}_x +{(\xi|\eta)}_z \:. 
\end{equation}
Then (\ref{shadow_upper_eq1}) and (\ref{shadow_upper_eq2}) yield,
\begin{equation*}
{(\xi|\eta)}_o \ge  d(o,z) -r +{(\xi|\eta)}_z \:,
\end{equation*}
which in turn gives,
\begin{equation} \label{shadow_upper_eq3}
\rho_s(\xi,\eta) \le C_2\: e^{-s{(\xi|\eta)}_o} \le C_2\: e^{-sd(o,z)}\: e^{sr}\: e^{-s{(\xi|\eta)}_z} \:.
\end{equation}
Now as $r \in (0,5]$, it is enough to obtain an upper bound on $e^{-{(\xi|\eta)}_z}$\:. Let $\alpha$ (respectively  $\theta$) denote the Riemannian angle between $\eta$ and $\xi$ (respectively $\eta$ and $o$) subtended at $z$.   Then $\alpha + \theta = \pi$. Now we claim that 
\begin{equation} \label{shadow_upper_eq4}
\frac{e^{-2b{(o|\eta)}_z}- e^{-2bd(o,z)}}{1-e^{-2bd(o,z)}} \le \sin^2 \left(\frac{\theta}{2}\right) \:.
\end{equation}
The above claim is proved as follows. Let $\gamma$ be the geodesic ray starting from $z$ and hitting $\partial X$ at $\eta$. Now for any $t \in (0, +\infty)$, we consider the geodesic triangle $\triangle(z,o,\gamma(t))$. Then let $\theta_b(t)$ denote the angle corresponding to $\theta$ in the comparison triangle $\overline{\triangle}(z,o,\gamma(t))$ in $\mathbb{H}^2(-b^2)$. By the angle comparison theorem,
\begin{equation*}
\sin\left(\frac{\theta_b(t)}{2}\right) \le \sin\left(\frac{\theta}{2}\right)\:, \text{ for all } t \in (0,+\infty) \:. 
\end{equation*}
Now by the hyperbolic law of cosines,
\begin{equation*}
\sin^2 \left(\frac{\theta_b(t)}{2}\right) = \frac{\cosh bd(o,\gamma(t)) - \cosh b(d(o,z)-d(\gamma(t),z))}{2 \sinh bd(o,z) \sinh bd(\gamma(t),z)} \:.
\end{equation*}
Then 
\begin{eqnarray*}
\displaystyle\lim_{t \to +\infty} \frac{\cosh bd(o,\gamma(t))}{2 \sinh bd(o,z) \sinh bd(\gamma(t),z)} 
&=& \displaystyle\lim_{t \to +\infty} \frac{e^{bd(o,\gamma(t))}+e^{-bd(o,\gamma(t))}}{\left(e^{bd(o,z)}-e^{-bd(o,z)}\right)\left(e^{bd(\gamma(t),z)}- e^{-bd(\gamma(t),z)}\right)} \\
&=& \frac{e^{-2b{(o|\eta)}_z}}{1-e^{-2bd(o,z)}} \:,
\end{eqnarray*}
and 
\begin{eqnarray*}
\displaystyle\lim_{t \to +\infty} \frac{\cosh b(d(o,z)-d(\gamma(t),z))}{2 \sinh bd(o,z) \sinh bd(\gamma(t),z)} 
&=& \displaystyle\lim_{t \to +\infty} \frac{e^{b(d(o,z)-d(\gamma(t),z))}+e^{-b(d(o,z)-d(\gamma(t),z))}}{\left(e^{bd(o,z)}-e^{-bd(o,z)}\right)\left(e^{bd(\gamma(t),z)}- e^{-bd(\gamma(t),z)}\right)} \\
&=& \frac{e^{-2bd(o,z)}}{1-e^{-2bd(o,z)}} \:.
\end{eqnarray*}
Hence, the claim is established. Now as by the triangle inequality,
\begin{equation*}
{(o|\eta)}_z \le d(x,z) \le r \:,
\end{equation*}
plugging the above inequality in (\ref{shadow_upper_eq4}), we get that
\begin{equation} \label{shadow_upper_eq5}
\frac{e^{-2br}- e^{-2bd(o,z)}}{1-e^{-2bd(o,z)}} \le \sin^2 \left(\frac{\theta}{2}\right) \:.
\end{equation}
Then by (\ref{infinite_riemannian_angle_bound}) and (\ref{shadow_upper_eq5}), there exists $C(b)>0$ such that, 
\begin{equation*}
e^{-2b{(\xi|\eta)}_z} \le \sin^2\left(\frac{\alpha}{2}\right) = 1 - \sin^2\left(\frac{\theta}{2}\right) \le 1 - \frac{e^{-2br}- e^{-2bd(o,z)}}{1-e^{-2bd(o,z)}} = \frac{1-e^{-2br}}{1-e^{-2bd(o,z)}} \le C(b) r \:.
\end{equation*}
Hence, there exists $C(b,s)>0$ such that 
\begin{equation} \label{shadow_upper_eq6}
e^{-s{(\xi|\eta)}_z} \le C(b,s)\: r^{s/2b} \:.
\end{equation}
Then plugging (\ref{shadow_upper_eq6}) in (\ref{shadow_upper_eq3}) we get that,
\begin{equation*}
\rho_s(\xi,\eta) \le C(b,s)\:r^{s/2b}\: e^{-sd(o,z)} \:.
\end{equation*}
Now since $x$ was arbitrary, it follows that 
\begin{equation*}
d = \displaystyle\sup_{\eta_1,\eta_2 \in \mathcal{O}_o(B)} \rho_s(\eta_1,\eta_2) \le \displaystyle\sup_{\eta_1,\eta_2 \in \mathcal{O}_o(B)} (\rho_s(\eta_1,\xi) + \rho_s(\xi,\eta_2)\:) \le  2\:C(b,s)\:r^{s/2b}\: e^{-sd(o,z)} \:.
\end{equation*}
\end{proof}
Now we get back to estimating $u_1$. We introduce some new notation. Choose and fix $R>0$ and set for $L>0$,
\begin{equation*}
A_{\beta,R}(L) := \{x \in X \setminus B(o,R): e^{-\beta d(o,x)} u_1(x) > L\} \:.
\end{equation*}
\begin{lemma} \label{u1_lemma1}
Let $L >0,\:\beta \in [0,h-n+2)$ and $u_1$ defined as in (\ref{defn_u1_u2}). Then there exists a countable collection of balls $\{B(x_j,r_j)\}_{j=1}^\infty$ with $d(o,x_j) \ge R$ and $r_j \in (0,5]$ for all $j \in \N$, such that it covers $A_{\beta,R}(L)$ and moreover, one has
\begin{equation*}
\displaystyle\sum_{j=1}^\infty \left\{r_j\: e^{-d(o,x_j)}\right\}^{h-\beta} \le \frac{C(h,n,\beta)}{L}\int_X e^{-hd(o,x)} \: d\mu(x) \:,
\end{equation*}
for some positive constant $C(h,n,\beta)>0$.
\end{lemma}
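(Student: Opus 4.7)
The plan is to decompose the Green potential $u_1$ dyadically around each point $x$, extract a critical scale where $\mu$ concentrates mass, and then apply the Vitali $5$-covering lemma to produce the desired family. Since $\beta < h - n + 2$, fix $\varepsilon \in (0,1)$ with $n - 2 + \varepsilon < h - \beta$. For $x \in X$ and $k \ge 0$, set $A_k(x) = \{y \in X : 2^{-k-1} \le d(x,y) < 2^{-k}\}$; the estimate (\ref{green_estimate}) for the Green function near its pole gives $G(x,y) \lesssim 2^{k(n-2)}$ on $A_k(x)$, whence
\[
u_1(x) \;=\; \int_{B(x,1)} G(x,y)\, d\mu(y) \;\lesssim\; \sum_{k=0}^{\infty} 2^{k(n-2)}\, \mu\bigl(B(x,2^{-k})\bigr).
\]
If $x \in A_{\beta,R}(L)$, then $L\, e^{\beta d(o,x)} < u_1(x)$, and rewriting the sum as $\sum_k 2^{-k\varepsilon}\cdot 2^{k(n-2+\varepsilon)}\mu\bigl(B(x,2^{-k})\bigr)$ together with the convergence of $\sum_k 2^{-k\varepsilon}$ produces an integer $k_x \ge 0$ such that $\rho_x := 2^{-k_x} \in (0,1]$ satisfies
\[
L\, e^{\beta d(o,x)} \;\lesssim\; \rho_x^{-(n-2+\varepsilon)}\, \mu\bigl(B(x,\rho_x)\bigr).
\]
Multiplying both sides by $\rho_x^{h-\beta}\, e^{-h d(o,x)}$ and using $\rho_x \le 1$ together with $h - \beta - (n-2+\varepsilon) > 0$ yields the key pointwise estimate
\[
\bigl\{\rho_x\, e^{-d(o,x)}\bigr\}^{h-\beta} \;\lesssim\; \frac{1}{L}\, e^{-h\, d(o,x)}\, \mu\bigl(B(x,\rho_x)\bigr).
\]

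Apply the Vitali $5$-covering lemma to $\{B(x,\rho_x)\}_{x \in A_{\beta,R}(L)}$ (whose radii are bounded by $1$) to extract a countable disjoint subfamily $\{B(x_j,\rho_j)\}_{j \in \N}$ satisfying $A_{\beta,R}(L) \subset \bigcup_j B(x_j, 5\rho_j)$. Setting $r_j := 5\rho_j \in (0,5]$, the balls $\{B(x_j, r_j)\}$ form the required cover; the centers $x_j \in A_{\beta,R}(L)$ satisfy $d(o,x_j) \ge R$ automatically. Since each $\rho_j \le 1$, for every $y \in B(x_j,\rho_j)$ we have $e^{-h d(o,x_j)} \le e^{h}\, e^{-h d(o,y)}$; combining this with the pointwise estimate above and with the disjointness of $\{B(x_j,\rho_j)\}$ gives
\[
\sum_{j=1}^{\infty}\bigl\{r_j\, e^{-d(o,x_j)}\bigr\}^{h-\beta} \;\lesssim\; \frac{1}{L}\sum_{j=1}^{\infty} \int_{B(x_j,\rho_j)} e^{-h d(o,y)}\, d\mu(y) \;\le\; \frac{1}{L}\int_X e^{-h d(o,y)}\, d\mu(y),
\]
which is finite by Lemma~\ref{necess_cond}; this is the required estimate with constant depending only on $h$, $n$ and $\beta$ (through the choice of $\varepsilon$).

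The main obstacle is the scale-selection step: the hypothesis $\beta < h - n + 2$ is essential because the chosen $\varepsilon$ must simultaneously make $\sum_k 2^{-k\varepsilon}$ summable (forcing $\varepsilon > 0$) and keep the exponent $h-\beta-(n-2+\varepsilon)$ of $\rho_x$ non-negative, so that the bound $\rho_x \le 1$ can be invoked (forcing $\varepsilon < h - \beta - n + 2$). If $\beta$ attained the threshold $h - n + 2$, no such $\varepsilon$ would exist and the dyadic extraction would break down; this is exactly the mechanism through which the restriction $h > n - 2$ of Theorem~\ref{Green_thm} enters the argument.
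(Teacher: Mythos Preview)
Your proof is correct and follows essentially the same strategy as the paper: bound $u_1$ via $G(x,y)\lesssim d(x,y)^{2-n}$, extract for each $x\in A_{\beta,R}(L)$ a critical scale at which $\mu$ concentrates mass, apply Vitali, and sum over the disjoint family. The only difference is in the scale-extraction step. The paper uses the continuous layer-cake formula
\[
\int_{B(x,1)} d(x,y)^{2-n}\,d\mu(y)=\mu(B(x,1))+(n-2)\int_0^1 r^{1-n}\mu(B(x,r))\,dr
\]
and a contrapositive argument to obtain directly an $r_x\in(0,1]$ with $\mu(B(x,r_x))> C_7 L\,e^{\beta d(o,x)}\,r_x^{\,h-\beta}$, i.e.\ with the sharp exponent $h-\beta$ already present; you instead use a dyadic decomposition and a pigeonhole with the auxiliary weight $2^{-k\varepsilon}$, which delivers the weaker exponent $n-2+\varepsilon$ at the scale $\rho_x$ and then recovers $h-\beta$ by invoking $\rho_x\le 1$ and $h-\beta-(n-2+\varepsilon)>0$. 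The paper's route is slightly cleaner (no auxiliary $\varepsilon$), while yours makes the role of the threshold $\beta<h-n+2$ very transparent; neither gains anything substantive over the other.
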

\begin{proof}
Let 
\begin{equation*}
C_7 := \frac{1}{C_1} \left(1+\frac{2-n}{h-\beta}\right) \:,
\end{equation*}
where $C_1$ is the constant implicit in (\ref{green_estimate}). Note that our hypothesis on $\beta$ implies that $C_7>0$. Choose and fix $x \in X \setminus B(o,R)$. Next we claim that if 
\begin{equation*}
\mu\left(B(x,r)\right) \le C_7\: L\: e^{\beta d(o,x)}\: r^{h-\beta}\:, \text{ holds for all } 0 < r \le 1\:, \text{ then } u_1(x) \le L \: e^{\beta d(o,x)} \:.
\end{equation*}
The above claim is established as follows. By (\ref{green_estimate}), we have
\begin{equation*}
u_1(x) \le C_1 \int_{B(x,1)} {d(x,y)}^{2-n} d\mu(y) \:.
\end{equation*}
Now applying Fubini-Tonelli to the function $(y,r) \mapsto r^{1-n} \chi_{\{(y,r)\: : \: d(x,y)<r\}}$, it follows that
\begin{eqnarray*}
\int_{B(x,1)} {d(x,y)}^{2-n} d\mu(y) &=& \mu\left(B(x,1)\right) + (n-2) \int_{0}^1 r^{1-n} \mu\left(B(x,r)\right) dr \\
& \le & C_7 L\:  e^{\:\beta d(o,x)} \left\{ 1+(n-2)\int_{0}^1 r^{h+1-n-\beta}dr\right\} \\
&=& \frac{L\:  e^{\:\beta d(o,x)}}{C_1} \:.
\end{eqnarray*}
Hence the claim follows. Thus for $x \in A_{\beta,R}(L)$, the above claim ensures the existence of $r_x \in (0,1]$ such that
\begin{equation} \label{u1_lemma1_eq}
e^{-\beta d(o,x)} \mu\left(B(x,r_x)\right) > C_7 L\: r^{h-\beta}_x \:. 
\end{equation}
Then an application of Vitali 5-covering lemma yields a countable, disjoint collection of balls $B(x_j,r_{x_j})$ such that each ball satisfies the inequality (\ref{u1_lemma1_eq}) and for $r_j:=5r_{x_j}$, the balls $B(x_j,r_j)$ cover $A_{\beta,R}(L)$. Then by (\ref{u1_lemma1_eq}), we get
\begin{eqnarray*}
\displaystyle\sum_{j=1}^\infty {\left(r_j\:e^{-d(o,x_j)}\right)}^{h-\beta} &\le & \left(\frac{5^{h-\beta}}{C_7 L}\right) \displaystyle\sum_{j=1}^\infty e^{-hd(o,x_j)} \: \mu\left(B\left(x_j,r_{x_j}\right)\right) \\
&\le & \left(\frac{5^{h-\beta} \:e^h}{C_7 L}\right) \int_X e^{-hd(o,x)}\: d\mu(x) \:.
\end{eqnarray*}
\end{proof}
Now we do the estimates on the boundary. For $\xi \in \partial X$, define
\begin{equation*}
M_{\beta,R}[u_1](\xi):= \displaystyle\sup_{t>R} \: e^{-\beta t}u_1\left(\gamma_\xi(t)\right) \:.
\end{equation*}
\begin{lemma} \label{u1_lemma2}
Let $L>0,\:\beta \in [0,h-n+2)\:,b':=\max\{2b,1\}$ and $u_1$ be as defined in (\ref{defn_u1_u2}). For 
\begin{equation*}
0<\varepsilon < C_6 {\left(\frac{5}{e^6}\right)}^{s/b'} \text{ and } R \ge \log(5)+\left(\frac{b'}{s}\right) \log(C_6/\varepsilon)\:,
\end{equation*}
(where $C_6>0$ is as in the conclusion of Lemma \ref{shadow_upper}), we have,
\begin{equation*}
\mathcal{H}^{b'(h-\beta)/s}_\varepsilon \left(\left\{\xi \in \partial X : M_{\beta,R}[u_1](\xi) > L\right\}\right) \le \frac{C(h,n,\beta,b,s)}{L} \int_X e^{-h d(o,x)} d\mu(x) \:,
\end{equation*}
for some constant $C(h,n,\beta,b,s)>0$.
\end{lemma}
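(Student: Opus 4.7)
The plan is to follow the template of the proof of Theorem \ref{poisson_thm}: replace the Vitali cover of visual balls used there with the cover of $A_{\beta,R}(L)$ by interior balls supplied by Lemma \ref{u1_lemma1}, push this cover out to $\partial X$ via shadows, and then compare the $(b'(h-\beta)/s)$-Hausdorff sum of the shadows with the sum $\sum_j (r_j e^{-d(o,x_j)})^{h-\beta}$ which is already controlled by Lemma \ref{u1_lemma1}.

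First I would observe that if $\xi \in \partial X$ satisfies $M_{\beta,R}[u_1](\xi) > L$, then there exists $t_\xi > R$ with $e^{-\beta t_\xi}\,u_1(\gamma_\xi(t_\xi)) > L$, whence $\gamma_\xi(t_\xi) \in A_{\beta,R}(L)$. Applying Lemma \ref{u1_lemma1}, I would obtain a countable family $\{B(x_j, r_j)\}$ covering $A_{\beta,R}(L)$, with $d(o,x_j) \ge R$, $r_j \in (0,5]$, and
\begin{equation*}
\sum_{j} \bigl(r_j\, e^{-d(o,x_j)}\bigr)^{h-\beta} \le \frac{C(h,n,\beta)}{L} \int_X e^{-h d(o,y)} \, d\mu(y).
\end{equation*}
Since $\gamma_\xi(t_\xi) \in B(x_j, r_j)$ for some $j$, we have $\xi \in \mathcal{O}_o(B(x_j, r_j))$, so
\begin{equation*}
\bigl\{\xi \in \partial X : M_{\beta,R}[u_1](\xi) > L \bigr\} \subset \bigcup_{j} \mathcal{O}_o\bigl(B(x_j, r_j)\bigr).
\end{equation*}

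Next I would use the hypotheses on $\varepsilon$ and $R$ to verify two geometric conditions required to invoke Lemma \ref{shadow_upper}: that $d(o,x_j) > 6$, so the shadow-lemma estimate applies, and that the $\rho_s$-diameter $d_j$ of each shadow $\mathcal{O}_o(B(x_j, r_j))$ is at most $\varepsilon$, so the shadows form an admissible family for the definition of $\mathcal{H}^{\,\cdot}_{\varepsilon}$. Both follow by a direct calculation from $R \ge \log 5 + (b'/s) \log(C_6/\varepsilon)$ combined with $\varepsilon < C_6 (5/e^6)^{s/b'}$; these thresholds are precisely tailored so that $d_j \le C_6\, r_j^{s/(2b)}\, e^{-s\, d(o,x_j)} \le \varepsilon$ holds uniformly in $j$.

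Finally I would raise the shadow-diameter estimate to the power $b'(h-\beta)/s$. The definition $b' = \max\{2b, 1\}$ is what makes the exponents align: $b' \ge 1$ ensures $e^{-b'(h-\beta) d(o,x_j)} \le e^{-(h-\beta) d(o,x_j)}$, and $b'/(2b) \ge 1$ together with $r_j \le 5$ ensures $r_j^{b'(h-\beta)/(2b)} \le C(h,\beta,b)\, r_j^{h-\beta}$. Combining, one obtains
\begin{equation*}
d_j^{\,b'(h-\beta)/s} \le C(h,\beta,b,s) \bigl(r_j\, e^{-d(o,x_j)}\bigr)^{h-\beta},
\end{equation*}
and summing in $j$, then invoking Lemma \ref{u1_lemma1}, gives the desired bound on $\mathcal{H}^{b'(h-\beta)/s}_{\varepsilon}$.

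The main obstacle is the bookkeeping in the diameter-versus-cover estimate: one has to verify that the given hypotheses on $\varepsilon$ and $R$ simultaneously force $d(o,x_j) > 6$ and $d_j \le \varepsilon$ for \emph{every} radius $r_j \in (0,5]$ and every allowable $b > 0$. The case split at $b = 1/2$, where the definition of $b'$ switches branch, is where the constants are most delicate, and I expect this to be the step requiring the most care.
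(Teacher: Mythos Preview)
Your proposal is correct and follows essentially the same route as the paper: cover $A_{\beta,R}(L)$ by the balls from Lemma \ref{u1_lemma1}, push these to $\partial X$ via shadows, check the shadow diameters are at most $\varepsilon$ using the hypotheses on $R$ and $\varepsilon$, and then compare $d_j^{\,b'(h-\beta)/s}$ with $(r_j e^{-d(o,x_j)})^{h-\beta}$ using exactly the two inequalities $b'\ge 1$ and $b'\ge 2b$ that you isolate. One small presentational difference: for the $\varepsilon$-check the paper first coarsens the bound from Lemma \ref{shadow_upper} to $d_j \le C_6\,(r_j e^{-d(o,x_j)})^{s/b'}$ before plugging in $r_j\le 5$ and $d(o,x_j)\ge R$, which is precisely what makes the stated threshold $R\ge \log 5 + (b'/s)\log(C_6/\varepsilon)$ come out exactly; if you work directly with $r_j^{s/(2b)}e^{-sd(o,x_j)}$ the arithmetic in the $b<1/2$ branch does not quite match that threshold, so do the coarsening first.
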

\begin{proof}
Let $\xi \in \partial X$ such that $M_{\beta,R}[u_1](\xi) > L$. Then there 
exists $t_\xi >R$ such that 
\begin{equation*}
e^{-\beta t_\xi}\: u_1\left(\gamma_\xi(t_\xi)\right) > L \:.
\end{equation*}

Then $\gamma_\xi(t_\xi) \in A_{\beta,R}(L)$. Let $\{B\left(x_j,r_j\right)\}_{j=1}^\infty$ be the balls obtained in the conclusion of Lemma \ref{u1_lemma1}. Then 
\begin{equation*}
\left\{\xi \in \partial X : M_{\beta,R}[u_1](\xi) > L\right\} \subset \displaystyle\bigcup_{j=1}^\infty \mathcal{O}_o\left(B\left(x_j,r_j\right)\right) \:.
\end{equation*}
In fact, the diameters of $\mathcal{O}_o\left(B\left(x_j,r_j\right)\right)$ are uniformly bounded by $\varepsilon$. This is seen as follows. We note that the hypothesis on $\varepsilon$ and $R$ imply that $R > 6$ and thus $d(o,x_j) \ge R >6$. Moreover as $r_j \in (0,5]$, Lemma \ref{shadow_upper} is applicable and it yields
\begin{equation} \label{bound_diameter}
diameter\left(\mathcal{O}_o\left(B\left(x_j,r_j\right)\right)\right) \le C_6\: r^{s/b'}_j \: e^{-(s/b') d(o,x_j)} \le C_6 \:5^{s/b'} \:e^{-(s/b')R} \le \varepsilon
\end{equation}  
(the last inequality follows from an elementary computation involving the hypothesis on $R$). Hence by (\ref{bound_diameter}) and Lemma \ref{u1_lemma1}, we get for some constant $C(h,n,\beta,b,s)>0$,
\begin{eqnarray*}
\displaystyle\sum_{j=1}^\infty {\left(diameter\left(\mathcal{O}_o\left(B\left(x_j,r_j\right)\right)\right)\right)}^{b'(h-\beta)/s} &\le & C(h,n,\beta,b,s) \displaystyle\sum_{j=1}^\infty {\left(r_j\: e^{-d(o,x_j)}\right)}^{h-\beta} \\ 
& \le & \frac{C(h,n,\beta ,b,s)}{L} \int_X e^{-hd(o,x)}\: d\mu(x) \:.
\end{eqnarray*}
Hence the result follows.
\end{proof}
We now complete the proof of Theorem \ref{Green_thm}.
\begin{proof}[Proof of Theorem \ref{Green_thm}]
For $L>0$ and $\beta \in [0,h-n+2)$, we define $E^L_\beta(u_1)$ as in the proof of Theorem \ref{poisson_thm}. Then from Lemma \ref{u1_lemma2} and  Lemma \ref{necess_cond}, one has
\begin{equation*}
\mathcal{H}^{b'(h-\beta)/s}\left(E^L_\beta(u_1)\right) \le \frac{C(h,n,\beta,b,s)}{L} \int_X e^{-hd(o,x)} d\mu(x) < +\infty \:.
\end{equation*}
Hence it follows that
\begin{equation*}
\mathcal{H}^{b'(h-\beta)/s}\left(E^\infty_\beta(u_1)\right)=0 \text{ and } dim_{\mathcal{H}} E^{\frac{1}{m}}_\beta(u_1) \le b'(h-\beta)/s\:, \text{ for all } m \in \N\:.
\end{equation*}
Then by the countable stability of the Hausdorff dimension, we get
\begin{equation*}
dim_{\mathcal{H}} E_\beta(u_1) \le b'(h-\beta)/s \:.
\end{equation*}
Now as $E_\beta\left(G[\mu]\right) \subset E_\beta(u_1) \cup E_\beta(u_2)$,  one has from above and Lemma \ref{u2_lemma},
\begin{equation*}
dim_{\mathcal{H}} E_\beta \left(G[\mu]\right) \le \max \{dim_{\mathcal{H}} E_\beta(u_1),dim_{\mathcal{H}} E_\beta(u_2)\} \le b'(h-\beta)/s \:.
\end{equation*}
Next we note that as the Hausdorff outer measure is non-increasing in the dimension,
one has from Lemma \ref{u2_lemma} that
\begin{equation*}
\mathcal{H}^{b'(h-\beta)/s}\left(E^\infty_\beta(u_2)\right) \le \mathcal{H}^{(h-\beta)/s}\left(E^\infty_\beta(u_2)\right) =0 \:.
\end{equation*}
Now as $E^\infty_\beta\left(G[\mu]\right) \subset E^\infty_\beta(u_1) \cup E^\infty_\beta(u_2)$, it follows that
\begin{equation*}
{\mathcal{H}}^{b'(h-\beta)/s} \left(E^\infty_\beta \left(G[\mu]\right)\right) \le {\mathcal{H}}^{b'(h-\beta)/s} \left(E^\infty_\beta (u_1)\right) + {\mathcal{H}}^{b'(h-\beta)/s} \left(E^\infty_\beta (u_2)\right) =0\:.
\end{equation*}
\end{proof}
\begin{remark}
For the class of negatively curved Harmonic manifolds with sectional curvature $-b^2 \le K_X \le -1$, for some $b>1$, one has sharper upper bounds on the Hausdorff dimension of the exceptional sets for the Green potentials of Radon measures:
\begin{equation*}
\frac{b(h-\beta)}{s} \:\:\text{ in stead of }\:\: \frac{2b(h-\beta)}{s} \:.
\end{equation*}

\medskip

First we note that in this case $s=1$ and one can work with the natural metric $\rho$. Then with respect to $\rho$, one has the following estimate on the diameters of shadows of balls:

\medskip

Let $B$ be a ball in $X$ with center $z$ and radius $r \in (0,5]$. Then the diameter $d$ of $\mathcal{O}_o(B)$,  satisfies the following upper bound, for some constant $C(b)>0$ :
\begin{equation*}
d \le C(b) \:r^{1/b} \:e^{-(1/b)\:d(o,z)} \:,\text{ for } d(o,z) > 6\:.
\end{equation*}
The above claim is proved as follows. Let $x,y \in B$ be such that the three points: the origin $o,\:x$ and $y$ are not collinear. Let $\theta(x,y)$ be the Riemannian angle between $x$ and $y$, subtended at the origin $o$. Let $\theta_1(x,y)$ denote the corresponding comparison angle in $\mathbb H^2(-1)$. 
Then by the hyperbolic law of cosine, one has
\begin{equation*}
\sin^2\left(\frac{\theta_1(x,y)}{2}\right) = \frac{\cosh d(x,y)-\cosh \left(\left|d(o,x)-d(o,y)\right|\right)}{2 \sinh d(o,x)\sinh d(o,y)}\:.
\end{equation*}

\medskip

Then as $d(o,x)>1\:,\:d(o,x) \ge d(o,z) - r$ and similarly for $y$, we have
\begin{equation*}
\sinh d(o,x)\sinh d(o,y) \gtrsim e^{2d(o,z)} \:.
\end{equation*}
Also since, $d(x,y) \le 2r \le 10$, it follows that
\begin{equation*}
\cosh d(x,y)-\cosh \left(\left|d(o,x)-d(o,y)\right|\right) \le \cosh 2r -1 \lesssim r^2 \:.
\end{equation*}
Then using the angle comparison theorem, 
we get for some constant $C>0$, the following estimate:
\begin{equation} \label{shadow'_upper_eq1}
\sin\left(\frac{\theta(x,y)}{2}\right) \le \sin\left(\frac{\theta_1(x,y)}{2}\right)  \le C r e^{-\:d(o,z)} \:. 
\end{equation}
Now extend the geodesic rays joining $o$ to $x$ and the one joining $o$ to $y$. These extended geodesic rays hit $\partial X$ at two points, say $\eta_x$ and $\eta_y$ respectively. Then by (\ref{infinite_riemannian_angle_bound}), it follows that
\begin{equation} \label{shadow'_upper_eq2}
e^{-b{(\eta_x|\eta_y)}_o} \le \sin\left(\frac{\theta(\eta_x,\eta_y)}{2}\right) = \sin\left(\frac{\theta(x,y)}{2}\right) \:.
\end{equation} 
Thus by (\ref{shadow'_upper_eq1}) and (\ref{shadow'_upper_eq2}), it follows that there exists $C(b)>0$ such that 
 \begin{equation*}
 \rho(\eta_x,\eta_y) = e^{-{\left(\eta_x|\eta_y\right)}_o} = {\left(e^{-b{\left(\eta_x|\eta_y\right)}_o}\right)}^{1/b}  \le {\left(\sin\left(\frac{\theta(x,y)}{2}\right)\right)}^{1/b}  \le C(b)\: r^{1/b}\: e^{-(1/b)\:d(o,z)} \:. 
 \end{equation*}
The claim now follows by taking supremum over all such $x,y \in B$.

\medskip

Then using the above estimate and following the arguments as in Lemmas \ref{u1_lemma1} and \ref{u1_lemma2}, we get the corresponding Hausdorff dimensions (with respect to $\rho$) in Theorem \ref{Green_thm} to be $b(h-\beta)$\:.
\end{remark}

\subsection{Construction of Green potentials on realizable sets}
In this subsection we work under the hypothesis of Theorem \ref{green_sharp_thm}. Before proceeding with the proof of Theorem \ref{green_sharp_thm}, we first see the following sufficient conditions for a non-negative Borel measure to have a well-defined Green potential, which again is an immediate consequence of (\ref{green_estimate}).
\begin{lemma} \label{suffic_cond}
If $\mu$ is a non-negative Borel measure on $X$ satisfying
\begin{equation*} 
 Supp(\mu) \cap B(o,1) = \emptyset \text{ and }
\int_X e^{-hd(o,y)} \: d\mu(y) <+\infty \:,
\end{equation*} 
then $G[\mu]$ is well-defined.
\end{lemma}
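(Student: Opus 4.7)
The plan is to take the reference point to be the origin $o$ itself and directly verify that $G[\mu](o)$ is finite. The support hypothesis is precisely what is needed to stay away from the pole of $G(o, \cdot)$, so the only thing to check is the asymptotic integrability at infinity, which is delivered by the large-$r$ part of the Green function estimate \eqref{green_estimate}.

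More concretely, since $Supp(\mu) \cap B(o,1) = \emptyset$, the Green potential at $o$ reduces to
\begin{equation*}
G[\mu](o) = \int_X G(o,y)\, d\mu(y) = \int_{\{y\, :\, d(o,y) \ge 1\}} G\bigl(d(o,y)\bigr)\, d\mu(y) .
\end{equation*}
On $[1,+\infty)$ the estimate \eqref{green_estimate} gives $G(r) \le C_1 e^{-hr}$ (and $G$ is continuous and bounded on the sphere of radius $1$, so the inclusion of the boundary case $d(o,y)=1$ costs at most a constant). Hence
\begin{equation*}
G[\mu](o) \le C(h,n) \int_X e^{-h d(o,y)}\, d\mu(y) < +\infty
\end{equation*}
by the second hypothesis, and therefore $G[\mu]$ is well-defined in the sense stipulated just before the statement (namely, it is finite at at least one point).

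There is no genuine obstacle here: the lemma is essentially a bookkeeping statement that the two hypotheses neutralize the two regimes of the Green function, the singularity at the pole and the exponential decay at infinity. The only minor care needed is the remark about the boundary case $d(o,y) = 1$, which is handled by the continuity of $G$ away from $0$.
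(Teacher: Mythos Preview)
Your proof is correct and follows essentially the same approach as the paper: evaluate the potential at $o$, use the support hypothesis to discard the singular part of the Green function, and then apply the large-$r$ estimate \eqref{green_estimate} together with the integrability hypothesis. The paper's proof is essentially identical (it omits your remark about the boundary case $d(o,y)=1$, which is a harmless detail).
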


\begin{proof}[Proof of Theorem \ref{green_sharp_thm}]
Since $\mathcal{H}^{(h-\beta)/s}(E)=0$, we have for any $m \in \N$, a covering of $E$ by visual balls $\{\mathscr{B}^{(m,j)}_s\}_{j=1}^\infty$ such that
\begin{equation} \label{green_sharp_eq1}
\displaystyle\sum_{j=1}^\infty {\left(diameter\left(\mathscr{B}^{(m,j)}_s\right)\right)}^{(h-\beta)/s} < 2^{-k_m} \:,
\end{equation}
where $\{k_m\}_{m=1}^\infty$ is a strictly monotonically increasing sequence of positive integers such that
\begin{equation} \label{green_sharp_eq2}
2^{-\frac{k_1s}{h-\beta}} < 2 \min\left\{{\left(\frac{e^{-3(1+\delta)}}{C_4}\right)}^s \:,\frac{1}{C_2} \right\}\:,
\end{equation}
where $C_4>0$ is the positive constant obtained in the conclusion of Lemma \ref{shadow_lemma}.

\medskip

Let $\mathscr{B}^{(m,j)}_s = \mathscr{B}_s\left(\xi^{(m)}_j,r^{(m)}_j\right)$. Then an elementary computation using (\ref{green_sharp_eq1}) and (\ref{green_sharp_eq2}) yields that for all $m,j \in \N$,
\begin{equation*} 
r^{(m)}_j < \min\left\{{\left(\frac{e^{-3(1+\delta)}}{C_4}\right)}^s \:,\frac{1}{C_2} \right\}\:, \:\text{ and hence }\: \log\left(\frac{1}{C_4{\left(r^{(m)}_j\right)}^{1/s}}\right) > 3(1+\delta)\:.
\end{equation*}
Then setting $t^{(m)}_j=\log\left(\frac{1}{C_4{\left(r^{(m)}_j\right)}^{1/s}}\right)$, we get balls $B\left(\gamma_{\xi^{(m)}_j}\left(t^{(m)}_j\right),1+\delta\right)$ whose centers satisfy
\begin{equation} \label{green_sharp_eq3}
d\left(\gamma_{\xi^{(m)}_j}\left(t^{(m)}_j\right),o\right)= t^{(m)}_j >3(1+\delta)\:.
\end{equation}
Now applying Lemma \ref{shadow_lemma}, we get that for all $m,\: j \in \N$,
\begin{equation} \label{contained_in_shadow}
\mathscr{B}^{(m,j)}_s \subset \mathcal{O}_o\left(B\left(\gamma_{\xi^{(m)}_j}\left(t^{(m)}_j\right),1+\delta\right)\right)
\end{equation}
We now set
\begin{equation*}
f := \displaystyle\sum_{j,m} m\:\: e^{\beta t^{(m)}_j} \chi_{B\left(\gamma_{\xi^{(m)}_j}\left(t^{(m)}_j\right),\: 2(1+\delta)\right)} \:.
\end{equation*}
Then as $f$ is defined in terms of indicator functions of balls of radius $2(1+\delta)$ and (by (\ref{green_sharp_eq3}) ) the centers lie outside $\overline{B(o,3(1+\delta))}$, it follows that $f dvol$ defines a non-negative Borel measure on $X$ which is supported away from the unit ball $B(o,1)$. Moreover, there exists a positive constant $C(h,n,\beta,\delta,s)>0$ such that by the triangle inequality, the definition of $t^{(m)}_j$ and (\ref{green_sharp_eq1}) we have,
\begin{eqnarray*}
\int_X e^{-hd(o,x)} f(x) dvol(x) &\le & e^{2h(1+\delta)}\:\displaystyle\sum_{j,m} m\:\: e^{-(h-\beta) t^{(m)}_j} \: vol\left(B\left(\gamma_{\xi^{(m)}_j}\left(t^{(m)}_j\right),2(1+\delta)\right)\right) \\
& \le & C(h,n,\beta ,\delta,s) \displaystyle\sum_{j,m} m\:{\left(r^{(m)}_j\right)}^{(h-\beta)/s}  \\
& \le &  C(h,n,\beta ,\delta,s) \displaystyle\sum_{m=1}^\infty m\:2^{-k_m} \\
& < & +\infty \:. 
\end{eqnarray*}
Then by Lemma \ref{suffic_cond}, $G\left[f\:dvol\right]$ is well-defined. 

\medskip

Now we show that $E \subset E^\infty_\beta(u)$ where $u=G\left[f\:dvol\right]$. Let $\xi \in E$ and choose and fix an $m \in \N$. Now $\xi \in \mathscr{B}^{(m,j_m)}_s$ for some $j_m \in \N$. Then by (\ref{contained_in_shadow}), there exists $x_m \in B\left(\gamma_{\xi^{(m)}_{j_m}}\left(t^{(m)}_{j_m}\right),1+\delta\right)$ such that $x_m$ lies on the geodesic ray $\gamma_\xi$. Since 
\begin{equation*}
B(x_m,1+\delta) \subset B\left(\gamma_{\xi^{(m)}_{j_m}}\left(t^{(m)}_{j_m}\right),2(1+\delta)\right)\:,
\end{equation*}
one has using radiality of the Green function,
\begin{eqnarray*} 
u(x_m) & \ge & m \: e^{\beta t^{(m)}_{j_m}} \int_{B\left(\gamma_{\xi^{(m)}_{j_m}}\left(t^{(m)}_{j_m}\right)\:,\:2(1+\delta)\right)} G(x_m,y) \:dvol(y) \\
& \ge & m \: \left(\frac{e^{\beta d(o,x_m)}}{e^{\beta(1+\delta)}}\right) \int_{B(x_m,1+\delta)} G(x_m,y) \:dvol(y) \\
& \ge & m \: e^{\beta d(o,x_m)}\left(\frac{1}{e^{\beta(1+\delta)}} \int_{B(o,1+\delta)} G(o,y) \:dvol(y) \right) \:.
\end{eqnarray*}
Thus as $m \to +\infty$ we get that $e^{-\beta d(o,x_m)}u(x_m) \to +\infty$. By construction, all the points $x_m$ lie on the geodesic $\gamma_\xi$ and are contained in the balls  $B\left(\gamma_{\xi^{(m)}_{j_m}}\left(t^{(m)}_{j_m}\right),1+\delta\right)$. Then as by (\ref{green_sharp_eq1}),
\begin{equation*}
t^{(m)}_{j_m} \ge \log\left(\frac{2^{1/s}}{C_4}\right) + \left(\frac{k_m}{h-\beta}\right) \log 2 \to +\infty \text{ as } m \to +\infty \:,
\end{equation*}
it follows that $\xi \in E^\infty_\beta(u)$.
\end{proof}

\section{Proof of Theorem \ref{superh_thm}}
As $u$ is a positive superharmonic function, $-u$ is a negative subharmonic function with its least harmonic majorant, $F_{-u} \le 0$. Then by Theorem \ref{riesz_decom} and Lemma \ref{martinrep},
\begin{equation*}
u = P[\mu_1] + G[\mu_2] \:,
\end{equation*}
where $\mu_1$ is a finite, positive Borel measure on $\partial X$ and $\mu_2$ is a Radon measure on $X$. Then as
\begin{equation*}
E_\beta(u) \subset E_\beta\left(P[\mu_1]\right) \cup E_\beta\left(G[\mu_2]\right)\:, 
\end{equation*}
we have from Theorem \ref{poisson_thm}, and Theorem \ref{Green_thm} that
\begin{equation*}
dim_{\mathcal{H}}E_\beta(u) \le \max \left\{dim_{\mathcal{H}}E_\beta\left(P[\mu_1]\right),\:dim_{\mathcal{H}}E_\beta\left(G[\mu_2]\right)\right\} \le b'(h-\beta)/s \:.
\end{equation*}
Similarly one concludes that
\begin{equation*}
\mathcal{H}^{b'(h-\beta)/s}\left(E^\infty_\beta(u)\right) =0 \:.
\end{equation*}

\medskip

The converse part follows from Theorem \ref{green_sharp_thm}.

\medskip

This completes the proof of Theorem \ref{superh_thm}.

\section*{Acknowledgement}
The author would like to thank Prof. Kingshook Biswas for many illuminating discussions and suggestions. The author is supported by a research fellowship from Indian Statistical Institute.

\bibliographystyle{amsplain}

\end{document}